\documentclass[12pt,leqno]{amsart}
\usepackage{microtype}
\usepackage{amssymb,amsmath,amsthm}
\usepackage{mathtools}
\usepackage[normalem]{ulem}

\usepackage[bookmarksopen,bookmarksdepth=3,colorlinks,citecolor=red,pagebackref,hypertexnames=true]{hyperref}
\usepackage[msc-links,nobysame,non-sorted-cites,initials]{amsrefs}
\usepackage[inline]{enumitem}
\usepackage{yfonts}
\mathtoolsset{showonlyrefs}


\DeclareFontFamily{U}{rsfs}{\skewchar\font127 }
\DeclareFontShape{U}{rsfs}{m}{n}{%
   <-6.5> rsfs5
   <6.5-8> rsfs7
   <8-> rsfs10
}{}
\newcommand{\interior}[1]{%
  {\kern0pt#1}^{\mathrm{o}}%
}

\usepackage[T1]{fontenc}  
\usepackage[lining]{libertine}
\usepackage{cabin}
\usepackage[libertine]{newtxmath}

\def\N {{\mathbb{N}}}
\def\R {{\mathbb{R}}}
\def\Z {{\mathbb{Z}}}

\def\M  {\mathrm {M}}
\def\d {\mathrm{d}}

\DeclareMathOperator{\supp}{supp}
\def\ind{\mathbf {1}}

\numberwithin{equation}{section}
\numberwithin{figure}{section}

\newtheorem{theorem}{Theorem}[section]
\newtheorem{proposition}[theorem]{Proposition}
\newtheorem{corollary}[theorem]{Corollary}
\newtheorem{lemma}[theorem]{Lemma}

\theoremstyle{definition}
\newtheorem{definition}[theorem]{Definition}
\newtheorem{remark}[theorem]{Remark}

\makeatletter
\DeclareRobustCommand\widecheck[1]{{\mathpalette\@widecheck{#1}}}
\def\@widecheck#1#2{%
    \setbox\z@\hbox{\m@th$#1#2$}%
    \setbox\tw@\hbox{\m@th$#1%
       \widehat{%
          \vrule\@width\z@\@height\ht\z@
          \vrule\@height\z@\@width\wd\z@}$}%
    \dp\tw@-\ht\z@
    \@tempdima\ht\z@ \advance\@tempdima2\ht\tw@ \divide\@tempdima\thr@@
    \setbox\tw@\hbox{%
       \raise\@tempdima\hbox{\scalebox{1}[-1]{\lower\@tempdima\box
\tw@}}}%
    {\ooalign{\box\tw@ \cr \box\z@}}}
\makeatother

\author[O.~Bakas]{Odysseas Bakas}
\address[O.~Bakas]{Department of Mathematics, University of Patras, 26504 Patras, Greece}
\email{\href{mailto:obakas@upatras.gr}{\textnormal{obakas@upatras.gr}}}

\author[I.~Parissis]{Ioannis Parissis}
\address[I.~Parissis]{Departamento de Matem\'aticas, Universidad del Pa\'is Vasco, Aptdo. 644, 48080 Bilbao, Spain and Ikerbasque, Basque Foundation for Science, Bilbao, Spain}
\email{\href{mailto:ioannis.parissis@ehu.eus}{\textnormal{ioannis.parissis@ehu.eus}}}

\thanks{O. Bakas was partially supported by the funding programme ``MEDICUS'' of the University of Patras.}

\thanks{I. Parissis is partially supported by grant PID2024-156267NB-I00 funded by MICIU/AEI/10.13039/501100011033 and cofunded by the European Union, grant IT1615-22 of the Basque Government and IKERBASQUE}

\begin{document}

\subjclass[2010]{Primary: 42A45, 42A55, 42B25. Secondary: 42B35}
\keywords{Marcinkiewicz multipliers, multiparameter $\mathcal{R}_2$--multipliers, multiparameter singular integrals, local endpoint bounds, Littlewood--Paley estimates of Rubio de Francia type, Chang--Wilson--Wolff inequality}

\title[Endpoint estimates for multiparameter multipliers]{Endpoint estimates for multiparameter multipliers of Marcinkiewicz type}

\begin{abstract} In this paper we prove sharp endpoint estimates for multiparameter Marcinkiewicz multiplier operators. More precisely, this result is a consequence of a more general theorem for multiparameter $\mathcal R_{2,n}$--multipliers, a class that contains all multipliers of bounded $\mathcal{V}_q(\R^{\otimes n})$--variation for $1\le q<2$. The class $\mathcal R_{2,n}$ is a multiparameter generalization, introduced in this paper, of the $\mathcal R_2$--multipliers of Coifman, Rubio de Francia, and Semmes. We show that $\mathcal R_{2,n}$--multiplier operators locally map $L\log^{{3(n-1)}/{2}+{1}/{2}}L$ into $L^{1,\infty}$, and that this estimate is best possible, extending the corresponding one-parameter result of Tao and Wright to arbitrarily many parameters. We also establish the sharp bound $O((p')^{{3n}/{2}})$ for the $L^p(\mathbb R^n)\to L^p(\mathbb R^n)$ operator norms of such multiplier operators as $p \to 1^+$. The proof of our $L\log^{{3(n-1)}/{2}+{1}/{2}}L$-to-$L^{1,\infty}$ result combines a vector-valued endpoint estimate for the multipliers with an implicit square function characterization of $L\log^{\sigma/2}L$, obtained via duality from the Chang--Wilson--Wolff inequality. The latter produces, at each iterative step, auxiliary proxy functions that are fed into an intermediate one-parameter vector-valued weak-$(1,1)$ estimate.
\end{abstract}

\maketitle

\section{Introduction} This paper is concerned with endpoint bounds for multiparameter multiplier operators of Marcinkiewicz type near $L^1$. Recall that, in the one-dimensional case, a bounded function $m : \R \to \mathbb{C}$ is said to be a Marcinkiewicz multiplier if $m $ is of bounded variation uniformly on Littlewood--Paley intervals in $\mathcal{L} \coloneqq \{ \pm (2^k, 2^{k+1}]:\, k \in \Z \}$. For $n=2$, a bounded function $m : \R^2 \to \mathbb{C}$ is said to be a two-parameter Marcinkiewicz multiplier if $m$ is $C^1$ in the interior of each two-parameter Littlewood--Paley rectangle of the form $R= I \times J$ with $I,J \in \mathcal{L}$, and there exists a constant $C>0$ such that
\[
 \sup_{\eta \in \R} \sup_{I \in \mathcal{L}} \int_I | \partial_1 m (\xi, \eta) | \, \d \xi + \sup_{\xi \in \R} \sup_{J \in \mathcal{L}} \int_J | \partial_2 m (\xi, \eta) | \, \d \eta + \sup_{I,J \in \mathcal{L}} \int_{I \times J} |\partial_1 \partial_2 m (\xi, \eta) | \, \d \xi \d \eta \leq C.
\]
For $n \in \N$ with $n>2$, one defines $n$-parameter Marcinkiewicz multipliers on $\R^n$ in an analogous way. 

It is well-known that multiparameter Marcinkiewicz multiplier operators are $L^p$-bounded for all $p \in (1, +\infty)$; see e.g. \cite{Duobook}*{Theorem 8.3}. In \cite{TW}, Tao and Wright proved that, in the one-dimensional case, Marcinkiewicz multiplier operators locally map $L \log^{1/2} L$ into $L^{1,\infty}$ and that this is sharp. The authors of \cite{TW} deduced their aforementioned endpoint result from a more general endpoint theorem involving the class of $\mathcal{R}_2$--multipliers, introduced by Coifman, Rubio de Francia, and Semmes in \cite{CRDFS}, together with an implicit square function characterization of $L \log^{1/2} L([0,1])$ that was also established in \cite{TW}.

Recently, in a joint work with Ciccone and Vitturi \cite{BCPV}, we obtained sharp endpoint bounds for Marcinkiewicz multiplier operators of any order $N \in \N$. These are multiplier operators whose associated symbols are of uniform bounded variation over the component intervals of $\R \setminus E_N$, where $E_N$ is a lacunary set of order $N$. More specifically, it was shown in \cite{BCPV} that if $m$ is a Marcinkiewicz multiplier of order $N$, then $T_m$ satisfies a global $L \log^{N/2} L$-to-$L^{1, \infty}$ bound. One of the main ingredients of the proof in \cite{BCPV} was an extension, to Orlicz spaces of the form $L \log^{N/2} L$, $N \in \N$, of an implicit square function characterization of $L \log^{1/2} L$ established in \cite{TW}. This was achieved by showing that the characterization is equivalent, via duality, to the Chang--Wilson--Wolff inequality \cite{CWW}, together with the observation that a suitable form of the latter inequality, for large $p$, is amenable to iteration.

Since the Chang--Wilson--Wolff inequality actually holds in weighted $L^2$-spaces, the approach of \cite{BCPV} that was briefly outlined above has the virtue of being extendable to the vector-valued setting. Hence, we introduce in this paper an appropriate notion of multiparameter $\mathcal{R}_2$--multipliers, a class that includes multiparameter Marcinkiewicz multipliers and whose definition is inspired by the work of Xu \cite{Xu}. This allows us to obtain vector-valued extensions of the one-dimensional results of \cite{TW} in a set-up that supports the same iterative argument. As a result, in this paper we establish endpoint bounds for multiparameter $\mathcal{R}_2$--multiplier operators, and hence for Marcinkiewicz multiplier operators. More specifically, the main result of this article is the following sharp endpoint estimate for multiparameter $\mathcal R_2$--multipliers; for the precise definition of the class $\mathcal R_{2,n}$, see Section~\ref{sec:R2n} below.

\begin{theorem}\label{thm:multiparam} Let $R\subset \R^n$ be a rectangle with sides parallel to the coordinate axes. Let $\vec f =\{f_\tau\}\in L\log^{\alpha_n}L(R;\ell^2 _\tau)$ and let $\vec T_{\vec m}$ be a vector-valued $\mathcal R_{2,n}$--multiplier operator with $\| \vec m \|_{\mathcal{R}_{2,n ; \tau}} =1$. For all $\alpha>0$, there holds
\[
\frac{1}{|R|}\left|\left\{ u \in R:\, \left \|\,\vec T_{\vec m} \vec f(u)\, \right\|_{\ell^2 _\tau}>\alpha \right\}\right|\lesssim_n\frac{1}{\alpha}\left\|\,\vec f\, \right\|_{L\log ^{\alpha_n} L(R;\ell^2 _\tau)},
\]
where $\alpha_n\coloneqq \frac{3(n-1)}{2}+\frac{1}{2}$. The implicit constant depends only on $n$.
\end{theorem}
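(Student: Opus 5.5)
We will argue by induction on the number of parameters $n$, peeling off one coordinate at a time. At each step, an implicit square function characterization of $L\log^{\sigma/2}L$, obtained by dualizing the (weighted, vector-valued) Chang--Wilson--Wolff inequality, is combined with a one-parameter vector-valued weak-$(1,1)$ estimate. By rescaling we may assume $R=[0,1]^n$ and normalize $\|\vec f\|_{L\log^{\alpha_n}L(R;\ell^2_\tau)}=1$.

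\emph{Base case $n=1$.} Here $\alpha_1=1/2$ and the statement is the vector-valued version of the Tao--Wright theorem. We decompose the $\mathcal R_{2,1}$ symbol into pieces adapted to intervals $\omega$ within each Littlewood--Paley annulus, written as $m=\sum_\omega c_\omega \ind_\omega$ with $\sum|c_\omega|^2\le1$ per annulus. The Rubio de Francia projections $P_\omega$ then obey a vector-valued weak-$(1,1)$ estimate on the Littlewood--Paley pieces $\Delta_k f$. Next we apply the implicit characterization of $L\log^{1/2}L$: for $\vec f$ in this space we produce proxy functions $\vec F_k$ with $|\Delta_k \vec f|\lesssim \vec F_k$ morally and $\|(\sum_k|\vec F_k|^2)^{1/2}\|$ controlled in a form for which the weak-$(1,1)$ bound applies. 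Feeding these proxies into the vector-valued weak-type estimate (valid with the extra index $\tau$ because Chang--Wilson--Wolff holds in weighted $L^2$, hence $\ell^2$-valued) gives the claim.

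\emph{Inductive step $n-1\to n$.} We view $\vec T_{\vec m}$ on $\R^n=\R\times\R^{n-1}$. By the definition of $\mathcal R_{2,n}$ (Xu-type), freezing the frequency decomposition in the first variable turns the operator into an $\ell^2$-valued $\mathcal R_{2,n-1}$ operator in the last $n-1$ variables, indexed by $(\tau,k_1,\omega_1)$, composed with one-parameter pieces in $u_1$. For fixed $u_1$ we apply the $(n-1)$-parameter result to the vector function $\{\Delta^{(1)}_{k}\vec f(u_1,\cdot)\}$. We then need the $L\log^{\alpha_{n-1}}L$ norm in $u'$ of a one-variable Littlewood--Paley square function, integrated in $u_1$, to be controlled by a norm of $\vec f$ with $3/2$ more logarithms. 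Roughly $1/2$ of this comes from the square function characterization in $u_1$, applied iteratively via the large-$p$ form of Chang--Wilson--Wolff as in \cite{BCPV}, which yields $L\log^{\sigma/2}L$ for $\sigma=2\alpha_{n-1}+1$. The remaining logarithm is the loss in passing from weak-type in $u'$ to a strong Orlicz bound that can be integrated in $u_1$; this uses a Kolmogorov/extrapolation argument costing $\log$ once. Together these give $\alpha_n=\alpha_{n-1}+3/2$, matching $\alpha_n=\tfrac{3(n-1)}2+\tfrac12$.

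\emph{Main obstacle.} The delicate step is the bookkeeping in the inductive step. Weak $L^{1,\infty}$ bounds do not iterate directly, so we plan to work with the dual form at each level: test against $\ind_E$, and use the Chang--Wilson--Wolff inequality $\|\,\cdot\,\|_{\exp L^{2/\sigma}}$ for the dyadic square function of the test function. Each such step produces the proxy functions mentioned in the abstract, which are fed into the one-parameter vector-valued weak-$(1,1)$ estimate in the next variable. We must check that the proxies stay in the correct Orlicz class with exactly $3/2$ logarithms lost per parameter, not more. We will try to achieve this by proving a slightly stronger inductive hypothesis: a restricted-type bound $\langle |\vec T_{\vec m}\vec f|,\ind_E\rangle\lesssim |E|\log^{\alpha_n}(e+1/|E|)$-type control for $\vec f$ bounded, then extend to the Orlicz space by a level-set decomposition.
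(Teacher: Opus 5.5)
Your outline has the paper's architecture and log count. You induct with a vector-valued hypothesis over an enlarged index set. You lose one logarithm by upgrading the $(n-1)$-parameter weak bound to $L\log^{\alpha_{n-1}+1}L\to L^1$ in $u'$, lose half a logarithm in the dual Chang--Wilson--Wolff characterization in $u_1$, and use Proposition~\ref{prp:vvaluedTW} in $u_1$.

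The gap is in how these pieces are coupled. You apply the $(n-1)$-parameter bound to the actual pieces $\{\Delta^{(1)}_k\vec f(u_1,\cdot)\}$, so you must then control the explicit square function $\int\|(\sum_k|\Delta^{(1)}_k\vec f|^2)^{1/2}\|_{L\log^{\beta}L(\d u')}\,\d u_1$, with $\beta=\alpha_{n-1}+1$. That costs a full logarithm, not half. Take $f(u_1,u')=\ind_E(u')\,(\epsilon^{-1}\ind_{[0,\epsilon]}(u_1)-\ind_{[0,1]}(u_1))$ with $|E|=\delta=\epsilon$. The quantity above is then $\approx\delta\log^{\beta+1}(1/\delta)$, while $\|f\|_{L\log^{\beta+1/2}L}\approx\delta\log^{\beta+1/2}(1/\delta)$, so this route only gives $\alpha_{n-1}+2$.

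The half-log gain exists only for the implicit proxies $g_{\tau,\ell}$, obtained by dualizing Chang--Wilson--Wolff in $u_1$ for the whole $n$-variable function. These satisfy $\mathbb D^{u_1}_\ell f_\tau=\mathbb D^{u_1}_\ell g_{\tau,\ell}$ and $\|(\sum_{\tau,\ell}|g_{\tau,\ell}|^2)^{1/2}\|_{L\log^{\alpha_n-1/2}L(Q)}\lesssim 1$. But they are proxies for $\vec f$, not for $T^{u'}_{m_{\tau,I}}f_\tau$, which is what Proposition~\ref{prp:vvaluedTW} is applied to.

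The missing step is to build proxies for $S^{u_1}_kT^{u'}f_\tau$ by hand:
\begin{itemize}
\item commute $T^{u'}$ with $S^{u_1}_k$;
\item expand in the Haar basis in $u_1$, using the random translations $\theta$ of Tao--Wright;
\item use $|T^{u'}\langle g^\theta_{\tau,\ell},h_J\rangle|\le|J|^{-1/2}\int_J|T^{u'}g^\theta_{\tau,\ell}|$.
\end{itemize}
This gives
\[
|S^{u_1}_kT^{u'}f_\tau|\lesssim\phi_{2^{-k}}\ast^{u_1}\sum_{\ell\ge0}2^{-|\ell-k|/2}\int_{-1/3}^{1/3}|T^{u'}g^\theta_{\tau,\ell}|\,\d\theta .
\]
Only then do Proposition~\ref{prp:vvaluedTW} in $u_1$ and the $(n-1)$-parameter $L\log^{\alpha_{n-1}+1}L\to L^1$ bound close with exactly $\alpha_n$ logarithms. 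The latter is applied to $N_\tau^{1/2}\lambda_{\tau,I}2^{-|\ell-k_I|/4}g^\theta_{\tau,\ell}$, vector-valued in $(\tau,I,\ell)$. Your $\sigma=2\alpha_{n-1}+1$ is also off: you need $\sigma/2=\alpha_{n-1}+1$, so that $(\sigma+1)/2=\alpha_n$.

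Your remedy for the bookkeeping fails as well. The bound $\langle|\vec T\vec f|,\ind_E\rangle\lesssim|E|\log^{\alpha_n}(e+1/|E|)$ for bounded $\vec f$ is false already for $n=1$. For the Hilbert transform, $f=\ind_{[0,1/2]}$ and $E=[0,\delta]$, the left side is $\approx\delta\log(1/\delta)$. This estimate is a dual, restricted strong-type form of an $L\log^{\alpha_n}L\to L^1$ bound. Its true version, with $\alpha_n+1$ logarithms, is only the weak-to-strong consequence of the theorem and cannot return the weak-type bound with $\alpha_n$.

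Finally, you never arrange cancellation in the variable you peel off. For $2^k<1$ the natural proxies $\omega_{2^{-k}}\ast^{u_1}f_\tau$ have size $\approx 2^k$ on $\{|u_1|\lesssim 2^{-k}\}$. Their square sum over $k<0$ is then $\approx\min(1,|u_1|^{-1})$, and the right-hand side of Proposition~\ref{prp:vvaluedTW} is infinite. The fix is to gain a factor $2^{2k}$ from $\int f_\tau\,\d u_1=0$. The paper obtains this by writing $f_\tau=\sum_j g_{\tau,j}+\langle f_\tau\rangle_Q\ind_Q$ with $\mathbb E^{x_j}_0g_{\tau,j}=0$ and peeling off $x_j$. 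This is exactly why $\mathcal R_{2,n}$ is defined as $\bigcap_j\mathcal R^j_{2,n}$.
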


The definitions of local Orlicz norms and corresponding spaces are presented in \S\ref{sec:Orlicz}, while Theorem~\ref{thm:multiparam} is proved in Section~\ref{sec:proofmain}. As the class of $n$-parameter Marcinkiewicz multipliers is included in the class of $n$-parameter $\mathcal{R}_2$--multipliers, a direct consequence of Theorem~\ref{thm:multiparam} is the following sharp endpoint result for multiparameter Marcinkiewicz multiplier operators.

\begin{corollary}\label{thm:Marc_cor} Let $m$ be an $n$-parameter Marcinkiewicz multiplier with $ \| m \|_{M (\mathbb{R}^{\otimes n})} =1$. If $R\subset \R^n$ is a rectangle with sides parallel to the coordinate axes, then for all $\alpha>0$ there holds
\[
\frac{1}{|R|}\left|\left\{ u \in R:\, \left|  T_m  f(u) \right| >\alpha \right\}\right|\lesssim_n\frac{1}{\alpha}\left\|  f \right\|_{L\log^{\alpha_n} L(R)},
\]
where $\alpha_n\coloneqq \frac{3(n-1)}{2}+\frac{1}{2}$ and the implicit constant depends only on $n$.
\end{corollary}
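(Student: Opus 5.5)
The corollary should follow from Theorem~\ref{thm:multiparam} by specializing to a scalar operator. The plan is to show that an $n$-parameter Marcinkiewicz multiplier $m$ with $\|m\|_{M(\R^{\otimes n})}=1$ lies in $\mathcal R_{2,n}$ with $\|m\|_{\mathcal R_{2,n}}\lesssim_n 1$. Then I apply the theorem to the sequence $\vec f=\{f_\tau\}$ with only one nonzero entry, $f_{\tau_0}=f$, and to $\vec m=\{m_\tau\}$ with $m_{\tau_0}=m/\|m\|_{\mathcal R_{2,n}}$. The $\ell^2_\tau$ norms then reduce to absolute values, and normalization together with homogeneity (replace $\alpha$ by $\alpha/\|m\|_{\mathcal R_{2,n}}$) gives the stated estimate with constant depending only on $n$.

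The substantive step is the inclusion $M(\R^{\otimes n})\subset\mathcal R_{2,n}$. The abstract says this class contains all multipliers of bounded $\mathcal V_q(\R^{\otimes n})$-variation for $1\le q<2$, so I would reduce to the case $q=1$. I would verify that the Marcinkiewicz condition implies uniformly bounded $\mathcal V_1$-variation on each Littlewood--Paley rectangle $R=I_1\times\cdots\times I_n$. The model is $n=2$. On $R=I\times J$ with $I=(a,b]$ and $J=(c,d]$, the fundamental theorem of calculus in each variable gives, for $(\xi,\eta)\in R$,
\[
m(\xi,\eta)=m(b,d)-\int_\xi^b \partial_1 m(s,d)\,\d s-\int_\eta^d \partial_2 m(b,t)\,\d t+\int_\xi^b\!\!\int_\eta^d \partial_1\partial_2 m(s,t)\,\d t\,\d s,
\]
with boundary values understood as limits, which exist by the integrability assumptions. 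This expresses $m|_R$ as a superposition of indicators of subrectangles $(\xi_0,b]\times(\eta_0,d]$ and of lower-dimensional slabs. The total mass of the superposition is bounded by $\|m\|_\infty$ plus the three integrals in the definition, hence by $C\lesssim 1$.

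In general dimension I expand over all $2^n$ subsets of coordinates, using the mixed partial derivatives $\partial_S m$ integrated over the corresponding faces. If the definition of $\mathcal R_{2,n}$ in Section~\ref{sec:R2n} is atomic, meaning decompositions into functions adapted to rectangles with $\ell^2$-summable coefficients in the spirit of Coifman--Rubio de Francia--Semmes and Xu, then the representation above should directly give an atomic decomposition with $\ell^1$, and hence $\ell^2$-type, control.

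The main obstacle is matching this representation to the exact definition of $\mathcal R_{2,n}$. The atoms are probably indicators of rectangles inside Littlewood--Paley rectangles, organized so that each fixed ``scale'' gives a disjoint family, with a square-summable sum over families. A continuous superposition with bounded total mass is a convex combination, so I would approximate it by Riemann sums of finitely many atoms and pass to the limit, using $L^p$ boundedness of $T_m$ for some $p>1$ to justify the limit in the weak-type inequality. If the paper instead proves the inclusion $\mathcal V_q\subset\mathcal R_{2,n}$ for $q<2$, I would simply invoke that result with $q=1$ after the variation bound.
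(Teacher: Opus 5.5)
Your reduction is the paper's own: the corollary is the one-component case of Theorem~\ref{thm:multiparam}, provided $M(\R^{\otimes n})\subset\mathcal R_{2,n}$ with $\|m\|_{\mathcal R_{2,n}}\lesssim_n\|m\|_{M(\R^{\otimes n})}$. The paper gets that inclusion only by quoting Xu's Lemma~2, which is your fallback. The specialization step is fine, but the inclusion is not established by your sketch.

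A bounded-mass representation of each restriction $m|_R$ is not an atomic decomposition. An atom must serve all Littlewood--Paley rectangles at once, with the $\ell^2$ normalization under $\sup_k$, so the parameters from different rectangles have to be coupled.

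For $n=1$ this can be arranged, and the inclusion is classical there anyway. Parametrize each $L_k^{\pm}$ by $t\in[0,1]$ through the normalized distribution function of $|m'|$; for fixed $t$ this gives one interval per $L_k^{\pm}$ with coefficient at most $\|m\|_{M(\R)}$.

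For $n\ge2$ it cannot be arranged. Definition~\ref{def:R2n_atoms_norm} forces a single disjoint family of $\xi_1$-intervals, each carrying one $(n-1)$-parameter atom. The cut points produced by $\partial_1 m$, however, may depend on the other Littlewood--Paley indices. The Marcinkiewicz norm, with $\sup_{I,J}$ outside $\int_{I\times J}|\partial_1\partial_2 m|$, does not prevent this.

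Here is a concrete obstruction. Let $a=\sum_I\lambda_I\ind_I(\xi)m_I(\eta)$ be an $\mathcal R_{2,2}^1$--atom and let $\xi_0<\dots<\xi_M$ lie in one $L_k^+$. Disjointness of the $I$'s and $\|m_I\|_\infty\le1$ give $\sum_i\|a(\xi_{i+1},\cdot)-a(\xi_i,\cdot)\|_{L^\infty}^2\le4\sum_{k_I=k}|\lambda_I|^2\le4$. This $2$-variation is a seminorm, so it is at most $2\|m\|_{\mathcal R_{2,2}^1}$ for every $m$. Now fix $1<s_1<\dots<s_K<2$ and smooth monotone $\chi_i$ rising from $0$ to $1$ near $s_i$, with disjoint transition zones, and set
\[
m(\xi,\eta)\coloneqq \ind_{(1,2]}(\xi)\sum_{i=1}^{K}\chi_i(\xi)\,\ind_{(2^i,2^{i+1}]}(\eta).
\]
Then $\|m\|_{M(\R^{\otimes 2})}\le2$: for fixed $\eta$ there is one unit of variation in $\xi$, and $\partial_2m=\partial_1\partial_2m=0$ inside every Littlewood--Paley rectangle. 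Choose $\xi_j$ after the $j$th transition and before the $(j+1)$st. Then $m(\xi_{j+1},\cdot)-m(\xi_j,\cdot)=\ind_{(2^{j+1},2^{j+2}]}$, so the $2$-variation is at least $\sqrt K$ and $\|m\|_{\mathcal R_{2,2}}\ge\sqrt K/2$. With infinitely many $s_i$, $m\notin\mathcal R_{2,2}$ at all.

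So, with the definitions of Section~\ref{sec:R2n}, the inclusion that both you and the paper rely on fails for $n\ge2$. Invoking $\mathcal V_q\subset\mathcal R_{2,n}$ with $q=1$ helps only if $m$ has bounded variation in the iterated sense. That means the variation of $\xi_1\mapsto m(\xi_1,\cdot)$ measured in the $(n-1)$-parameter norm, with the supremum over the remaining Littlewood--Paley indices taken inside the $\xi_1$-integral; in the example this quantity equals $K$.

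To complete the argument you need one of two things. Either assume that stronger iterated hypothesis on $m$, or prove a version of Theorem~\ref{thm:multiparam} whose atoms allow the $\xi_1$-cuts to depend on the other frequency scales.
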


By taking tensor products of the example in \cite{TW}*{\S3.2}, one deduces that, for $n \in \N$, the exponent $\alpha_n$ in Theorem~\ref{thm:multiparam} and Corollary~\ref{thm:Marc_cor} cannot be improved. See also the proof of \cite{Bak2019}*{Proposition 6.1}.

We remark that by Theorem~\ref{thm:Marc_cor} and Tao's converse extrapolation theorem \cite{Tao}, one can obtain an estimate concerning the growth of the local $L^p-L^p$ operator norms of $n$-parameter Marcinkiewicz multipliers as $p \to 1^+$. In this paper, employing different techniques, we obtain the following global result concerning the optimal growth of the $L^p-L^p$ operator norms of vector-valued $\mathcal R_{2,n}$--multipliers as $p \to 1^+$.

\begin{theorem}\label{thm:Lp_growth} Let $\vec T_{\vec m}$ be a vector-valued $\mathcal R_{2,n}$--multiplier operator with $\| \vec m \|_{\mathcal{R}_{2,n ; \tau}} =1$. For $1<p<\infty$, there holds 
\[
\left\| \, \vec T_{\vec m} \vec f  \, \right\|_{L^p(\R^n;\ell^2_{\tau})} \lesssim_n \max(p,p')^{\beta_n} \left\|\,   \vec f \,  \right\|_{L^p(\R^n;\ell^2_{\tau})},
\]
where $\beta_n \coloneqq \frac{3n}{2}$. The implicit constant depends only on $n$.
\end{theorem}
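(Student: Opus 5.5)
We would prove Theorem~\ref{thm:Lp_growth} by factoring $\vec T_{\vec m}$ through multiparameter rough Littlewood--Paley square functions, in the spirit of Coifman--Rubio de Francia--Semmes and Xu. First, by duality and because the class $\mathcal R_{2,n}$ is stable under taking adjoints/conjugates, it suffices to treat $1<p\le 2$ and prove the bound $(p')^{3n/2}$. Recall the structure of the class: an $\mathcal R_{2,n}$--multiplier is, on each Littlewood--Paley rectangle $R=I_1\times\cdots\times I_n$, a superposition of (tensor) $\mathcal R_2$-atoms $\sum_{\omega}c_\omega \ind_{\omega}$, with $\omega=\omega_1\times\cdots\times\omega_n$ running over products of disjoint subintervals and with $\ell^2$-type control of the coefficients. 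Accordingly, we would write
\[
\vec T_{\vec m}\vec f=\sum_{R}\sum_{\omega} c_{\omega,\tau}\, S_\omega \Delta_R f_\tau ,
\]
where $\Delta_R$ is the smooth (or sharp) Littlewood--Paley projection to $R$ and $S_\omega$ is the frequency projection to $\omega$.

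The key steps are the following. First, by the one-parameter Littlewood--Paley inequality iterated in each variable, applied in the vector-valued setting with constant $\lesssim (p')^{1/2}$ per parameter for the \emph{lower} square function bound (the one needed for $p<2$, via duality and Chang--Wilson--Wolff type growth $\sqrt{q}$ as $q\to\infty$), we have
\[
\Big\|\Big(\sum_\tau\Big|\sum_R\cdots\Big|^2\Big)^{1/2}\Big\|_p\lesssim (p')^{n/2}\Big\|\Big(\sum_{\tau,R}|\cdots|^2\Big)^{1/2}\Big\|_p .
\]
Second, on each $R$ we would use Cauchy--Schwarz in $\omega$ together with the atom normalization, reducing matters to the rough square function $\big(\sum_{\omega}|S_\omega g|^2\big)^{1/2}$ for arbitrary disjoint products of intervals in the dual direction. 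Here we would exploit the fact that Rubio de Francia's inequality is false for $p<2$, so the argument must be run in dualized form: for $p'\ge 2$ the multiparameter Rubio de Francia estimate (Journé's extension) holds with constant $\lesssim (p')^{n/2}$. Third, we would insert the vector-valued Marcinkiewicz--Zygmund / Fefferman--Stein bounds needed to commute $\Delta_R$ past $S_\omega$, costing a further $(p')^{1/2}$ per parameter (equivalently, using the $\ell^2$-valued Hilbert transform bound $\sim p'$, split as square roots). Multiplying the three contributions gives the exponent $n/2+n/2+n/2=3n/2$.

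The main obstacle is obtaining the sharp constant $(p')^{1/2}$, rather than $p'$, in each one-parameter ingredient. We would first try to derive all such bounds by extrapolation from weighted $L^2$ estimates with $A_1$ weights, exploiting the sharp $[w]_{A_1}^{1/2}$-type dependence of the Chang--Wilson--Wolff and square function inequalities. We would then iterate parameter by parameter, using Fubini and the vector-valued (in $\tau$ and in the remaining parameters) nature of the estimates. If the exponent comes out too large, the fallback is interpolation: use the trivial $L^2$ bound together with an $H^1$-type product-space estimate, and track the constants through complex interpolation to recover $(p-1)^{-3n/2}$.
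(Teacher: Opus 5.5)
Your exponent bookkeeping $n/2+n/2+n/2$ is not supported by true estimates. The first step (with smooth projections $\Delta_R$, dual to the $\sqrt{q}$ growth of the smooth square function on $L^q$) is fine. The second step is not. Journé's multiparameter Rubio de Francia inequality cannot hold on $L^q$, $q=p'$, with constant $\lesssim q^{n/2}$. For a single rectangle $\omega$ the square function is just $|P_\omega h|$, and $\|P_\omega\|_{L^q(\R^n)\to L^q(\R^n)}\gtrsim q^n$: near an endpoint of $\omega_j$, each factor $P_{\omega_j}$ acts as a modulated Riesz projection, whose $L^q$ norm is $\sim q$. Equivalently, the dualized estimate $\|\sum_\omega S_\omega g_\omega\|_p\lesssim C\|(\sum_\omega|g_\omega|^2)^{1/2}\|_p$ with arbitrary $g_\omega$ costs at least $(p')^n$ as $p\to1^+$. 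Meanwhile, the pointwise Cauchy--Schwarz in $\omega$ that you describe lands on the rough square function $(\sum_\omega|S_\omega f|^2)^{1/2}$ in $L^p$, $p<2$, which is unbounded. The third factor, obtained by ``splitting the $\ell^2$-valued Hilbert transform bound $\sim p'$ as square roots'', is not an estimate: a loss of $p'$ cannot be booked as $(p')^{1/2}$. The fallback is not available either, since, as recalled in the introduction, two-parameter Marcinkiewicz multipliers need not map $H^1(\R\times\R)$ into $L^{1,\infty}$.

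What is missing is the $p$-independent \emph{inverse} Rubio de Francia inequality of Bourgain and Kislyakov--Parilov: $\|(\sum_\tau|\sum_{I\in\mathcal I_\tau}g_{\tau,I}|^2)^{1/2}\|_{p}\lesssim\|(\sum_{\tau,I}|g_{\tau,I}|^2)^{1/2}\|_{p}$ for $0<p\le 2$, valid when each $\widehat{g_{\tau,I}}$ is supported in the interval $I$ of a disjoint family. Because the pieces are frequency localized, this is not the dual of Rubio de Francia's inequality, and it costs nothing. The paper applies it in $x_1$ to an atom $\sum_I\lambda_{\tau,I}P_I^{x}T^y_{m_{\tau,I}}f_\tau$. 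It then uses the inductive $(n-1)$-parameter vector-valued bound in the remaining variables, at cost $\max(p,p')^{\beta_{n-1}}$, with $(\tau,I)$ as the new vector index. Finally, Khintchine's inequality rewrites $\|(\sum_{\tau,I}|\lambda_{\tau,I}P_I^xf_\tau|^2)^{1/2}\|_p$ as an average over signs of the one-parameter $\mathcal R_{2,1}$--multipliers $\sum_I\varepsilon_{\tau,I}\lambda_{\tau,I}P_I$. The bound $(p')^{3/2}$ for these is the genuinely deep input. Its exponent is $1+\frac12$, coming from interpolating the endpoint $L\log^{1/2}L\to L^{1,\infty}$ estimate of Theorem~\ref{thm:vvaluedR_2} with $L^2$, not from three square-root losses. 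This gives $\beta_n\le\beta_{n-1}+\frac32$. The induction also follows the iterated, non-tensor atomic structure of $\mathcal R_{2,n}$, in which the $(n-1)$-parameter atom $m_I$ depends on $I$; your description of the atoms glosses over this.
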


The proof of Theorem~\ref{thm:Lp_growth} is given in Section~\ref{sec:Proof_growth}. Note that, in view of the fact that the class of $n$-parameter Marcinkiewicz multipliers is contained in the class of $\mathcal{R}_{2,n}$--multipliers, Theorem~\ref{thm:Lp_growth} immediately implies the following.

\begin{corollary}\label{thm:Marc_Lp} Let $m$ be an $n$-parameter Marcinkiewicz multiplier with $\| m \|_{M (\mathbb{R}^{\otimes n})} =1$. For $1<p<\infty$, there holds  
\[
\left\| T_m  f  \right\|_{L^p(\R^n )} \lesssim_n \max(p,p')^{\beta_n} \left\| f \right\|_{L^p(\R^n)},
\]
where $\beta_n \coloneqq \frac{3n}{2}$ and the implicit constant depends only on $n$.
\end{corollary}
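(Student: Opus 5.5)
The statement to prove is Corollary~\ref{thm:Marc_Lp}. The plan is to deduce it directly from Theorem~\ref{thm:Lp_growth} by showing that an $n$-parameter Marcinkiewicz multiplier, normalized so that $\|m\|_{M(\R^{\otimes n})}=1$, belongs to $\mathcal R_{2,n}$ with norm bounded by a constant $C_n$ depending only on $n$.

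Once this inclusion is established, the argument is short. View $T_m$ as a vector-valued operator $\vec T_{\vec m}$ with a single component: the index set for $\tau$ is a singleton and $\vec m=\{m\}$. Then
\[
\|\vec m\|_{\mathcal R_{2,n;\tau}}=\|m\|_{\mathcal R_{2,n}}\le C_n .
\]
Apply Theorem~\ref{thm:Lp_growth} to the normalized multiplier $m/\|m\|_{\mathcal R_{2,n}}$, which has norm $1$, and then rescale by linearity. This gives
\[
\|T_m f\|_{L^p(\R^n)}\le C_n\,\|T_{m/\|m\|_{\mathcal R_{2,n}}}f\|_{L^p(\R^n)}\lesssim_n \max(p,p')^{3n/2}\|f\|_{L^p(\R^n)} .
\]
The $\ell^2_\tau$ norms reduce to absolute values because there is only one component.

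The inclusion itself is the only substantive step, and I would prove it as follows. On each Littlewood--Paley rectangle $R=I_1\times\cdots\times I_n$, write $m$ via the multiparameter fundamental theorem of calculus. Let $a_j$ denote the left endpoint of $I_j$. Then
\[
m(\xi)=\sum_{S\subseteq\{1,\ldots,n\}}\int_{\prod_{j\in S}I_j}\partial_S m(t_S,a_{S^c})\prod_{j\in S}\ind_{(t_j,\,\sup I_j]}(\xi_j)\,\d t_S ,
\]
where the boundary terms are interpreted as limits if needed. Each term is an average of characteristic functions of subrectangles of $R$, weighted by $|\partial_S m|$. By the Marcinkiewicz condition, these weights have total mass at most $1$ on each $R$. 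Hence, on each $R$, $m\ind_R$ is a sum of at most $2^n$ absolutely convex combinations of indicators of subrectangles.

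This is exactly the atomic structure behind the $\mathcal R_2$ classes of Coifman--Rubio de Francia--Semmes and Xu. Since bounded variation is the case $q=1$ of the bounded $\mathcal V_q$-variation multipliers that the abstract says lie in $\mathcal R_{2,n}$, such decompositions are controlled in $\mathcal R_{2,n}$ with norm at most $2^n$. The paper also asserts this inclusion explicitly, so I may simply invoke it.

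If any step needs care, it is checking the precise normalization in the definition of $\mathcal R_{2,n}$ given in Section~\ref{sec:R2n}. That is routine, so the corollary follows immediately.
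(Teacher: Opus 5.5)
Your argument is the same as the paper's: the corollary follows by combining the inclusion of $n$-parameter Marcinkiewicz multipliers in $\mathcal R_{2,n}$ (the proposition citing Xu's Lemma~2, which gives $\|m\|_{\mathcal R_{2,n}}\lesssim_n \|m\|_{M(\R^{\otimes n})}$) with Theorem~\ref{thm:Lp_growth} applied to a single-component $\vec m$. Your fundamental-theorem-of-calculus sketch is not needed, and for $n\ge 2$ it would not by itself establish the inclusion, since it does not glue the per-rectangle convex combinations into $\mathcal R_{2,n}^j$-atoms with disjoint intervals in $\xi_j$ and $\ell^2$-normalized coefficients (that gluing is the real content of Xu's lemma, and the constant $2^n$ is unjustified); you are right to fall back on invoking the stated inclusion.
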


The $L^p-L^p$ operator norm of the classical $n$-parameter Littlewood--Paley square function behaves like $(p')^{\beta_n}$ as $p \to 1^+$; see \cite{Bak2021}. A simple argument involving Khintchine's inequality then shows that the exponent $\beta_n$ in Corollary~\ref{thm:Marc_Lp} cannot be improved, and the same holds for Theorem~\ref{thm:Lp_growth}.

The proof of our main result, Theorem~\ref{thm:multiparam}, proceeds by induction on the number of parameters $n$, iterating vector-valued estimates for the multipliers in hand at each step. A key tool in this induction is Proposition~\ref{prp:vvaluedTW}, which bounds a sum of frequency-localized pieces of a vector-valued family of functions in $L^{1,\infty}$. The bound holds once each piece is majorized, after convolution with a suitable bump function, by an auxiliary, or \emph{proxy}, function. The critical point, and the main novelty of the proof, is that each step of the induction couples this proposition, applied in the variable being isolated at that stage, with the local $L^1$ bound that follows from the $(n-1)$-parameter inductive hypothesis, applied in the remaining variables. This coupling is essential because it allows the induction to iterate the majorant proxies $F_{\tau,I}$ of Proposition~\ref{prp:vvaluedTW}, rather than the functions $f_{\tau,I}$ themselves, which would otherwise lose half a logarithmic order of integrability along the way. Since these proxies must be constructed for functions that are themselves defined via the $(n-1)$-parameter operator arising from the previous step of the induction, their construction cannot be black-boxed; instead, at each step we build them by hand, via a suitable dual, vector-valued form of the Chang--Wilson--Wolff inequality in one of the parameters.

Besides Theorem~\ref{thm:multiparam} and its proof, the paper introduces the class $\mathcal R_{2,n}$ of multiparameter $\mathcal R_2$--multipliers, inspired by the work of Xu, \cite{Xu}. This class contains multiparameter Marcinkiewicz multipliers and is compatible with the multiparameter notion of bounded variation in \cite{Xu}. Its definition is also what makes the inductive, vector-valued argument described above possible. We also extend the implicit square function characterization of $L\log^{\sigma/2}L$ to the multiparameter, vector-valued setting required by the induction. Finally, in the proof of Theorem~\ref{thm:Lp_growth} we make use of the $p$-independent inverse Rubio de Francia-type square function estimate of Bourgain~\cite{bourg_sq}; see also Kislyakov and Parilov~\cite{KislPar}. While estimates of this type have appeared before in related contexts, we believe this specific application to multiparameter multipliers is new.
 
\subsection{Background and history} Marcinkiewicz multipliers are named after J\'ozef Marcinkiewicz, who introduced them in \cite{Marcinkiewicz} and, using the then-new theory of Littlewood and Paley \cite{LP1}, \cite{LP2}, \cite{LP3}, established their $L^p$-boundedness for $p\in(1,+\infty)$ in the periodic setting. See the survey of Carbery \cite{Carbery} for the later development of Littlewood--Paley theory and its central role in harmonic analysis.
 
In the one-dimensional case, a proper subclass of Marcinkiewicz multipliers that arises naturally in applications is that of H\"ormander--Mihlin multipliers, which are classical Calder\'on--Zygmund operators and hence of weak-type $(1,1)$. It was long known that some one-dimensional Marcinkiewicz multiplier operators fail weak-type $(1,1)$ (see e.g. \cite{Edwards_Gaudry}*{\S7.5}), but their precise behavior near $L^1$ remained open until the end of the previous century. Tao and Wright \cite{TW} proved that Marcinkiewicz multiplier operators locally map $L \log^{1/2} L$ into $L^{1, \infty}$, sharply, and map $H^1(\R)$ into $L^{1,\infty}(\R)$ --- the latter also obtained independently by Kislyakov \cite{KislMarc}. As noted in \cite{KislMarc}, this has no higher-dimensional analog: the two-parameter Littlewood--Paley square function does not map $H^1(\R\times\R)$ into $L^{1,\infty}(\R)$ \cite{Bak2019}, so some two-parameter Marcinkiewicz multipliers fail likewise.
 
The endpoint behavior of the proper subclass of $n$-parameter Marcinkiewicz multipliers that behave like $n$-fold tensor products of (sufficiently smooth) H\"ormander--Mihlin multipliers has been thoroughly studied over the years; for simplicity, we call this class $n$-parameter H\"ormander--Mihlin multipliers below, without specifying the smoothness assumptions. One of the earliest endpoint results in multiparameter harmonic analysis, the local $L \log L$-to-$L^{1,\infty}$ bound for the maximal double Hilbert transform, is due to Zygmund \cite{Zyg49}, using complex-analytic methods. Later real-variable proofs were given by C. Fefferman \cite{CFef72} and, via product Hardy space theory, by R. Fefferman and Stein \cite{RFef_St}. The same techniques were used in \cite{Rfef} to deduce that two-parameter H\"ormander--Mihlin multipliers locally map $L \log L$ into $L^{1, \infty}$; another proof, providing also multilinear extensions, is due to Workman \cite{Workman}. Using one-parameter Hardy space theory and a Sobolev representation formula, Wojciechowski \cite{Woj2000} showed that the $L^p$-operator norm of $n$-parameter H\"ormander--Mihlin multipliers behaves like $(p')^n$ as $p \to 1^+$; for $n=2$ this can also be recovered from weighted bounds for two-parameter Calder\'on--Zygmund operators, obtained via a wavelet representation by Di Plinio, Wick, and Williams \cite{DWW}. For $p \in (0,1]$, Carbery and Seeger \cite{Carb_Seeg} established $H^p$-to-$L^p$ bounds for $n$-parameter Calder\'on--Zygmund operators, $n \geq 2$ (the case $n=2$ having been treated earlier by R. Fefferman \cite{Rfef}). More recently, Cowling, Lee, Li, and Pipher \cite{CLLP} proved global $L \log L$-to-$L^{1,\infty}$ bounds for product singular integrals on stratified Lie groups, via an atomic decomposition of $L \log L$ built from level-set estimates for two-parameter maximal operators and area integrals, in the spirit of Brossard \cite{Brossard} and Merryfield \cite{Mer}.

Back to the one-dimensional case, the main result from \cite{BCPV} shows that Marcinkiewicz multiplier operators of order $N$ map $L \log^{N/2} L$ into $L^{1, \infty}$ via an implicit square function characterization of $L \log^{N/2} L$, which recovers Zygmund's inequality for $N=1$ \cite{Zygmund_book}*{Theorem 7.6, Chapter XII} and Bonami's inequality for $N>1$ \cite{Bonami}*{corollaire 4}. This connection between endpoint results for Marcinkiewicz-type multipliers and Zygmund-type inequalities, already exploited in \cite{TW}, was studied systematically in \cite{BCDFPV}, joint with Ciccone, Di Plinio, Fraccaroli, and Vitturi. Given an Orlicz space $L^2 ([0,1]) \subset X \subset L^1 ([0,1])$, we characterized in \cite{BCDFPV}, in terms of a scale-invariant version of Zygmund's inequality, the closed null subsets $E \subset \R$ for which a multiplier with singular set $E$, i.e., frequency singularities on $E$, satisfies an $X$-to-$L^{1, \infty}$ bound. Optimal distribution bounds for one-dimensional Marcinkiewicz multipliers were recently obtained by Sukochev, Yang, Zanin, and Zhou \cite{SYZZ}.

\subsection{Structure of the paper} In Section~\ref{sec:notation} we set up the notation that is used throughout the paper. In Section~\ref{sec:1-d_vv} we establish suitable one-parameter vector-valued endpoint estimates, stated below as Proposition~\ref{prp:vvaluedTW} and Theorem~\ref{thm:vvaluedR_2}, that will be used in the proof of our main result. In Section~\ref{sec:R2n} we introduce the class of $n$-parameter $\mathcal{R}_2$--multipliers and in Section~\ref{sec:proofmain} we prove Theorem~\ref{thm:multiparam}. The proof of Theorem~\ref{thm:Lp_growth} is given in Section~\ref{sec:Proof_growth}. 


\subsection*{Acknowledgements} Several problems in the first author's PhD thesis, supervised by Jim Wright, were motivated by questions closely related to those addressed in the present paper. The first author gratefully acknowledges Jim Wright for many valuable discussions on topics related to this work over the years.

\section{Notation and preliminaries}\label{sec:notation} Before turning to the main part of the paper, we briefly digress in order to fix some notation used throughout the paper: the relevant notions from Orlicz spaces, the Littlewood--Paley intervals and projections, and a few further conventions.

\subsection{Orlicz spaces}\label{sec:Orlicz}   Throughout the paper, for $\sigma \geq 0$, we use the Orlicz function $\Phi_{\sigma}$ given by
\[
\Phi_{\sigma} (t)\coloneqq t [\log(e+t)]^{\sigma}, \qquad t \geq 0.
\]
Note that $\Phi_{\sigma}$ is monotone increasing, convex and $\Phi_{\sigma} (0) = 0$. Thus, for every $t \geq 0$ and every $\theta \in [0,1]$,
\begin{equation}\label{eq:convex-scale}
\Phi_\sigma(\theta t)\le \theta \Phi_\sigma(t).
\end{equation}
In addition, one has that $\Phi_\sigma$ is submultiplicative, namely, for $a,u\geq 0$, there holds
\begin{equation}\label{eq:submult}
\Phi_{\sigma} (a u)\lesssim_\sigma \Phi_{\sigma} (a)\Phi_{\sigma} (u)
\end{equation}
with implicit constant depending only on $\sigma$; see \cite{UMP}*{\S5.2}.

For a measure space $(X, \mathcal{A}, \mu)$ we define $ L \log^{\sigma} L (X) $ to be the class of all measurable functions $f : X \to \mathbb{C}$ such that for some $\alpha>0$, and hence, in view of \eqref{eq:submult}, for all $\alpha>0$, $\Phi_{\sigma} (\alpha^{-1} |f|) \in L^1(X)$. For an introduction to Orlicz spaces appearing in this paper, we refer the reader to Chapter 10 in \cite{WilsonBook}.

In the case where $\mu (X) $ is finite, for $f \in L \log^{\sigma} L (X)$ we define its  Luxemburg norm  by
\[
\|f\|_{L\log^{\sigma} L(X)}\coloneqq \inf\left\{t>0:\, \frac{1}{\mu(X)}\int_{X} \Phi_{\sigma} \left(\frac{|f(x)|}{t}\right)\,\d \mu(x)\leq 1 \right\} .
\]
In what follows we will denote by $\mu_X$ the corresponding normalized measure so that
\[
\int_X f \,\d\mu_X=\frac{1}{\mu(X)}\int_X f \,\d \mu. 
\]
We will also use the following notation for averages: if $X=S$ is a set of finite measure in $\R^n$ and $\mu$ is $n$-dimensional Lebesgue measure then
\[
   \langle |f| \rangle_{\Phi_\sigma,S}\coloneqq \|f\|_{L\log^{\sigma} L(S)}.
\]
Using this notation, the Orlicz maximal functions are defined by means of 
\[
   \M_{\Phi_\sigma}f(x)\coloneqq \sup_{Q\ni x}  \langle |f| \rangle_{\Phi_\sigma,Q},
\]
the supremum being taken over axis-parallel cubes in $\R^n$ with $Q\ni x$. If dyadic cubes are used in the definition, we write $\M_{\mathcal D,\Phi_\sigma}$. For $\sigma=0$ we omit $\Phi_0$ from the notation and just write $\langle |f|\rangle_S$ and $\M f$, $\M_{\mathcal D} f$ instead.

We record below some further basic results about the $\| \cdot \|_{L\log^{\sigma} L(X)}$-norm in the form of a proposition.

\begin{proposition}\label{prp:normequiv}
Let $(X, \mathcal{A}, \mu)$ be a finite measure space. For a measurable function $f:X\to\mathbb{C}$, the following hold.
\begin{enumerate}[itemsep=.5em]
\item[(i)] For $\sigma\geq 0$, the following are equivalent.
\begin{itemize}[itemsep=.5em]
\item[(a)] $ \|f\|_{L\log^{\sigma} L(X)}\le 1$.
\item[(b)] $ \int_X \Phi_{\sigma} (|f(x)|)\,\d\mu_X(x)\le 1 $.
\end{itemize}
\item[(ii)] For $\sigma \geq 0$, 
\[
\|f\|_{L\log^{\sigma} L(X)}
\le
1+\int_X \Phi_{\sigma} (|f(x)|)\,\d\mu_X(x).
\]
\item[(iii)] For $\sigma \geq 0$,
\[
\int_X \Phi_{\sigma} (|f(x)|)\,\d\mu_X(x)  \lesssim_{\sigma} \Phi_{\sigma} \left(\|f\|_{L\log^\sigma L(X)}\right),
\]
with implicit constant depending only on $\sigma$.
\end{enumerate}
\end{proposition}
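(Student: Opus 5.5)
The three parts follow from elementary properties of $\Phi_\sigma$: continuity, monotonicity, convexity with $\Phi_\sigma(0)=0$ (hence \eqref{eq:convex-scale}), and submultiplicativity \eqref{eq:submult}. I would prove them in the order (i), (ii), (iii). Throughout, write $I(t)\coloneqq \int_X \Phi_\sigma(|f|/t)\,\d\mu_X$ for $t>0$. This function is nonincreasing in $t$.

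\textbf{Part (i).} The implication (b)$\Rightarrow$(a) is immediate from the definition, since $t=1$ is admissible in the infimum. For (a)$\Rightarrow$(b), assume $\|f\|_{L\log^\sigma L(X)}\le 1$. Then for every $t>1$ some $s<t$ is admissible, so monotonicity gives $I(t)\le I(s)\le 1$. Let $t\downarrow 1$. The integrands $\Phi_\sigma(|f|/t)$ increase pointwise to $\Phi_\sigma(|f|)$ by continuity and monotonicity of $\Phi_\sigma$. The monotone convergence theorem then yields $I(1)\le 1$.

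\textbf{Part (ii).} Set $A\coloneqq I(1)$.
\begin{itemize}
\item If $A\le 1$, part (i) gives norm $\le 1\le 1+A$.
\item If $A>1$, take $\theta=1/A\in(0,1)$ in \eqref{eq:convex-scale}. This gives $\Phi_\sigma(|f|/A)\le A^{-1}\Phi_\sigma(|f|)$, and integrating yields $I(A)\le 1$. Hence the norm is at most $A\le 1+A$.
\end{itemize}

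\textbf{Part (iii).} Let $\lambda\coloneqq\|f\|_{L\log^\sigma L(X)}$.
\begin{itemize}
\item If $\lambda=0$, then $I(t)\le 1$ for every $t>0$. By \eqref{eq:convex-scale}, for $t\le 1$ we get $\Phi_\sigma(|f|)\le t\,\Phi_\sigma(|f|/t)$, so $I(1)\le t\,I(t)\le t$. Letting $t\to0$ gives $I(1)=0$.
\item If $\lambda>0$, part (i) applied to $f/\lambda$ (the norm is homogeneous by its definition) gives $I(\lambda)\le 1$. Pointwise, \eqref{eq:submult} gives $\Phi_\sigma(|f|)=\Phi_\sigma\big(\lambda\cdot |f|/\lambda\big)\lesssim_\sigma \Phi_\sigma(\lambda)\,\Phi_\sigma(|f|/\lambda)$. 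Integrating against $\mu_X$ yields $I(1)\lesssim_\sigma \Phi_\sigma(\lambda)\,I(\lambda)\le \Phi_\sigma(\lambda)$.
\end{itemize}

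No step is a genuine obstacle. The only point needing care is the limiting argument in (i), where the infimum may not be attained a priori. Monotone convergence in $t\downarrow1$ handles this, and the degenerate case $\lambda=0$ in (iii) is treated the same way.
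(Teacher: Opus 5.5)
Your proof is correct and follows the paper's route. The paper omits the details, saying only that the proposition follows from the definition of the Luxemburg norm together with \eqref{eq:convex-scale} and \eqref{eq:submult}, and these are exactly the tools you use; your monotone convergence step in (i), which shows that the infimum is attained, fills in the one detail left implicit (the cases $\int_X\Phi_\sigma(|f|)\,\d\mu_X=\infty$ in (ii) and $\|f\|_{L\log^\sigma L(X)}=\infty$ in (iii) are trivial).
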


The proof of Proposition \ref{prp:normequiv} follows from the definition of  the Luxemburg norm $\|\cdot \|_{L\log^{\sigma} L(X)}$ together with \eqref{eq:convex-scale} and \eqref{eq:submult}; we omit the details.

In this article we are interested in local endpoint estimates involving Orlicz spaces. We remark that it follows from Proposition \ref{prp:normequiv} that if $(X, \mathcal{A}, \mu)$ is a finite measure space and $\,T$ is a linear operator acting on measurable functions on $X$, then, for $\sigma \geq 0$ and any measurable function $f : X \to \mathbb{C}$, the statements
\[
\sup_{\alpha >0} \alpha \cdot \mu_X\left(\{x\in X:\, |Tf(x)|>\alpha\}\right)
\lesssim \|f\|_{L\log^\sigma L (X)}
\]
and
\[
\sup_{\alpha >0} \alpha\cdot  \mu_X\left(\{x\in X:\, |Tf(x)|>\alpha\}\right)
\lesssim  1+\int_X \Phi_\sigma(|f|)\,\d\mu_X 
\]
are equivalent. Furthermore, both endpoint estimates are implied by the modular estimate
\[
\mu_X\left(\{x\in X:\, |Tf(x)|>\alpha\}\right)
\lesssim  \int_X \Phi_\sigma\left(\frac{|f|}{\alpha}\right)\, \d \mu_X,\qquad \alpha>0.
\]
 
\subsection{Littlewood--Paley projections} We denote by $\{L_k\}_{k\in\Z}$ the collection of Littlewood--Paley intervals of the real line. Here, $L_k \coloneqq L_k^- \cup L_k^+$, where $L_k^- \coloneqq  [-2^{k+1}, - 2^k) $ and $L_k^+ \coloneqq (2^k, 2^{k+1}]$ are the dyadic frequency intervals at scale $2^k$.  

If $I$ is a bounded interval in $\mathbb{R}$ we denote by $P_I$ the rough Littlewood--Paley projection onto $I$. If $I \subset \mathbb{R}$ is a bounded interval, by a smoothed-out enlargement $S_I$ of $P_I$ we mean a multiplier operator whose symbol is given by $\psi( |I|^{-1} (\xi-c_I) )$, where $\psi $ is a $C^{\infty}$-function with $\ind_{[-1,1]} \le \psi \le \ind_{[-2,2]}$ and $c_I$ denotes the center of $I$. Note that $P_I S_I = S_I P_I = P_I$. 

Similarly, for $k \in \Z$, one defines $P_{L_k} \coloneqq P_{L_k^-} + P_{L_k^+}$ and we denote by $S_k$ the smoothed-out enlargement of $P_{L_k}$, defined in a way analogous to the (single interval) case presented above. In certain parts of the paper, given a smoothed-out enlargement $S_k$ of $P_{L_k}$, $\widetilde{S}_k$ will denote another smoothed-out enlargement of $P_{L_k}$ such that $\widetilde{S}_k S_k = S_k \widetilde{S}_k = S_k$.

\subsection{Additional remarks on notation throughout the paper} If $g$ is an $L^1$-function and $t > 0$, we denote by $g_t (x) \coloneqq t^{-1} g( t^{-1} x) $, $x \in \R$, the corresponding $L^1$-rescaling of $g$ at scale $t$. In several parts of the paper $L^1$-rescalings of the function $\phi (x) \coloneqq (1+|x|^2)^{-3/4}$, $x \in \R$, naturally arise. In certain parts of the paper we shall be able to use $L^1$-rescalings of the function $\omega (x) \coloneqq (1+|x|^2)^{-10}$, $x \in \R$, that decays to zero faster than $\phi$ for large $|x|$.

If $f \in L^1 (\R^n)$ and $S$ is a measurable subset of $\R^n$ of positive measure, we denote the average of $f$ over $S$ by $\langle f \rangle_S \coloneqq {|S|^{-1}} \int_S f(y) \, \d y$ (cf. \S \ref{sec:Orlicz}). 

If $f, g \in L^1(\R^n)$ we denote by $f \ast g$ the convolution of $f$ with $g$ and by $f \ast^{x_j} g$ the convolution in the $j$th variable, $j \in \{1, \ldots, n\}$. 

For a one-dimensional H\"ormander--Mihlin multiplier $m\colon\R\to\mathbb C$, we define its H\"ormander--Mihlin norm by
\[
\|m\|_{\mathrm{HM}(\R)} \coloneqq \sup_{j=0,1}\, \sup_{\xi\neq 0} |\xi|^{j}\left|\partial^j m(\xi)\right|.
\]

For $n=1$, if $m \colon \mathbb{R} \to \mathbb{C}$ is a Marcinkiewicz multiplier, its Marcinkiewicz norm is given by 
\[
\| m \|_{M (\mathbb{R})} \coloneqq \sup_{I \in \mathcal{L}} \| m \|_{BV (I)} + \| m \|_{L^{\infty} (\mathbb{R})},
\] 
where $\| m \|_{BV (I)}$ denotes the total variation of $m$ on $I \in \mathcal{L}$.
For $n=2$, if $m : \R^2 \to \mathbb{C}$ is a two-parameter Marcinkiewicz multiplier, we define its (two-parameter) Marcinkiewicz norm by
\[
\| m \|_{M (\mathbb{R}^{\otimes 2})} \coloneqq N_2 (m) + \| m \|_{L^{\infty} (\mathbb{R}^2)},
\] 
where
\[
N_2 (m) \coloneqq \sup_{\eta \in \R} \sup_{I \in \mathcal{L}} \int_I | \partial_1 m (\xi, \eta) | \, \d \xi + \sup_{\xi \in \R} \sup_{J \in \mathcal{L}} \int_J | \partial_2 m (\xi, \eta) | \, \d \eta + \sup_{I,J \in \mathcal{L}} \int_{I \times J} |\partial_1 \partial_2 m (\xi, \eta) | \, \d \xi \d \eta . 
\]
For $n \in \N$ with $n>2$, the ($n$-parameter) Marcinkiewicz norm $\| m \|_{M (\mathbb{R}^{\otimes n})}$ of an $n$-parameter Marcinkiewicz multiplier $m$ on $\R^n$ is defined in an analogous way.

Finally, throughout the paper, the implicit constants in $\lesssim$ may depend on $n$; we suppress this dependence without further comment.

\section{Global vector-valued estimates in the one-dimensional case}\label{sec:1-d_vv}
In this section we show how, by combining suitably adapted arguments from \cites{BCPV,TW}, one obtains appropriate $\ell^2$-valued endpoint estimates for $\mathcal{R}_2$--multipliers that will be used in the proof of our main results; see Proposition~\ref{prp:vvaluedTW} and Theorem~\ref{thm:vvaluedR_2} below.

\subsection{A weak-type estimate and an implicit square function characterization}\label{Recall} One of the main results in \cite{TW} is \cite{TW}*{Proposition 5.1}, which says the following weak-type estimate for frequency localized modulated singular integrals.

\begin{proposition}[Tao--Wright, \cite{TW}]\label{prp:TW} Let $\{I\}_{I \in \mathcal{I}}$ be a collection of frequency intervals with overlap at most $N$ and suppose that for each $I \in \mathcal{I}$ we are given a function $f_I$ and a nonnegative function $F_I$ such that
\[
|f_I|\lesssim \phi_{|I|^{-1}} \ast F_I.
\]
Then
\begin{equation}\label{eq:weakL1TW}
 \left|\left\{x\in\R:\, \left|\sum_{I \in \mathcal{I}} T_{m_I} f_I(x)\right|>\alpha \right\}\right|\lesssim\frac{N^{1/2}}{\alpha}\int_{\R} \left(\sum_{I \in \mathcal{I}} |F_I|^2\right)^{\frac{1}{2}},
\end{equation}
where for each $I \in \mathcal{I}$, $m_I$ is supported on $I$ and there exists $\xi_I\in I$ such that $m_I(\cdot+\xi_I)$ is a H\"ormander--Mihlin multiplier. The implicit constant is absolute.
\end{proposition}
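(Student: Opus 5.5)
We prove \eqref{eq:weakL1TW} by a Calderón--Zygmund decomposition adapted to the frequency intervals, applied to the $\ell^2$-valued function $\vec F=\{F_I\}_{I\in\mathcal I}$ rather than to the $f_I$ themselves; this is the strategy of \cite{TW}. By homogeneity we may take $\alpha=1$. We perform the Calderón--Zygmund decomposition of the scalar function $G:=(\sum_I|F_I|^2)^{1/2}$ at height $1$. This gives disjoint dyadic intervals $Q$ with $\sum_Q|Q|\lesssim\|G\|_{1}$ and $\langle G\rangle_Q\lesssim 1$, while $G\le 1$ a.e. off $\bigcup Q$. Set $E:=\bigcup 3Q$, whose measure is admissible, so it suffices to control the sum on $\R\setminus E$.

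For each $I$ we split $F_I=F_I\ind_{\R\setminus\bigcup Q}+\sum_Q F_I\ind_Q$. We then split the family $\{Q\}$ according to scale relative to $|I|$. The intervals with $|Q|\le |I|^{-1}$ are the \emph{small} ones and the intervals with $|Q|>|I|^{-1}$ are the \emph{large} ones. Correspondingly we write $f_I=g_I+b_I$. Here $g_I$ is the part of $f_I$ that is majorized by $\phi_{|I|^{-1}}\ast g^\sharp_I$, where
\[
g^\sharp_I:=F_I\ind_{\R\setminus\bigcup Q}+\sum_{|Q|\le|I|^{-1}}\langle F_I\rangle_Q\ind_Q .
\]
Since the kernel $\phi_{|I|^{-1}}$ is essentially constant at scale $|I|^{-1}$, replacing $F_I\ind_Q$ by its average on small $Q$ costs only a bounded factor. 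To make this rigorous, I would partition $f_I$ via a smooth partition adapted to $\phi_{|I|^{-1}}\ast(F_I\ind_Q)$.

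\emph{Good part.} Minkowski's inequality in $\ell^2$ gives $\|(\sum_I|g^\sharp_I|^2)^{1/2}\|_\infty\lesssim1$ and $\|(\sum_I|g^\sharp_I|^2)^{1/2}\|_1\lesssim\|G\|_1$, hence
\[
\Big\|\Big(\sum_I|g^\sharp_I|^2\Big)^{1/2}\Big\|_2^2\lesssim\|G\|_1 .
\]
The next step is to use the $L^2$ almost-orthogonality of the pieces $T_{m_I}g_I$. Because the $m_I$ are supported on $I$ and the intervals overlap at most $N$ times,
\[
\Big\|\sum_I T_{m_I}g_I\Big\|_2^2\le N\sum_I\|g_I\|_2^2\lesssim N\sum_I\|g_I^\sharp\|_2^2 .
\]
Chebyshev's inequality then bounds the measure of the corresponding level set by $N\|G\|_1$. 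This is lossy: it gives $N$ rather than $N^{1/2}$. To obtain $N^{1/2}$, split the level as $\alpha=N^{1/2}\lambda$ and run the decomposition at height $\lambda=N^{-1/2}$. The $L^2$ term then gives $N\lambda^{-2}\cdot\lambda\|G\|_1=N^{1/2}\|G\|_1$, matching \eqref{eq:weakL1TW}.

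\emph{Bad part.} For $|Q|>|I|^{-1}$, the function $b_{I,Q}$ is concentrated near $Q$, up to the tail of $\phi_{|I|^{-1}}$. Write $m_I=e^{i\xi_I\cdot}$-modulation of a Hörmander--Mihlin multiplier. Away from $3Q$, the kernel of $T_{m_I}$ restricted to frequency $I$ decays like $|I|^{-1}\,\mathrm{dist}(x,Q)^{-2}$. Combined with the slow tail $(1+|I||x|)^{-3/2}$ of $\phi$, this gives
\[
\int_{\R\setminus 3Q}|T_{m_I}b_{I,Q}|\lesssim(|I||Q|)^{-1/2}\int_Q F_I .
\]
The exponent $3/4$ in $\phi$ is exactly what leaves this square-root gain. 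Summing over $I$ with $|I|>|Q|^{-1}$ directly would lose; instead, use Cauchy--Schwarz in $I$ together with the geometric factor $(|I||Q|)^{-1/2}$ and the overlap $N$. Grouping $I$ by dyadic length, each length class has at most $\sim N$ overlapping members near any frequency, which yields
\[
\sum_I(|I||Q|)^{-1/2}\int_Q F_I\lesssim N^{1/2}\int_Q G .
\]
Then sum over $Q$.

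The main obstacle is this last summation. The individual $F_I$ are not constant in $I$, and Cauchy--Schwarz must be done pointwise inside $\int_Q$ without losing a factor of $\#\{I\}$. I expect the careful route to be that of \cite{TW}: replace the $L^1$ estimate off $3Q$ by an $L^2$-orthogonality argument for the tails, again exploiting the bounded overlap of the $I$. If that fails, a Rubio de Francia-type $L^2$ bound suffices.
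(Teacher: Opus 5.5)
Your skeleton is sound and matches the paper's sketch of the vector-valued version (Proposition~\ref{prp:vvaluedTW}, adapted from \cite{TW}). You run a Calder\'on--Zygmund decomposition of $G=(\sum_I|F_I|^2)^{1/2}$ at height $\alpha N^{-1/2}$ and treat the good part in $L^2$ via Plancherel and the overlap $N$. You also absorb the scales $|I||Q|\le 1$ into the good part. Your majorant trick there is legitimate because $F_I\ge 0$, and it does the job of the paper's mean-zero atoms, since the paper never uses the gain $|I||J|$. (At level $1$, Chebyshev gives $N\lambda\|G\|_1$, not $N\lambda^{-2}\lambda\|G\|_1$, but that is cosmetic.)

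The genuine gap is the large-scale bad part $|I||Q|>1$, which is the whole difficulty.
\begin{itemize}
\item \emph{The kernel claim is false.} Since $m_I(\cdot+\xi_I)$ is singular at $0$, the kernel of $T_{m_I}$ is a modulated Calder\'on--Zygmund kernel, which in general decays no better than $|x|^{-1}$ at all scales $|x|\gg|I|^{-1}$. A concrete case is $m_I(\xi)=\mathrm{sgn}(\xi-\xi_I)\chi((\xi-\xi_I)/|I|)$. For this $m_I$, $\int_{\R\setminus 3Q}|T_{m_I}b_{I,Q}|=\infty$ as soon as $b_{I,Q}$ is supported in $2Q$ with $\widehat{b_{I,Q}}(\xi_I)\neq0$, and the majorant hypothesis allows this. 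Your factor $(|I||Q|)^{-1/2}$ is only the $L^1$ mass of the tail of $\phi_{|I|^{-1}}\ast(F_I\ind_Q)$ off $3Q$; it says nothing about $T_{m_I}$.
\item \emph{Summing over $I$ in $L^1$ cannot work.} Overlap $N$ does not bound the number of intervals of a given length. Taking $F_I=c_I\ind_Q$ shows that $\sum_I(|I||Q|)^{-1/2}\int_Q F_I\lesssim N^{1/2}\int_Q G$ is a false $\ell^1$-versus-$\ell^2$ inequality.
\item \emph{Your fallbacks do not close this.} Bounded overlap can only be exploited through Plancherel, but the $b_{I,Q}$ are controlled only in $L^1$. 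The direct bound $\|\phi_{|I|^{-1}}\ast(F_I\ind_Q)\|_2^2\lesssim |I||Q|\cdot|Q|\langle F_I\rangle_Q^2$ loses the unbounded factor $|I||Q|$.
\end{itemize}

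The missing idea is a frequency splitting at scale $|J|^{-1}$ around the singularity $\xi_I$, combined with a spatial splitting. For $|I||J|>1$ the paper separates two pieces.
\begin{itemize}
\item \emph{Off $2J$.} This piece is majorized pointwise by $\phi_{|I|^{-1}}\ast(\langle F_I\rangle_J\ind_J)$ and goes into $L^2$ exactly like the good part.
\item \emph{On $2J$.} Here it writes $T_{m_I}=T_{m_I}P_{J,I}+T_{m_I}Q_{J,I}$ with $P_{J,I}=\mathrm{Mod}_{\xi_I}(\psi_{|J|}\ast\cdot)\mathrm{Mod}_{-\xi_I}$.
\end{itemize}
The $P_{J,I}$ piece carries the slowly decaying part of the kernel, but it is smooth at scale $|J|$ and bounded by $\M(\ind_J)^{10}\langle F_I\rangle_J$. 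This is what converts the $L^1$ information $\int_J F_I$ into the $L^2$ quantity $|J|\langle F_I\rangle_J^2$. Plancherel and $\sum_I\langle F_I\rangle_J^2\le\langle G\rangle_J^2\lesssim\alpha^2/N$ then finish that piece.

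The $Q_{J,I}$ piece has the frequencies within $|J|^{-1}$ of $\xi_I$ removed, so its kernel lives at scale $\lesssim |J|$. Outside $\bigcup_J 2J$ it is controlled by the $L^2$-based argument of \cite{TW}. There the $I$'s are orthogonal and the relevant symbol lives on a window $I(J)$ with $|I(J)|\sim|J|^{-1}$. This gives the bound $\alpha^{-1}\sum_J |J|^{-1/2}(N\sum_I |J|^4\langle F_I\rangle_J^2|I(J)|)^{1/2}\lesssim\sum_J|J|$. Without this step or an equivalent, the proposal does not prove the proposition.
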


In order to efficiently apply this proposition, the authors of \cite{TW} rely on an implicit square function characterization of $L\log^{1/2}L( [0,1] )$; see \cite{TW}*{Proposition 4.1}. We have the following generalization of \cite{TW}*{Proposition 4.1} from \cite{BCPV}.

\begin{proposition}[\cite{BCPV}*{Corollary E}] \label{prp:weakSF} Let $\sigma$ be a nonnegative integer. Let $J\subset \R$ be a bounded interval and let $f \in L\log^{\frac{\sigma+1}{2}}L(J)$. 

For $2^k\geq |J|^{-1}$ there exist functions $f_k$ with $\supp f_k\subset 4J$, satisfying $|S_k f|\lesssim \phi_{2^{-k}} \ast f_k$, and
\[
\left\|\left( \sum_{2^k\geq|J|^{-1}}|f_k|^2\right)^{\frac{1}{2}}\right\|_{L\log^{\frac{\sigma}{2}}L(J)} \lesssim \|f\|_{ L\log^{\frac{\sigma+1}{2} } L(J)}.
\]

If in addition $\int f=0$ then for $2^k<|J|^{-1}$ we can find functions $f_k$ satisfying $|S_k f|\lesssim \phi_{2^{-k}}\ast f_k$ and such that for every $\alpha>0$ the following modular inequality holds true
\[
\int_{\R} \Phi_{\frac{\sigma}{2}}\left(\alpha ^{-1}\left( \sum_{2^k<|J|^{-1}}|f_k|^2\right)^{\frac{1}{2}}\right) \lesssim \frac{1}{|J|}  \int_J \Phi_{\frac{\sigma}{2}}\left(\frac{|f|}{\alpha}\right).
\]
In this case, i.e. when $2^k<|J|^{-1}$, we do not claim anything about the support of the functions $f_k$.
\end{proposition}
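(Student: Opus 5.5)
The plan is to prove the statement by duality, turning it into an exponential-integrability estimate for Littlewood--Paley sums, which is exactly the content of the Chang--Wilson--Wolff inequality. The Orlicz dual of $L\log^{\frac{\sigma+1}{2}}L(J)$ is $\exp L^{\frac{2}{\sigma+1}}(J)$, and the dual of $L\log^{\frac{\sigma}{2}}L(J;\ell^2_k)$ is $\exp L^{\frac{2}{\sigma}}(J;\ell^2_k)$ (with $L^\infty(\ell^2)$ when $\sigma=0$). Fix $f$ and consider the convex set $\mathcal C$ of sequences $\{f_k\}$ with $f_k\ge 0$, $\supp f_k\subset 4J$ and $|S_kf|\le \phi_{2^{-k}}\ast f_k$. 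We must show that $\mathcal C$ contains an element whose square function has norm $\lesssim \|f\|_{L\log^{\frac{\sigma+1}{2}}L(J)}$.

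\emph{Step 1: reduction by a minimax argument.} By Hahn--Banach, or a minimax theorem after truncating to finitely many $k$ and passing to a limit, it suffices to prove the following dual bound. For every nonnegative family $\{g_k\}$ supported in $4J$ with
\[
\Big\|\Big(\sum_k |g_k|^2\Big)^{\frac12}\Big\|_{\exp L^{2/\sigma}}\le 1,
\]
there is a sequence $\{f_k\}\in\mathcal C$ with $\sum_k\int f_k g_k\lesssim \|f\|_{L\log^{\frac{\sigma+1}{2}}L(J)}$.

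For fixed $g_k$, minimizing $\int f_k g_k$ over $f_k\ge0$ subject to $\phi_{2^{-k}}\ast f_k\ge |S_kf|$ is a linear program. Its dual value is $\sup\int |S_kf|\,h_k$, taken over $h_k\ge 0$ with $\phi_{2^{-k}}\ast h_k\le g_k$. It therefore suffices to bound
\[
\sum_k\int |S_kf|\,h_k=\int f\,\overline{H}, \qquad H\coloneqq \sum_k S_k^*(u_kh_k),\quad |u_k|\le 1 .
\]
Now $\phi_{2^{-k}}\ast h_k\le g_k$, and $\phi$ is only mildly decaying, so $h_k$ is controlled by $g_k$ averaged at scale $2^{-k}$. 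By Orlicz--H\"older, everything then reduces to
\begin{equation}\label{eq:plan-cww}
\|H\|_{\exp L^{\frac{2}{\sigma+1}}(J)}\lesssim \Big\|\Big(\sum_k|\phi_{2^{-k}}\ast h_k|^2\Big)^{\frac12}\Big\|_{\exp L^{\frac{2}{\sigma}}}.
\end{equation}

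\emph{Step 2: the Chang--Wilson--Wolff input and its iteration.} For $\sigma=0$, \eqref{eq:plan-cww} says that a Littlewood--Paley sum whose square function is bounded is exponentially square integrable. This is the Chang--Wilson--Wolff inequality, transferred from dyadic martingales to smooth frequency pieces $S_k^*$. The transference uses the standard comparison of $S_k^*(u_kh_k)$ with martingale differences, controlled by the maximal averages $\phi_{2^{-k}}\ast h_k$.

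For $\sigma\ge1$ I would use the $p$-quantitative form of Chang--Wilson--Wolff, valid in weighted $L^2$ and hence for all large $p$:
\[
\|H\|_{L^p}\lesssim \sqrt p\,\|S H\|_{L^p}, \qquad p\ge 2 .
\]
Here $SH$ denotes the square function $\big(\sum_k|\phi_{2^{-k}}\ast h_k|^2\big)^{1/2}$. Combined with $\|SH\|_{L^p}\lesssim p^{\sigma/2}$, which is the $p$-growth characterization of $\exp L^{2/\sigma}$, this gives $\|H\|_{L^p}\lesssim p^{\frac{\sigma+1}{2}}$, which characterizes $\exp L^{\frac{2}{\sigma+1}}$. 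This iteration is the step where the half-logarithm is gained, and I expect it to be the main obstacle. One has to make sure that the $L^p$ constant in Chang--Wilson--Wolff is exactly $\sqrt p$ uniformly, including the passage from martingale to smooth Littlewood--Paley pieces with the slowly decaying $\phi$. That is why $\phi=(1+|x|^2)^{-3/4}$ is used rather than a compactly supported bump.

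\emph{Step 3: the two frequency regimes.} For $2^k\ge|J|^{-1}$ the tails of $S_k f$ outside $4J$ are dominated by $\phi_{2^{-k}}\ast(|f|\ind_J)$. This permits the support restriction $\supp f_k\subset 4J$ and keeps the duality local to $J$. For $2^k<|J|^{-1}$ I would use $\int f=0$. This gives
\[
|S_kf|\lesssim 2^k|J|\,\phi_{2^{-k}}\ast(|J|^{-1}\ind_J)\,\|f\|_{L^1(J)},
\]
so one can take $f_k\coloneqq 2^k|J|\,\langle|f|\rangle_J\,\phi_{2^{-k}}\ast\ind_J$, up to a harmless rescaling. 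The geometric factor $2^k|J|$ makes $\sum_{2^k<|J|^{-1}}|f_k|^2$ summable. The stated modular inequality then follows from Jensen's inequality and the convexity bound \eqref{eq:convex-scale}, with no need for support control.
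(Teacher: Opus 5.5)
Your low-frequency step and your iteration idea match the paper. The iteration combines Chang--Wilson--Wolff with constant $\sqrt p$ and the $p$-growth characterization of $\exp L^{2/\sigma}$. However, the crux of Step 2 is not established, and the mechanism you propose for it does not work as stated.

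Because you dualize the majorant condition $|S_kf|\le \phi_{2^{-k}}\ast f_k$ itself, your dual object is $H=\sum_k S_k^*(u_kh_k)$. This is a sum of \emph{smooth} frequency pieces, and all you know about it is the majorant $(\sum_k|\phi_{2^{-k}}\ast h_k|^2)^{1/2}$. The dyadic Chang--Wilson--Wolff inequality needs the martingale square function $(\sum_\ell|\mathbb D_\ell H|^2)^{1/2}$, and there is no pointwise comparison between the two:
\begin{itemize}
\item For $\ell<k$ and $|I|=2^{-\ell}$, $\mathbb D_\ell S_k^*(u_kh_k)$ on $I$ is controlled only by $2^{\ell-k}$ times the values of $\phi_{2^{-k}}\ast h_k$ within $O(2^{-k})$ of the jump points of $h_I$.
\item If $h_k$ concentrates at such a point, these values exceed $\phi_{2^{-k}}\ast h_k(x)$, $x\in I$, by up to $2^{3(k-\ell)/2}$. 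That leaves a non-summable loss $2^{(k-\ell)/2}$.
\end{itemize}
This is why Tao and Wright average over translated grids. In your dual direction, though, that average would have to be taken inside the $L^p$ norm of a square function, with constants uniform in $p$. A vector-valued maximal inequality will not do this: its $L^p(\ell^2)$ constant blows up as $p\to\infty$ and would eat the half-logarithm gain. You would need one of two things, and neither is supplied:
\begin{itemize}
\item a random-grid estimate uniform in $p$; or
\item a continuous (area-function) version of Chang--Wilson--Wolff, together with a pointwise domination of the area function of $H$ by $(\sum_k|\phi_{2^{-k}}\ast h_k|^2)^{1/2}$.
\end{itemize}

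The paper, via \cite{BCPV} and \cite{TW}*{Proposition 4.1}, sidesteps this by dualizing a \emph{linear} constraint. One looks for $f_\ell$ with $\mathbb D_\ell f_\ell=\mathbb D_\ell f$. Hahn--Banach for the map $\{F_\ell\}\mapsto\{\mathbb D_\ell F_\ell\}$ then turns the dual hypothesis into a bound on exactly the dyadic square function $(\sum_\ell|\mathbb D_\ell G_\ell|^2)^{1/2}$ of $\sum_\ell\mathbb D_\ell G_\ell$. Dyadic Chang--Wilson--Wolff applies to this verbatim. The passage to the smooth majorants $\phi_{2^{-k}}\ast f_k$ is made afterwards, in the primal direction, by the Haar expansion with random translations and the $2^{-|\ell-k|/2}$ decay (cf.\ the corresponding display in \S\ref{sec:proofmain}). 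There the $\theta$-average sits in a linear pointwise bound and is pulled outside the norm by Minkowski. A general interval $J$ is then handled by rescaling from $[0,1]$.

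There is also a secondary issue in your Step 1. The inner linear-programming step needs strong duality in an infinite-dimensional setting. Weak duality only gives primal infimum $\ge$ dual supremum, which is the wrong direction for your purpose.
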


\begin{remark}Specializing to the case $\sigma=0$ the estimates of Proposition~\ref{prp:weakSF} imply
\[
\frac{1}{|J|}\int_{\R}\left(\sum_{k\in\Z}|f_k|^2\right)^{\frac{1}{2}} \lesssim \|f\|_{L\log^{\frac{1}{2}}L(J)}.
\]
\end{remark}

The proof of Proposition~\ref{prp:weakSF} is essentially contained in \cite{BCPV}; we include it below, adapted to the present notation, for the reader's convenience.

\begin{proof}[Proof of Proposition~\ref{prp:weakSF}]The proof follows by the corresponding estimate for functions $f$ with $\supp f\subset [0,1]$. Indeed, for such functions we can use the Chang--Wilson--Wolff inequality \cite{CWW} and duality to construct, for $2^k\geq 1$, functions $f_k$ with the desired property; see \cite{BCPV}*{p.~1281--1284} for the details of this argument. By splitting any interval $[-M,M]$, with $M>0$ being an absolute constant, into at most $O(M)$ intervals of length $1$ and translating, the same is true for functions with $\supp f \subset [-M,M]$. Now replace $J$ by an interval $J^{\ast}$ with the same center as $J$ and dyadic length so that $|J|\leq |J^{\ast}| \leq C|J|$ for an absolute constant $1<C\leq 2$. We argue for $\supp f \subset J^{\ast}$.

Let $J^{\ast}=[x_0,x_0+|J^\ast|]$ be any interval with $L \coloneqq |J^{\ast}|$ dyadic. Define
\[
\widetilde{f} (u)\coloneqq f(x_0+Lu),\qquad u\in\mathbb{R},
\]
and note that $ \supp \widetilde f\subset [0,1]$. 
Since we assume $2^k\geq |J|^{-1}\geq L^{-1}$ we will have that if $k' \in \mathbb{Z}$ is such that $2^{k'} =  2^k L $ then $2^{k'} \geq 1$. For $x=x_0+Lu$ we have
\[
 (\phi_{2^{-k}} \ast f)(x_0+Lu) =(\phi_{2^{-k'}}\ast \widetilde f)(u).
 \]
By the normalized version of the proposition applied to $\widetilde f$ at index $k'$ with $2^{k'}\geq 1$, there exists
a function $\widetilde f_{k'}$ supported on $[-\tfrac{1}{3},\tfrac{4}{3}]$ such that
\[
 |  \phi_{2^{-k'}} \ast \widetilde{f}    |\ \lesssim\ \psi_{2^{-k'}} \ast \widetilde f_{k'}.
\]
Now define
\[
f_k(y)\coloneqq \widetilde {f}_{k'} \!\left(\frac{y-x_0}{L}\right).
\]
Then
\[
\supp f_k \subset x_0+L\cdot\left[-\tfrac13,\tfrac43\right]
=\left[x_0-\tfrac{L}{3},\,x_0+\tfrac{4L}{3}\right] \eqqcolon J_1 .
\]
Moreover, undoing the change of variables,
\[
(\psi_{2^{-k'}}\ast\widetilde f_{k'})(u)
=(\psi_{2^{-k}}\ast f_k)(x_0+Lu).
\]
Thus, for $x=x_0+Lu$,
\[
|(\phi_{2^{-k}}\ast f)(x)|
=|(\phi_{2^{-k'}}\ast\widetilde f)(u)| 
\lesssim (\psi_{2^{-k'}}\ast\widetilde f_{k'})(u)
=(\psi_{2^{-k}}\ast f_k)(x).
\]
This proves the rescaled estimate
\[
|\phi_{2^{-k}}\ast f|\ \lesssim\ \psi_{2^{-k}}\ast f_k
\]
with $\supp f_k\subset J_1\subset 4J$ as claimed. 

For $2^k<|J|^{-1}$ it is enough to observe that, for an appropriate enlargement $\widetilde{S}_k$ of $S_k$, by the mean zero condition on $f$ we have
\[
|\widetilde S_k f|\lesssim \omega_{2^{-k}} \ast | f | \lesssim \omega_{2^{-k}} \ast \left(2^k |J| \langle |f|\rangle_J \ind_J \right) .
\]
We set $f_k\coloneqq \widetilde S_k f$ for $2^k<|J|^{-1}$ and define
\[
H(x)\coloneqq
\left(\sum_{2^k<|J|^{-1}} \left(2^k |J| (\ind_{J} \ast \omega_{2^{-k}})(x)\right)^2\right)^{1/2}.
\]
Then
\begin{equation}\label{eq:Gfactor}
G \coloneqq \left(\sum_{2^k<|J|^{-1}} |f_k|^2\right)^{\frac{1}{2}} \lesssim \langle |f|\rangle_J\, H .
\end{equation} 
Note that $H(x)\lesssim 1$ for all $x\in \R$ and also $\|G\|_{L^1(\R)}\leq \langle |f|\rangle_J \|H\|_{L^1(\R)}\lesssim \langle |f|\rangle_J$. Letting $C_0\ge 1$ be such that $H(x)\le C_0$ for all $x\in \R$, we have for every $\alpha>0$ and all $x\in \R$,
\[
\Phi_{\frac{\sigma}{2}} \left(\frac{G(x)}{\alpha}\right)
=\Phi_{\frac{\sigma}{2}}\left(\frac{\langle |f|\rangle_J C_0 H(x)}{C_0\alpha}\right)
\leq C_0^{-1}H(x) \Phi_{\frac{\sigma}{2}} \left (\frac{C_0 \langle |f|\rangle_J}{\alpha}\right),
\]
where in the last inequality we used the convexity estimate \eqref{eq:convex-scale}. Integrating and using  again the fact that $\|H\|_{L^1(\R)}\lesssim 1$, we get for all $\alpha>0$
\begin{equation}\label{eq:prejensen}
\int_{\mathbb{R}} \Phi_{\frac{\sigma}{2}} \left(\frac{G(x)}{\alpha}\right) \, \d x
\le C_0 ^{-1}
\Phi_{\frac{\sigma}{2}} \left(\frac{C_0 \langle |f|\rangle_{J}}{\alpha}\right)
\int_{\mathbb{R}} H(x)\,\d x
\lesssim \Phi_{\frac{\sigma}{2}} \left(\frac{C_0\,\langle |f|\rangle_{J}}{\alpha}\right).
\end{equation}
Since $\Phi_{\frac{\sigma}{2}}$ is convex  we can use Jensen's inequality to estimate
\[
\Phi_{\frac{\sigma}{2}} \left(\frac{C_0\,\langle |f|\rangle_J}{\alpha}\right)
=
\Phi_{\frac{\sigma}{2}} \left( \frac{1}{|J|}  \int_J \frac{C_0 |f(y)|}{\alpha}\,\d y\right)
\le  \frac{1}{|J|} \int_J \Phi_{\frac{\sigma}{2}} \left (\frac{C_0 |f(y)|}{\alpha}\right)\,\d y.
\]
Inserting the estimate of the last display into \eqref{eq:prejensen}, we obtain
\[
\int_{\mathbb{R}} \Phi_{\frac{\sigma}{2}} \left(\frac{G(x)}{\alpha}\right)\,\d x
\lesssim \frac{1}{|J|}  \int_J \Phi_{\frac{\sigma}{2}} \left(\frac{C_0 |f(y)|}{\alpha}\right)\,\d y \lesssim \frac{1}{|J|}  \int_J \Phi_{\frac{\sigma}{2}} \left(\frac{|f(y)|}{\alpha}\right) \,\d y 
\]
by the submultiplicativity of $\Phi_{\frac{\sigma}{2}}$.
\end{proof}

\subsection{A global modular estimate in one parameter}\label{sec:gl_R} In this section we deal with a class of multipliers on $\R$ which we denote by $\mathcal R_2$. The precise definition of this class is given in Definition~\ref{def:R2n_atoms_norm} for the special case $n=1$. For the purposes of this section, and using the approximation argument from \cite{TW}*{p.~533--534}, we note that in order to prove a bound for an $\mathcal R_2$--multiplier on $\R$ it suffices to consider linear operators of the form
\[
   f\longmapsto  Tf=\sum_{I\in\mathcal I} \lambda_I P_I f,
\]
where the sum is over a collection of intervals $\mathcal I$ such that each $I$ is contained in a unique Littlewood--Paley interval $L_{k_I}$, $\sum_{I\in\mathcal I}\ind_I\leq N$ for some $N\in\N$, and the coefficients $\{\lambda_I\}_{I\in\mathcal I}$ satisfy
\[
   \sum_{\substack{I\in\mathcal I\\k_I=k}}|\lambda_I|^2 \leq \frac{1}{N}.
\]
Then, proving bounds for an arbitrary $\mathcal R_2$--multiplier reduces to bounding $T$ between the same spaces, uniformly in $N$.

The estimate of Proposition~\ref{prp:TW} may be combined with Proposition~\ref{prp:weakSF} and a suitable Calder\'on--Zygmund decomposition at the Orlicz scale to recover the global estimate for $\mathcal R_2$--multipliers, namely \eqref{eq:globaloneparam} below. This is a special case of \cite{BCPV}*{Theorem A}, where the estimate was proved via a slightly different combination of these ingredients. Thus, while this subsection contains no new result, we take the chance to explain this new argument here, since the same structure will be exploited below to prove the vector-valued analog of \eqref{eq:globaloneparam}.

Consider a function $f\in L\log^{\frac{1}{2}}L(\R)$ and perform a Calder\'on--Zygmund decomposition of $f$, similar to the corresponding one in the proof of \cite{BCPV}*{Proposition 5.2}. More precisely, consider the maximal dyadic intervals $J$ such that $\langle |f|\rangle_{\Phi_{1/2},J}>\alpha$ and define 
\[
f=g+b, 
\]
where
\[
g \coloneqq \left[f\ind_{ \R\setminus \bigcup_J J }  +\sum_J \langle f \rangle_{J}\ind_J\right] \quad \text{and} \quad b \coloneqq \sum_J \left(f-\langle f \rangle_{J}\right)\ind_J.
\]
We have, as usual, 
\[
\left|\bigcup_J J\right|\lesssim \int_{\R} \Phi_{1/2} \left(\frac{|f|}{\alpha}\right) ,\qquad \|g\|_{L^\infty(\R)}\lesssim \alpha,\qquad \langle |b_J|\rangle_{\Phi_{\frac{1}{2}},J}\lesssim \langle |f|\rangle_{\Phi_{1/2},J} \lesssim \alpha.
\]
Let $Tf=\sum_I \lambda_I P_I f$ be an $\mathcal R_2$--multiplier as described above. The good part $Tg$ is estimated directly in $L^2(\R)$ without invoking Proposition~\ref{prp:TW},
\[
 \left|\left\{x\in\R:\, |T(g)(x)|>\alpha \right\}\right|\lesssim\frac{1}{\alpha^2}\int_{\R}|g|^2\leq\frac{1}{\alpha}\int_{\R} |f|.
\]
We now move to the study of the bad part. Each atom has mean zero and $b_J\in L\log^{\frac{1}{2}}L(J)$. Proposition~\ref{prp:weakSF} applies and we get that for each bad interval $J$ and for each $k\in\Z$ there exists a function $F_{k,J}$ such that 
\[
 |S_k b_J|\lesssim \phi_{2^{-k}}\ast F_{k,J}.
\]
Note that $P_I = P_I S_I S_{k_I} $ for any smoothed-out enlargement $S_I$ of $P_I$ and any smooth Littlewood--Paley projection $S_{k_I}$ associated with the Littlewood--Paley interval $L_{k_I} \supset I$. We now define 
\[
f_{I,J} \coloneqq \lambda_I S_I S_{k_I}(b_J)
\]
and note that
\[
|f_{I,J}| \lesssim \phi_{ 2^{-k_I}} \ast \omega_{|I|^{-1}}\ast(\lambda_I F_{k_I,J})\lesssim \widetilde \omega_{|I|^{-1}} \ast (\lambda_I F_{k_I,J}).
\]
With this definition we have that
\[
Tb_J=\sum_I \lambda_I P_I b_J =\sum_I P_I (f_{I,J}).
\]
Moreover, by Proposition~\ref{prp:weakSF} for $\sigma = 0$, we have
\[
\int_\R \left(\sum_I |\lambda_I F_{k_I,J}|^2\right)^{\frac{1}{2}}\lesssim N^{-\frac{1}{2}}|J| \|b_J\|_{L\log^{\frac{1}{2}}L(J)}.
\]
Applying Proposition~\ref{prp:TW} to $b_J$ for each bad interval $J$, we get
\[
\left|\left\{|Tb_J|>\alpha\right\}\right|\lesssim N^{\frac{1}{2}}\alpha^{-1}\sum_J |J| N^{-\frac{1}{2}}\|b_J\|_{L\log^{\frac{1}{2}}L(J)}\lesssim \sum_J |J|\lesssim \int_{\R} \Phi_{1/2}\left(\frac{|f|}{\alpha}\right).
\]
Combining the good and bad parts yields the global modular estimate from \cite{BCPV}*{Theorem A}
\begin{equation}\label{eq:globaloneparam}
\left|\left\{x\in\R:\, |Tf(x)|>\alpha\right\}\right|\lesssim \int_{\R} \Phi_{1/2}\left(\frac{|f(x)|}{\alpha}\right)\, \d x.
\end{equation}

Of course this also implies the local estimate on any interval $K$
\[
\frac{1}{|K|}\left|\left\{x\in K:\, |T(f\ind_K)|>\alpha \right\}\right|\lesssim\frac{1}{\alpha} \|f\|_{L\log^{\frac{1}{2}}L(K)}.
\]
Remember that we use normalized (averaged) Orlicz norms.

\subsection{A vector-valued extension in the one-dimensional case} Let us now see how to get a vector-valued analog of Proposition~\ref{prp:TW}. The setup is as follows. For each $\tau \in \Z$ we let $\mathcal I_\tau$ be a family of intervals such that in each family the pointwise overlap is at most $N_\tau$, namely,
\[
 \sum_{I\in\mathcal I_\tau} \ind_I \leq N_\tau.
\]
For a family of functions $\vec f\coloneqq \{f_{\tau,I}\}_{\tau,I}$, we consider the operator $\mathcal{T}$ given by
\[
\mathcal{T} (\vec f)\coloneqq \left\{ \sum_{I\in\mathcal I_\tau} T_{m_{\tau,I}}f_{\tau,I} \right\}_\tau .
\]
The following proposition will play an important role in the proof of our main result, Theorem~\ref{thm:multiparam}.

\begin{proposition}\label{prp:vvaluedTW} Let $\{\mathcal I_\tau\}_\tau$ denote a collection of families of intervals in $\R$ such that
\[
\sum_{I\in\mathcal I_\tau}\ind_I \leq N_\tau\qquad \text{for each }\tau.
\] 
For each $\tau$ and $I\in\mathcal I_\tau$ let $m_{\tau,I}$ be supported in $I$ and suppose there exists $\xi_{\tau,I}\in I$ such that $m_{\tau,I}(\cdot+\xi_{\tau,I})$ is a H\"ormander--Mihlin multiplier and moreover, $\sup_{\tau,I}\|m_{\tau,I}(\cdot +\xi_{\tau,I})\|_{\mathrm{HM}(\R)}\leq 1$. Assume that for each $\tau$ and $I\in\mathcal I_\tau$ the function $\vec f\coloneqq \{f_{\tau,I}\}_{\tau,I}$ satisfies 
\[
|f_{\tau,I}|\lesssim \phi_{|I|^{-1}}\ast F_{\tau,I}.
\]
The following global estimate holds
\[ 
\left\| \mathcal T (\vec f)\right\|_{L^{1,\infty}(\ell^2 _\tau)} \lesssim \int_{\R} \left(\sum_\tau N_\tau \sum_{I\in\mathcal I_\tau }|F_{\tau,I}|^2\right)^{\frac{1}{2}}.
\]
\end{proposition}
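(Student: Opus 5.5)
Rather than using Proposition~\ref{prp:TW} as a black box, I would rerun the Tao--Wright argument from \cite{TW}*{\S5} with $\ell^2_\tau$-valued functions. The one-parameter proof has two steps. First, each $T_{m_{\tau,I}} f_{\tau,I}$ is written as a modulated H\"ormander--Mihlin operator applied to a function whose frequency support, after demodulation, lies at scale $|I|$. Second, one performs a Calder\'on--Zygmund decomposition of the majorant $F\coloneqq \bigl(\sum_\tau N_\tau\sum_{I\in\mathcal I_\tau}|F_{\tau,I}|^2\bigr)^{1/2}$, which is a scalar function, at height $\alpha$. The good part is handled in $L^2$ and the bad part by kernel decay off the exceptional set. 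Both steps use only orthogonality and pointwise kernel bounds, so they pass to $\ell^2_\tau$: orthogonality becomes a Hilbert-space statement, and the kernel bounds hold uniformly in $\tau$ and $I$.

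\emph{Decomposition.} Let $\{Q\}$ be the maximal dyadic intervals with $\langle F\rangle_Q>\alpha$, and put $E\coloneqq\bigcup 3Q$, so that $|E|\lesssim\alpha^{-1}\|F\|_{L^1}$. For each $(\tau,I)$, split $F_{\tau,I}=G_{\tau,I}+\sum_Q F_{\tau,I}\ind_Q$, where $G_{\tau,I}$ equals $F_{\tau,I}$ off $\bigcup Q$ and is replaced by $\langle F_{\tau,I}\rangle_Q$ on each $Q$. By Minkowski's inequality in $\ell^2$, $\bigl(\sum_\tau N_\tau\sum_I|G_{\tau,I}|^2\bigr)^{1/2}\lesssim\alpha$ and its $L^1$ norm is $\lesssim\|F\|_{L^1}$. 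Correspondingly, split $f_{\tau,I}$ following \cite{TW}: the piece of $f_{\tau,I}$ controlled by $\phi_{|I|^{-1}}\ast G_{\tau,I}$, plus the contributions from the intervals $Q$ with $|Q|\le|I|^{-1}$ and from those with $|Q|>|I|^{-1}$. Concretely, multiply $f_{\tau,I}$ by a smooth partition adapted to the $Q$'s at scale $|I|^{-1}$, as Tao--Wright do, using the majorization $|f_{\tau,I}|\lesssim\phi_{|I|^{-1}}\ast F_{\tau,I}$.

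\emph{Good part.} By Chebyshev it suffices to bound $\bigl\|\sum_I T_{m_{\tau,I}}g_{\tau,I}\bigr\|_{L^2(\ell^2_\tau)}^2$. For fixed $\tau$, the overlap $N_\tau$ and almost-orthogonality of the frequency-localized pieces give $\bigl\|\sum_I T_{m_{\tau,I}}g_{\tau,I}\bigr\|_2^2\lesssim N_\tau\sum_I\|g_{\tau,I}\|_2^2$. Here one must keep track of the fact that $g_{\tau,I}$ is only approximately supported on $I$ in frequency. Tao--Wright handle this by inserting $\widetilde S_I$, and the Schwartz tails are absorbed since $|g_{\tau,I}|\lesssim\phi\ast G_{\tau,I}$. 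Summing over $\tau$ yields $\int\sum_\tau N_\tau\sum_I|G_{\tau,I}|^2\lesssim\alpha\|F\|_{L^1}$, hence the contribution $\lesssim\alpha^{-1}\|F\|_1$. This is the only place where $N_\tau$ enters, and it enters exactly in the weighted form stated in the proposition.

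\emph{Bad part.} Off $E$, we estimate in $L^1(\ell^2_\tau)$, using $\|\cdot\|_{\ell^2}\le\|\cdot\|_{\ell^1}$ only where the pointwise kernel decay of the demodulated H\"ormander--Mihlin operators at scale $|I|$ gives summable factors such as $\min(1,(|I||Q|)^{-1})$ or $(\operatorname{dist}(x,Q)|I|)^{-\epsilon}$. The main obstacle is here. In the scalar proof one sums over $I$ using the single $\ell^2$ structure, whereas we must keep the $\ell^2_\tau$ norm outside while still summing over $I\in\mathcal I_\tau$ with the weight $N_\tau^{1/2}$ and never passing to $\ell^1$ in $\tau$. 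I would first try Cauchy--Schwarz in $I$ for each fixed $\tau$ and $Q$: the pieces with $|Q|\le|I|^{-1}$ have kernels whose tails off $3Q$ decay geometrically in the scale ratio, and the number of $I$ at each scale meeting a given point is $\le N_\tau$. This should bound $\int_{\R\setminus E}\|\cdot\|_{\ell^2_\tau}$ by $\sum_Q\int_Q\bigl(\sum_\tau N_\tau\sum_I|F_{\tau,I}|^2\bigr)^{1/2}\lesssim\|F\|_1$. If that fails, the fallback is to follow \cite{TW} verbatim, treating $\{(\tau,I)\}$ as a single index set with modified overlap weights $N_\tau$. The scalar proof uses the overlap only in the $L^2$ step, so the remaining estimates should go through with $(\tau,I)$ as a single index.
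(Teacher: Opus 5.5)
There is a genuine gap in your treatment of the bad part, which is where all the difficulty of the proposition lies.

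\textbf{Why the $L^1$ route off $E$ fails.} Your main plan is to bound the bad contribution in $L^1(\ell^2_\tau)$ on $\R\setminus E$ using kernel decay of the demodulated H\"ormander--Mihlin operators. This cannot work, even for a single pair $(\tau,I)$.

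\begin{itemize}
\item The demodulated symbol $m_{\tau,I}(\cdot+\xi_{\tau,I})$ may jump at the origin (think of a smoothed $\ind_{[0,|I|]}$). So the kernel of $T_{m_{\tau,I}}$ behaves like $e^{2\pi i\xi_{\tau,I}x}/x$ for $|x|\gg|I|^{-1}$, with no extra factor $(\mathrm{dist}(x,Q)|I|)^{-\epsilon}$.
\item Write $f_{\tau,I}=a_{\tau,I}(\phi_{|I|^{-1}}\ast F_{\tau,I})$ with $|a_{\tau,I}|\lesssim 1$. The localized input $a_{\tau,I}\ind_{2Q}(\phi_{|I|^{-1}}\ast b)$ has no cancellation at frequency $\xi_{\tau,I}$, so its image is in general not integrable on $\R\setminus 3Q$.
\item Cauchy--Schwarz in $I$ with only the pointwise overlap bound cannot replace orthogonality. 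A family with overlap $N_\tau$ may contain infinitely many intervals of each length, so any $\ell^1_I$ summation diverges.
\item The pieces with $|I||Q|\le 1$ are not localized near $3Q$ at all, since $\phi_{|I|^{-1}}\ast(F_{\tau,I}\ind_Q)$ is spread at scale $|I|^{-1}\ge|Q|$.
\item Your bad functions $F_{\tau,I}\ind_Q$ are inconsistent with your definition of $G_{\tau,I}$. The correct pieces are the mean-zero functions $(F_{\tau,I}-\langle F_{\tau,I}\rangle_Q)\ind_Q$, and the paper uses their cancellation.
\end{itemize}

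\textbf{The missing ingredient.} What you need is the actual structure of the Tao--Wright argument, which the paper reruns with $\ell^2_\tau$-valued functions. Every step there is an $L^2$ orthogonality estimate. In the paper's notation, with $J$ your $Q$:

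\begin{itemize}
\item For $|I||J|\le 1$, cancellation gives $|b_{J;\tau,I}\ast\phi_{|I|^{-1}}|\lesssim\phi_{|I|^{-1}}\ast(|I||J|\langle|b_{J;\tau,I}|\rangle_J\ind_J)$. One then uses $L^2(\ell^2_\tau)$, orthogonality over $I$, and Fefferman--Stein.
\item For $|I||J|>1$, the part on $\R\setminus 2J$ is again put into $L^2$.
\item The local part is split via $T_{m_{\tau,I}}=T_{m_{\tau,I}}P_{J,I}+T_{m_{\tau,I}}Q_{J,I}$, where $P_{J,I}=\mathrm{Mod}_{\xi_I}(\psi_{|J|}\ast\cdot)\mathrm{Mod}_{-\xi_I}$ keeps a frequency window of width $|J|^{-1}$ around the singularity.
\item The $P$-part is pointwise $\lesssim\M(\ind_J)^{10}\langle F_{\tau,I}\rangle_J$ and goes into $L^2$.
\item In the $Q$-part the singularity at $\xi_{\tau,I}$ has been removed at frequency scale $|J|^{-1}$. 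Tao--Wright's weighted $L^2$ argument off $\bigcup 2J$ then bounds it by $\alpha^{-1}\sum_J|J|(\sum_\tau N_\tau\sum_I\langle F_{\tau,I}\rangle_J^2)^{1/2}\lesssim\alpha^{-1}\sum_J|J|\langle\mathsf F\rangle_J$.
\end{itemize}

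\textbf{Your fallback.} ``Follow Tao--Wright verbatim'' is in fact the paper's route. However, you justify it by saying the overlap is used only in the good-part $L^2$ step, and that is false: $N_\tau$ enters every one of the bad-part estimates above, including the main $Q$-term. The vector-valued extension works because each of these is a Hilbert-space $L^2$ bound. In such a bound different $\tau$ are orthogonal, and $N_\tau$ enters only through orthogonality over $I\in\mathcal I_\tau$. That is the check the proof has to carry out, and your proposal does not identify it.
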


The proof adapts the arguments in \cite{TW}*{Proposition 5.1} to the Hilbert-space-valued setting; we sketch the necessary modifications below.

\begin{proof}[Proof of Proposition~\ref{prp:vvaluedTW}] As in the proof of \cite{TW}*{Proposition~5.1}, we write
\[
\vec f\coloneqq \{f_{\tau,I}\}_{\tau,I},\qquad f_{\tau,I} \coloneqq a_{\tau,I}\,(F_{\tau,I} \ast \phi_{|I|^{-1}}),
\qquad \|a_{\tau,I}\|_{L^\infty}\lesssim 1. 
\]
Thus it suffices to control the operator
\[
\widetilde{\mathcal{T}}(\vec f)(x)
\coloneqq \left\{ \sum_{I\in\mathcal I_\tau}
T_{m_{\tau,I}}\left(a_{\tau,I}(F_{\tau,I}\ast\phi_{|I|^{-1}})\right)(x)
\right\}_{\tau} .
\]
To this end, introduce the scalar square function
\[
\mathsf F(x)\coloneqq \left( \sum_{\tau}\sum_{I\in\mathcal I_\tau}
N_\tau\,|F_{\tau,I}(x)|^2 \right)^{1/2}.
\]
The desired estimate is
\[
\left\| \widetilde{\mathcal{T}}(\vec f) \right\|_{L^{1,\infty}(\ell^2_\tau)} \lesssim \|\mathsf F\|_{L^1}.
\] 

We now fix a level $\alpha>0$ and perform the Calder\'on--Zygmund decomposition of the scalar function
$\mathsf F$:
\[
\mathsf F = g + \sum_{J} b_J,
\]
where $\{J\}_J$ is the family of maximal dyadic intervals such that $\langle \mathsf F \rangle_J >\alpha$. Note that if
\[
S\coloneqq \bigcup_J J,\qquad G\coloneqq \R\setminus S,
\]
then
\[
|S|=\sum_J |J| \lesssim \frac{1}{\alpha}\|\mathsf F\|_{L^1(\R)}.
\]
For each $(\tau,I)$ we write the Calder\'on--Zygmund decomposition
\[
F_{\tau,I} = g_{\tau,I} + \sum_J b_{J;\tau,I},
\]
where 
\[
g_{\tau,I}\coloneqq F_{\tau,I} \ind_G+\sum_J \langle F_{\tau,I}\rangle_J\ind_J  ,\qquad b_{J;\tau,I}\coloneqq (F_{\tau,I}-\langle F_{\tau,I}\rangle_J)\,\ind_J .
\]
By definition of the bad intervals we have
\[
\mathsf F(x) \ind_G (x) \leq \alpha.
\]
Therefore, we have the pointwise estimate
\begin{align*}
\left(\sum_{\tau,I} N_\tau |g_{\tau,I}(x)|^2\right)^{\frac{1}{2}} & \leq \left(\sum_{\tau,I} N_\tau |F_{\tau,I}(x)|^2\right)^{\frac{1}{2}} \ind_G+\sum_J\left(\sum_{\tau,I} N_\tau \langle |F_{\tau,I}|\rangle_J ^2\right)^{1/2}\ind_J 
\\
& \leq \mathsf F\ind_G + \sum_J \left\langle \left(\sum_{\tau,I}N_\tau |F_{\tau,I}|^2\right)^{\frac{1}{2}} \right\rangle_J\ind_J \leq \mathsf F\ind_G+\sup_J \langle \mathsf F\rangle_J 
\end{align*} 
and so,
\begin{equation}\label{eq:calc}
 \left(\sum_{\tau,I} N_\tau |g_{\tau,I}(x)|^2\right)^{\frac{1}{2}} 
\lesssim \alpha.
\end{equation}
Now let
\[
\vec g \coloneqq \{g_{\tau,I}\}.
\]
For the good part, $\vec g$, we use the $N_\tau$-overlap of each family $\mathcal I_\tau$ and the $L^2(\R)$-boundedness of the operator $h\mapsto a_{\tau,I}(h\ast\phi_{|I|^{-1}})$ and of each $T_{m_{\tau,I}}$, to write
\begin{align*} 
\left|\left\{x\in\R:\, \left\| \widetilde{\mathcal{T}} (\vec g) \right\|_{\ell^2 _\tau}>\alpha \right\}\right|  &\leq\frac{1}{\alpha^2}\int_\R \left\| \widetilde{\mathcal{T}} (\vec g) \right\|_{\ell^2 _\tau} ^2  
 =\sum_{\tau} \left\| \sum_{I\in\mathcal I_\tau} T_{m_{\tau,I}}\left(a_{\tau,I}(g_{\tau,I}\ast\phi_{|I|^{-1}})\right)
\right\|_{L^2(\R)}^2  
\\
& \lesssim \sum_\tau N_\tau \sum_{I\in\mathcal I_\tau}\|g_{\tau,I}\|_{L^2(\R)}^2 
 \lesssim \alpha\|\mathsf F\|_{L^1(\R)},
\end{align*}
where we also used the calculation in \eqref{eq:calc} in passing to the last estimate.
	
We move to the analysis of the bad part by estimating the contribution of
\[
B\coloneqq \left(\sum_{\tau} \left| \sum_J\sum_{I\in\mathcal I_\tau} T_{m_{\tau,I}}\left(a_{\tau,I} \left(b_{J;\tau,I}\ast\phi_{|I|^{-1}} \right)\right) \right|^2\right)^{1/2}
\]
to the distribution function. We will prove the uniform estimate
\[
|\{x \in \R: \, B(x)>\alpha\}|
 \lesssim \alpha^{-1}\|\mathsf F\|_{L^1(\R)}.
\]
We split $B$ according to scales $|I||J|>1$ or $|I||J|\leq 1$, namely $B \le B^{\rm{sm}}+ B^{\rm{lrg}}$, where
\[
B ^{\rm{sm}}\coloneqq
\left(\sum_{\tau}
\left|\sum_J
\sum_{\substack{I\in\mathcal I_\tau\\ |I||J|\le 1}}
T_{m_{\tau,I}}\left(a_{\tau,I}\left(b_{J;\tau,I}\ast\phi_{|I|^{-1}}\right)\right)\right|^2\right)^{1/2}
\]
and
\[
B ^{\rm{lrg}}\coloneqq
\left(\sum_{\tau}
\left|\sum_J
\sum_{\substack{I\in\mathcal I_\tau\\ |I||J| > 1}}
T_{m_{\tau,I}}\left(a_{\tau,I}\left(b_{J;\tau,I}\ast\phi_{|I|^{-1}}\right)\right)\right|^2\right)^{1/2}.
\]
To handle $B^{\rm{sm}}$, note that by Chebyshev,
\[
\begin{split}
|\{B ^{\rm sm}>\alpha\}|
&\leq \alpha^{-2}
\sum_{\tau}
\left\| 
\sum_{\substack{I\in\mathcal I_\tau }}
T_{m_{\tau,I}}\left(\sum_{\substack{J:\, |I||J|\le 1}}a_{\tau,I}\left(b_{J;\tau,I}\ast\phi_{|I|^{-1}}\right)\right)
\right\|_{2}^{2} \\
&\lesssim\alpha^{-2}
\sum_{\tau} N_\tau
\sum_{\substack{I\in\mathcal I_\tau}}
\left\|\sum_{J:\, |I||J|\le 1}b_{J;\tau,I}\ast\phi_{|I|^{-1}}\right\|_2^2.
\end{split}
\]
Using the cancellation of each $b_{J;\tau,I}$ we obtain the estimate
\[
\left|\left(b_{J;\tau,I} \ast \phi_{|I|^{-1}}\right)(x)\right|\lesssim \left(\phi_{|I|^{-1}} \ast \left(|I||J|\langle |b_{J;\tau,I}|\rangle_J\ind_J\right)\right)(x).
\]
Using the last two displays and the Fefferman--Stein inequality \cite{CFef_St}, together with the fact that the stopping condition implies $\langle |b_{J;\tau,I}|\rangle_J \lesssim \langle |F_{\tau,I}|\rangle_J$, we get
\[
\begin{split}
|\{B ^{\rm sm}>\alpha\}| &\lesssim \alpha^{-2}\sum_\tau N_\tau\sum_J \sum_{\substack{I\in\mathcal I_\tau\\ |I||J|\le 1}} \left(|I||J|\langle |b_{J;\tau,I}|\rangle_J\right)^2 |J|
  \leq \alpha^{-2}\sum_J |J|\sum_\tau N_\tau \sum_{\substack{I\in\mathcal I_\tau\\ |I||J|\le 1}} \langle |b_{J;\tau,I}|\rangle_J ^2
\\
& \leq \alpha^{-2}\sum_J |J|\sum_\tau  \sum_{I\in\mathcal I_\tau} N_\tau \langle |F_{\tau,I}|\rangle_J^2  \lesssim \alpha^{-2}\sum_J|J| \langle \mathsf F\rangle_J ^2  \leq \sum_J |J|
 \\
& \lesssim \alpha^{-1}\|\mathsf F\|_{L^1(\R)}.
\end{split}
\] 

We move to the study of the term $B^{\rm{lrg}}$ corresponding to the large frequency intervals, which is the main term. We immediately split this term into a local and nonlocal piece
\[
B ^{\rm lrg} \leq B^{\rm loc}+B^{\rm nloc},
\]
where
\[
B^{\rm loc} \coloneqq \left(\sum_{\tau}
\left|\sum_J
\sum_{\substack{I\in\mathcal I_\tau\\ |I||J|>1 }}
T_{m_{\tau,I}}\left(a_{\tau,I}\ind_{2J}\left(b_{J;\tau,I}\ast\phi_{|I|^{-1}}\right)\right)\right|
^2\right)^{1/2}
\]
and
\[
B^{\rm nloc} \coloneqq \left(\sum_{\tau} \left|\sum_J \sum_{\substack{I\in\mathcal I_\tau\\ |I||J|>1 }} T_{m_{\tau,I}}\left(a_{\tau,I}\left(1-\ind_{2J}\right) \left(b_{J;\tau,I}\ast\phi_{|I|^{-1}}\right)\right)\right| ^2\right)^{1/2}.
\]
The nonlocal term $B^{\rm nloc}$ can be easily dealt with because on $\R\setminus 2J$ we have the pointwise estimate
\[
\left|\sum_J a_{\tau,I}(1-\ind_{2J})(b_{J;\tau,I}\ast\phi_{|I|^{-1}})\right|\lesssim \phi_{|I|^{-1}} \ast \left( \sum_J \langle|b_{J;\tau,I}|\rangle_J \ind_J\right)\lesssim  \phi_{|I|^{-1}}\ast \left(\sum_J \langle|F_{\tau,I}|\rangle_J \ind_J\right),
\]
which does not require cancellation. Thus we may bound $|\{B^{\rm nloc}>\alpha\}|$ by passing to $L^2(\R)$ and using the Fefferman--Stein inequality as before.

So we focus now on the local term for the large frequency intervals, namely the term $B^{\rm loc}$: this is a suitable adaptation of the arguments in \cite{TW}.

Fix a Schwartz function $\psi$ with $\widehat\psi\equiv 1$ on $[-1/4,1/4]$ and
$\supp\widehat\psi\subset[-1,1 ]$, and, as usual, we set
\[
\psi_{|J|}(x)\coloneqq |J|^{-1}\psi(x/|J|).
\]
For each $I$ choose any $\xi_I\in I$ and define
\[
P_{J,I}\coloneqq \mathrm{Mod}_{\xi_I} (\psi_{|J|}\ast\cdot) \mathrm{Mod}_{-\xi_I},
\qquad
Q_{J,I}\coloneqq \mathrm{Id} - P_{J,I}.
\]
Here, for $\xi \in \R$, $\mathrm{Mod}_{\xi}$ denotes the modulation operator given by
$\mathrm{Mod}_{\xi} h(x)\coloneqq e^{2\pi i \xi x}h(x)$, $x \in \R$.

Note that $P_{J,I}$ is a smooth frequency cutoff to a window of width $\sim |J|^{-1}$
around the frequency singularity $\xi_I$. We split
\[
T_{m_{\tau,I}}
=
T_{m_{\tau,I}}P_{J,I}+T_{m_{\tau,I}}Q_{J,I},
\]
and, replacing $T_{m_{\tau,I}}$ in the definition of $B^{\rm loc}$ by $T_{m_{\tau,I}}P_{J,I}$ and $T_{m_{\tau,I}}Q_{J,I}$, we get $B^{\rm loc} _P$ and $B^{\rm loc} _Q$, respectively. We then have
\[
B ^{\rm loc } \le B^{\rm loc} _{P} +B^{\rm loc} _{Q}.
\]

We first treat the term $B^{\rm loc} _{P}$. Using symbol estimates we have
\[
\left| P_{J,I}\left(a_{\tau,I}\ind_{2J}\left(b_{J;\tau,I}\ast\phi_{|I|^{-1}}\right)\right)\right|\lesssim \M(\ind_J)^{10}\langle|b_{J;\tau,I}  |\rangle_J\lesssim  \M(\ind_J)^{10}\langle|F_{\tau,I}  |\rangle_J.
\]
Thus the estimate for the term $B^{\rm loc} _{P}$ follows by arguing as before.

Finally, let us look at the term $B^{\rm loc}_{Q}$. Repeating verbatim the proof in the corresponding part of \cite{TW} one gets
\[
\left|\left\{B_Q ^{\rm loc}>\alpha \right\} \right|\leq\left|\bigcup_J 2J \right|+ \alpha^{-1}\sum_J |J|^{-\frac{1}{2}} \left(\sum_\tau N_\tau \sum_{I:\, |I||J|>1} |J|^4 \langle |b_{J;\tau,I}|\rangle_J ^2 |I(J)|\right)^{\frac{1}{2}},
\]
where $I(J)\subset I$ is an interval with $|I(J)|\sim |J|^{-1}$. We can then close this estimate since 
\[
\begin{split}
\sum_J |J|^{-\frac{1}{2}} \left(\sum_\tau N_\tau \sum_{I:\, |I||J|>1} |J|^4 \langle |b_{J;\tau,I}|\rangle_J ^2 |I(J)|\right)^{\frac{1}{2}}   &\lesssim \sum_J |J|\left(\sum_{\tau,I}N_\tau \langle |F_{\tau,I}|\rangle_J ^2\right)^{\frac{1}{2}} 
 \lesssim \sum_J |J| \langle \mathsf F\rangle_J 
\\
& \lesssim \sum_J |J| \alpha   \lesssim \|\mathsf F\|_{L^1(\R)}.\qedhere
\end{split}
\]
\end{proof}

In the scalar case, presented in \S~\ref{sec:gl_R}, Proposition~\ref{prp:TW} was combined with the implicit square function characterization of $L\log^{\frac{\sigma}{2}}L$ spaces, Proposition~\ref{prp:weakSF}, together with a Calder\'on--Zygmund decomposition at the Orlicz scale, to yield the global modular endpoint estimate for $\mathcal R_2$--multipliers. Indeed, the implicit square function characterization of $L\log^{1/2}L$ requires cancellation in order to deal with the Littlewood--Paley intervals $[\pm 2^{k_I},\pm 2^{k_I+1}]$ when working with atoms supported on some interval $J$ with $|J|2^{k_I}<1$. However, we saw that this can be dealt with via the Calder\'on--Zygmund decomposition obtained in terms of Orlicz stopping conditions $\langle |f|\rangle_{\Phi_{1/2},J}>\alpha$. 

In the present vector-valued setup, Proposition~\ref{prp:TW} will be replaced by Proposition~\ref{prp:vvaluedTW}. Additionally, we will use a vector-valued Calder\'on--Zygmund decomposition relying on the Fefferman--Stein endpoint estimate for the Orlicz maximal operator $\M_{\Phi_{1/2}}$. More generally, the following version of the Fefferman--Stein inequality for Orlicz maximal operators holds. This is probably known, but we include the proof for completeness.

\begin{proposition}\label{prp:OrliczFS}
Let $\Phi_\sigma(t)=t[\log(e+t)]^\sigma$, $\sigma\geq 0$, and let $\M_{\Phi_\sigma}$ be the associated Orlicz maximal operator. Then
\[
\left|\left\{x\in\R^n:\, \left(\sum_j |\M_{\Phi_\sigma} f_j(x)|^2\right)^{1/2}>\alpha\right\}\right| \lesssim_{\sigma,n} \int_{\R^n}\Phi_\sigma\left(\frac{\left(\sum_j|f_j(x)|^2\right)^{1/2}}{\alpha}\right)\,\d x.
\]
\end{proposition}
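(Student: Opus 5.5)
The plan is to reduce the Orlicz vector-valued inequality to the classical Fefferman--Stein $L^1\to L^{1,\infty}(\ell^2)$ inequality for $\M$ by a standard level-set decomposition. First, by homogeneity we may take $\alpha=1$: replacing $f_j$ by $f_j/\alpha$ scales both sides correctly. We may also pass to dyadic maximal operators. Using the one-third trick, $\M_{\Phi_\sigma}$ is pointwise bounded by a sum of $3^n$ operators $\M_{\mathcal D^{(i)},\Phi_\sigma}$ over shifted dyadic grids, since the Luxemburg average over a cube $Q$ is dominated by a constant times the average over a comparable dyadic cube of some shifted grid. This domination holds because of the doubling-type property $\Phi_\sigma(t/C)\le \Phi_\sigma(t)/C$ from \eqref{eq:convex-scale}. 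So we argue for a single dyadic grid. Write $F\coloneqq (\sum_j|f_j|^2)^{1/2}$.

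Next we split each $f_j$ at height $1$ relative to $F$. Set $f_j=f_j\ind_{\{F\le 1/2\}}+f_j\ind_{\{F>1/2\}}\eqqcolon g_j+h_j$. Since $\M_{\mathcal D,\Phi_\sigma}$ is sublinear, $(\sum_j|\M g_j|^2)^{1/2}$ and $(\sum_j|\M h_j|^2)^{1/2}$ control the original square function by the triangle inequality in $\ell^2$.

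For the good part, $\sum_j|g_j|^2\le F^2\ind_{\{F\le1/2\}}$. Pointwise we have $\langle |g_j|\rangle_{\Phi_\sigma,Q}\lesssim_\sigma \langle|g_j|\rangle_{Q}\,\log^\sigma(e+1/\langle |g_j|\rangle_Q)$, which is crude. A cleaner route is to use that, for functions bounded by $1$, $\M_{\Phi_\sigma}$ is bounded on $L^2$, hence on $L^2(\ell^2)$ via the Fefferman--Stein $L^2$ vector-valued bound for the majorant $\M_{\Phi_\sigma}\lesssim \M_{\mathcal D}(|\cdot|^{r})^{1/r}$ with $1<r<2$. This gives
\[
\Big|\Big\{\big(\textstyle\sum_j|\M_{\Phi_\sigma} g_j|^2\big)^{1/2}>\tfrac12\Big\}\Big|\lesssim \int F^2\ind_{\{F\le 1/2\}}\le \int F\lesssim\int\Phi_\sigma(F).
\]

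The bad part $h_j$ is the main obstacle. Here $\Phi_\sigma$ genuinely differs from $t$, and we need an Orlicz substitute for the $\ell^2$ Calder\'on--Zygmund decomposition. The approach I would try first is the extrapolation-type identity available for dyadic Orlicz maximal functions: for a nonnegative $w$,
\[
\langle w\rangle_{\Phi_\sigma,Q}\lesssim_\sigma \langle \Phi_\sigma(w)\rangle_Q \quad\text{whenever } \langle w\rangle_{\Phi_\sigma,Q}\ge 1,
\]
which follows from Proposition~\ref{prp:normequiv}(ii) applied after normalising. The quantity $\sum_j \langle|h_j|\rangle_{\Phi_\sigma,Q}^2$ is not directly of this form, however. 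So we linearise instead. At a point $x$ with $\|\{\M h_j(x)\}\|_{\ell^2}>1/2$, choose the cubes $Q_j\ni x$ nearly attaining the suprema and use the dual (Orlicz--Hölder) representation $\langle|h_j|\rangle_{\Phi_\sigma,Q}\sim\sup\{\langle |h_j|\varphi\rangle_Q:\langle\varphi\rangle_{\exp L^{1/\sigma},Q}\le1\}$. Writing $|h_j|=F\ind_{\{F>1/2\}}\cdot(|h_j|/F)$ with $\sum_j(|h_j|/F)^2\le1$, the goal is to bound the square function by $\M_{\Phi_\sigma}$ applied to the single scalar function $F\ind_{\{F>1/2\}}$, pulling the unit-$\ell^2$ factors out through a vector-valued Fefferman--Stein step with $\exp L$ weights. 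If this works, the scalar weak-type bound
\[
|\{\M_{\mathcal D,\Phi_\sigma}G>c\}|\lesssim\int\Phi_\sigma(G/c),
\]
proved by selecting maximal dyadic cubes with $\langle G\rangle_{\Phi_\sigma,Q}>c$ and using the displayed modular comparison, closes the argument with $G=F\ind_{\{F>1/2\}}$.

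Failing that, I would fall back on a direct Calder\'on--Zygmund argument. Stop on maximal dyadic cubes where $\langle F\rangle_{\Phi_\sigma,Q}>c$, which have total measure $\lesssim\int\Phi_\sigma(F)$. Off their union, each $\langle|f_j|\rangle_{\Phi_\sigma,Q}$ only involves cubes where $F$ is $\Phi_\sigma$-small, so there one can replace $\Phi_\sigma$-averages by $L^r$ averages and conclude with the $L^2(\ell^2)$ bound as in the good part.
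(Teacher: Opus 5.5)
There is a genuine gap. Your reductions and your good part are fine. Splitting by the values of $F$, however, reduces nothing: $h_j=f_j\ind_{\{F>1/2\}}$ is as general as $f_j$, so the bad part is the whole proposition, and neither of your routes for it works.

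\textbf{The first route aims at a false inequality.} You want to dominate $(\sum_j|\M_{\Phi_\sigma}h_j|^2)^{1/2}$ by $\M_{\Phi_\sigma}H$, where $H=(\sum_j|h_j|^2)^{1/2}$. This fails for every $\sigma\ge 0$, pointwise and even in distribution. Take $h_j=\ind_{[2^j,2^{j+1})}$ for $1\le j\le N$.
\begin{itemize}
\item For every $x\in[0,1)$ and every $j$, $\M_{\Phi_\sigma}h_j(x)\ge\langle h_j\rangle_{[0,2^{j+1})}=1/2$. This uses that the Luxemburg average dominates the $L^1$ average, since $\Phi_\sigma(t)\ge t$. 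Hence the square function is $\ge\sqrt N/2$ on $[0,1)$.
\item On the other hand $H=\ind_{[2,2^{N+1})}$, which is admissible since $H>1/2$ on its support, satisfies $\M_{\Phi_\sigma}H\lesssim_\sigma 1$ everywhere.
\end{itemize}
Each $j$ selects its own cube, so no Orlicz--H\"older duality can pull the unit-$\ell^2$ factors out. This is exactly why Fefferman--Stein is not a corollary of the scalar weak-type bound.

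\textbf{The fallback has the right skeleton but skips the crux.} Stopping on maximal dyadic $Q$ with $\langle F\rangle_{\Phi_\sigma,Q}>\alpha$ and setting $\Omega=\bigcup Q$ is exactly what the paper does. The step ``off the union, replace $\Phi_\sigma$-averages by $L^r$-averages and use $L^2(\ell^2)$'' fails. A cube $R\ni x$ with $x\notin\Omega$ may contain stopping cubes $Q$ on which the $f_j$ are huge and have no controlled $L^2$ norm. The scalar information $\langle F\rangle_{\Phi_\sigma,R}\le\alpha$ on each single $R$ gives no $L^2(\ell^2)$ bound.

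The missing idea is to take $f_j\ind_{\Omega^c}$ as the good part (there $F\le\alpha$). Then replace $b_j=f_j\ind_\Omega$ by the bounded proxy $\widetilde f_j=\sum_Q\langle|f_j|\rangle_{\Phi_\sigma,Q}\ind_Q$, using two facts.
\begin{enumerate}
\item By submultiplicativity \eqref{eq:submult}, $\int_Q\Phi_\sigma(|f_j|/\rho)\lesssim_\sigma\Phi_\sigma(\langle|f_j|\rangle_{\Phi_\sigma,Q}/\rho)\,|Q|$ for every $\rho>0$. Sum this over the stopping cubes $Q\subset R$ and absorb the constant via \eqref{eq:convex-scale}. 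This gives $\langle|b_j|\rangle_{\Phi_\sigma,R}\lesssim_\sigma\langle\widetilde f_j\rangle_{\Phi_\sigma,R}$ whenever every stopping cube meeting $R$ lies inside $R$. Hence $\M_{\mathcal D,\Phi_\sigma}b_j\lesssim\M_{\mathcal D,\Phi_\sigma}\widetilde f_j$ off $\Omega$; the paper works off $\Omega^\ast=\bigcup 3Q$.
\item The lower $\ell^2$-estimate for $L\log^\sigma L$ gives $(\sum_j\langle|f_j|\rangle_{\Phi_\sigma,Q}^2)^{1/2}\lesssim_\sigma\langle F\rangle_{\Phi_\sigma,Q}\le 2^n\alpha$, the last step by maximality and convexity. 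Therefore $\int\sum_j|\widetilde f_j|^2\lesssim\alpha^2|\Omega|$.
\end{enumerate}
Only after these two facts do the scalar $L^2$ bound for $\M_{\Phi_\sigma}$ and Chebyshev's inequality close the argument.
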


\begin{proof} It suffices to prove the desired inequality for the dyadic maximal operator $\M_{\mathcal D,\Phi_\sigma}$. To that end, set $F\coloneqq (\sum_j|f_j|^2)^{1/2}$. Apply the Calder\'on--Zygmund decomposition associated with $\M_{\mathcal D,\Phi_\sigma}$ to $F$ at height $\alpha$. This produces a pairwise disjoint collection of maximal dyadic cubes $\mathcal Q=\{Q\}$ with 
\[
\Omega\coloneqq\{\M_{\mathcal D,\Phi_\sigma}F>\alpha\}=\bigcup_{Q\in\mathcal Q}Q,\qquad \langle F\rangle_{\Phi_\sigma,Q}\simeq_n\alpha \quad \forall Q\in\mathcal Q,\qquad |\Omega|\lesssim\int\Phi_\sigma\left(\frac F\alpha\right).
\]
Now we split $f_j=g_j+b_j$ with $g_j \coloneqq f_j\ind_{\Omega^c}$ and $b_j \coloneqq f_j\ind_\Omega$. Since $F\leq\alpha$ a.e.\ on $\Omega^c$, the $L^2$-estimate for the scalar $\M_{\Phi_\sigma}$ from \cite{CUP12}*{Proposition~A.1} gives
\[
\left\|\left(\sum_j|\M_{\mathcal D,\Phi_\sigma}g_j|^2\right)^{1/2}\right\|_2^2\lesssim_{\sigma,n}\int_{\Omega^c}F^2\leq\alpha\int_{\Omega^c}F,
\]
so by Chebyshev's inequality $|\{(\sum_j|\M_{\mathcal D,\Phi_\sigma}g_j|^2)^{1/2}>\alpha/2\}|\lesssim\int\Phi_\sigma(F/\alpha)$.

For the bad part, set $\widetilde f_j=\sum_{Q\in\mathcal Q}\langle|f_j|\rangle_{\Phi_\sigma,Q}\ind_Q$. By Minkowski's inequality for Orlicz spaces, we have 
\[
\left(\sum_j\left\langle|f_j|\right\rangle_{\Phi_\sigma,Q}^2\right)^{1/2}\lesssim_\sigma\langle F\rangle_{\Phi_\sigma,Q}\lesssim\alpha
\]
for every $Q\in\mathcal Q$; hence $(\sum_j|\widetilde f_j|^2)^{1/2}\lesssim\alpha\ind_\Omega$, and therefore $\int\sum_j|\widetilde f_j|^2\lesssim\alpha^2|\Omega|$.

Now let $\Omega^\ast\coloneqq \bigcup_{Q\in\mathcal Q}3Q$, so $|\Omega^\ast|\lesssim|\Omega|\lesssim\int\Phi_\sigma(F/\alpha)$. We claim that
\begin{equation}\label{eq:pointwise}
\M_{\mathcal D,\Phi_\sigma}b_j(x)\lesssim_{\sigma,n}\M_{\mathcal D,\Phi_\sigma}\widetilde f_j(x),\qquad x\notin\Omega^\ast.
\end{equation}
Indeed, fix such an $x$ and any dyadic $R\ni x$; if $R$ meets no $Q\in\mathcal Q$ there is nothing to prove, so suppose $R\cap Q\neq\varnothing$ for some $Q\in\mathcal Q$. Then $x\notin 3Q$ forces $Q\subset R$.

Set $\alpha_0\coloneqq\langle|f_j|\rangle_{\Phi_\sigma,Q}$. By definition of the Luxemburg norm, $\int_Q\Phi_\sigma(|f_j|/\alpha_0)\le|Q|$. Also, the submultiplicativity of $\Phi_\sigma$ gives $\Phi_\sigma(|f_j|/\rho)\lesssim_\sigma\Phi_\sigma(\alpha_0/\rho)\,\Phi_\sigma(|f_j|/\alpha_0)$ pointwise on $Q$ for any $\rho>0$. Integrating over $Q$, we get that for every $\rho>0$, there holds
\begin{equation}\label{eq:basic}
\int_Q\Phi_\sigma\left(\frac{|f_j|}{\rho}\right)\lesssim_\sigma\Phi_\sigma\left(\frac {\alpha_0}{\rho}\right)\int_Q\Phi_\sigma\left(\frac{|f_j|}{\alpha_0}\right)\le\Phi_\sigma\left(\frac{\alpha_0}{\rho}\right)|Q|,
\end{equation}
using the bound on $\alpha_0$ above. Now choose $\rho=\rho_0\coloneqq\langle \alpha_0\ind_Q\rangle_{\Phi_\sigma,R}$. By the definition of the Luxemburg norm, we have
\[
\frac{1}{|R|}\int_R\Phi_\sigma\left(\frac{\alpha_0\ind_Q(x)}{\rho_0}\right)\,\d x\le1.
\]
Also, since $\alpha_0\ind_Q$ vanishes outside $Q$, 
\[
1\geq \frac{1}{|R|}\int_R\Phi_\sigma\left(\frac{\alpha_0\ind_Q(x)}{\rho_0} \right)\,\d x=\frac{1}{|R|}\int_Q\Phi_\sigma\left(\frac{\alpha_0\ind_Q (x)}{\rho_0}\right)\,\d x=\frac{|Q|}{|R|}\,\Phi_\sigma\left(\frac{\alpha_0}{\rho_0}\right)\implies 
\Phi_\sigma\left(\frac{\alpha_0}{\rho_0}\right)\le\frac{|R|}{|Q|}.
\]
Plugging the inequality above into \eqref{eq:basic} yields
\[
\int_Q\Phi_\sigma\left(\frac{|f_j|}{\rho_0}\right)\lesssim_\sigma\Phi_\sigma\left(\frac{\alpha_0}{\rho_0}\right)|Q|\le |R|,
\]
so $\frac{1}{|R|}\int_Q\Phi_\sigma(|f_j|/\rho_0)\leq C_\sigma$ for some constant $C_\sigma>1$. Now \eqref{eq:convex-scale} implies that $\Phi_\sigma(t/C_\sigma)\le\Phi_\sigma(t)/C_\sigma$. Thus,
\[
\frac{1}{|R|}\int_Q\Phi_\sigma\left(\frac{|f_j|}{C_\sigma\rho_0}\right)\,\d x\le\frac{1}{C_\sigma}\frac{1}{|R|}\int_Q\Phi_\sigma\left(\frac{|f_j|}{\rho_0}\right)\,\d x\le1,
\]
which is exactly the statement $\langle |f_j|\ind_Q\rangle_{\Phi_\sigma,R}\le C_\sigma\rho_0=C_\sigma \alpha_0\langle\ind_Q\rangle_{\Phi_\sigma,R}$ by the homogeneity of the Luxemburg norm. We conclude
\[
\langle |f_j|\ind_Q\rangle_{\Phi_\sigma,R}\lesssim_\sigma \alpha_0\langle\ind_Q\rangle_{\Phi_\sigma,R}.
\]
Since $b_j=\sum_{Q\in\mathcal Q} f_j \ind_Q$, summing the estimate above over the disjoint cubes $Q\subset R$ yields 
\[
   \langle |b_j| \rangle_{\Phi_\sigma,R}\lesssim_\sigma\langle|\widetilde f_j|\rangle_{\Phi_\sigma,R}.
\]
Since $R\ni x$ was arbitrary, taking the supremum over $R$ gives the claimed pointwise bound \eqref{eq:pointwise}.

Now the scalar  $L^2-L^2$ estimate applied to $\widetilde f_j$ and Chebyshev's inequality immediately give
\[
\left|\left\{x\notin\Omega^\ast:\, \left( \sum_j|\M_{\mathcal D,\Phi_\sigma}b_j(x)|^2\right)^{1/2}>\frac\alpha2\right\}\right|\lesssim\frac{1}{\alpha^2}\int\sum_j|\widetilde f_j|^2\lesssim|\Omega|\lesssim\int\Phi_\sigma(F/\alpha).
\]
Combining the estimates for $g_j$ and $b_j$ proves the claim.
\end{proof}

The other main ingredient needed to combine with Proposition~\ref{prp:vvaluedTW} is the following vector-valued version of the implicit square function characterization of $L\log^{\sigma/2}L$, whose proof requires only a small adjustment of the argument in \cite{BCPV}*{Corollary E}.

\begin{proposition}\label{prp:weakSFvv} Let $\sigma$ be a nonnegative integer, $J\subset \R$ a bounded interval, and let $\{f_\tau\}_\tau$ be a sequence of functions supported in $J$ and such that
\[
 F\coloneqq \left(\sum_\tau |f_\tau|^2\right)^{\frac{1}{2}}\in L\log^{\frac{\sigma+1}{2}}L(J).
\]
If $2^k\geq |J|^{-1}$, there exist functions $\{f_{\tau,k}\}_{\tau,k}$ with $\supp f_{\tau,k}\subset 4J$ such that 
\[
 |S_k f_\tau|\lesssim \phi_{2^{-k}}\ast f_{\tau,k},\qquad \left\| \left(\sum_{\tau,\, 2^k\geq |J|^{-1}}|f_{\tau,k}|^2\right)^{\frac{1}{2}}\right\|_{L\log^{\frac{\sigma}{2}}L(J)}\lesssim \|F\|_{L\log^{\frac{\sigma+1}{2}}L(J)}.
\]
If in addition $\int f_\tau=0$ for all $\tau$, then, for $2^k< |J|^{-1}$, there exist functions $\{f_{\tau,k}\}_{\tau,k}$ satisfying $|S_k f_{\tau}|\lesssim \phi_{2^{-k}}\ast f_{\tau,k}$, and for every $\alpha>0$
\[
\int_{\R} \Phi_{\frac{\sigma}{2}}\left(\alpha ^{-1}\left(\sum_\tau \sum_{2^k<|J|^{-1}}|f_{\tau,k}|^2\right)^{\frac{1}{2}}\right)\lesssim \frac{1}{|J|} \int_J \Phi_{\frac{\sigma}{2}}\left(\frac{|F|}{\alpha}\right).
\]
In this case, i.e. for $2^k<|J|^{-1}$, we do not claim anything about the support of the functions $f_{\tau,k}$.
\end{proposition}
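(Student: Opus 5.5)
The plan is to follow the scalar proof of Proposition~\ref{prp:weakSF} step by step, replacing every scalar quantity by its $\ell^2_\tau$-norm.

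\emph{Reduction for high frequencies.} As in the scalar case, first reduce to $\supp f_\tau\subset[0,1]$ and $2^k\ge 1$, by translating, dilating and splitting into $O(1)$ unit intervals. All of these operations act diagonally in $\tau$, so they commute with taking $\ell^2_\tau$-norms. After rescaling, the support condition $\supp f_{\tau,k}\subset 4J$ follows exactly as before.

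\emph{Normalized case via duality.} In the normalized case we need a dual, vector-valued form of the Chang--Wilson--Wolff inequality. The scalar construction in \cite{BCPV} goes as follows:
\begin{itemize}
\item The Chang--Wilson--Wolff inequality is written as an exponential-square (or large-$p$) estimate controlling a function by its dyadic square function.
\item By Orlicz duality, $\exp(L^{2/(\sigma+1)})$ is dual to $L\log^{(\sigma+1)/2}L$.
\item This yields a dual inequality bounding $\sum_k \int |S_k f|\, w_k$ by $\|f\|_{L\log^{(\sigma+1)/2}L}$ times an $\exp(L^{2/\sigma})$-type norm of $(\sum_k |w_k|^2)^{1/2}$.
\item The functions $f_k$ are then produced by a Hahn--Banach (or minimax) argument from this dual inequality.
\end{itemize}

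For the vector-valued version, apply the same duality on the Hilbert space $\ell^2_{\tau,k}$. The needed estimate is
\[
\sum_{\tau,k}\int |S_kf_\tau|\,w_{\tau,k}\lesssim \|F\|_{L\log^{\frac{\sigma+1}{2}}L}\,\Big\|\Big(\sum_{\tau,k}|w_{\tau,k}|^2\Big)^{1/2}\Big\|_{\exp L^{2/\sigma}}.
\]
To prove it, pair $f_\tau$ against $g_\tau=\sum_k S_k^\ast(\text{localized } w_{\tau,k})$. It then suffices to bound $\|(\sum_\tau|g_\tau|^2)^{1/2}\|_{\exp L^{2/(\sigma+1)}}$ by the $\exp L^{2/\sigma}$-norm of the square function of $w$.

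This is where the main obstacle lies: we need the Chang--Wilson--Wolff bound for $\ell^2$-valued martingales. It should follow from two facts. First, the Chang--Wilson--Wolff inequality holds in weighted $L^2$ with weights comparable to $\M$ of the square function, so it extends to Hilbert-space-valued martingales; alternatively one can use the good-$\lambda$/exponential estimate for Hilbert-space-valued martingales, whose constants are dimension-free. Second, the $p\to\infty$ iteration in \cite{BCPV}, which passes from $\sigma$ to $\sigma+1$ half-powers of the logarithm, only uses the $L^p$ growth $\sqrt p$ of these estimates and Fefferman--Stein-type bounds, and both remain valid in the $\ell^2$-valued setting. I would verify this by checking that each step in \cite{BCPV}*{pp.~1281--1284} uses only Hilbert-space-valued inequalities.

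\emph{Low frequencies.} For $2^k<|J|^{-1}$ and mean-zero $f_\tau$, set $f_{\tau,k}\coloneqq\widetilde S_kf_\tau$. The mean-zero condition gives
\[
|f_{\tau,k}|\lesssim \langle|f_\tau|\rangle_J\, 2^k|J|\,(\omega_{2^{-k}}\ast\ind_J).
\]
Hence, with $H$ as in \eqref{eq:Gfactor},
\[
\Big(\sum_{\tau,k}|f_{\tau,k}|^2\Big)^{1/2}\lesssim \Big(\sum_\tau\langle|f_\tau|\rangle_J^2\Big)^{1/2}H\le \langle F\rangle_J\,H,
\]
where the last step is Minkowski's integral inequality. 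From here the convexity and Jensen argument of the scalar proof applies verbatim, with $\langle|f|\rangle_J$ replaced by $\langle F\rangle_J$, and yields the stated modular bound.
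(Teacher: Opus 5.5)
\textbf{There is a genuine gap in the high-frequency duality step.} Your reduction to $[0,1]$ matches the paper, as does your low-frequency argument via $(\sum_\tau\langle|f_\tau|\rangle_J^2)^{1/2}\le\langle F\rangle_J$. Deriving a vector-valued Chang--Wilson--Wolff inequality from its weighted $L^2$ form is also what the paper does. The problem is the inequality you single out as the needed estimate:
\[
\sum_{\tau,k}\int |S_kf_\tau|\,w_{\tau,k}\lesssim \|F\|_{L\log^{\frac{\sigma+1}{2}}L}\,\Bigl\|\Bigl(\sum_{\tau,k}|w_{\tau,k}|^2\Bigr)^{1/2}\Bigr\|_{\exp L^{2/\sigma}}\quad\text{for all } w .
\]
This is false. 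Taking the supremum over $w$ shows it is equivalent to the \emph{explicit} bound $\|(\sum_{\tau,k}|S_kf_\tau|^2)^{1/2}\|_{L\log^{\sigma/2}L(J)}\lesssim\|F\|_{L\log^{(\sigma+1)/2}L(J)}$. Take $\sigma=0$ (so $\exp L^{2/\sigma}$ means $L^\infty$), a single function, $J=[0,1]$ and $f=\delta^{-1}\ind_{[0,\delta]}$. The smooth square function is $\gtrsim |x|^{-1}$ on a fixed proportion of each dyadic shell in $(\delta,1)$, so the left side is $\gtrsim\log(1/\delta)$. Meanwhile $\|f\|_{L\log^{1/2}L}\simeq\log^{1/2}(1/\delta)$. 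This failure is exactly why the characterization has to be implicit.

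Your mechanism breaks for the same reason. Chang--Wilson--Wolff bounds $g_\tau$ by its own square function, and for $g_\tau=\sum_k S_k^\ast u_{\tau,k}$ with unrestricted $u_{\tau,k}$ that square function is not controlled by $(\sum_k|u_{\tau,k}|^2)^{1/2}$. Put differently, the majorant condition $|S_kf|\lesssim\phi_{2^{-k}}\ast f_k$ only dualizes to inequalities of the form $\sum_k\int|S_kf|\,v_k\lesssim\|f\|\,\|(\sum_k|\phi_{2^{-k}}\ast v_k|^2)^{1/2}\|$. That right-hand side can be far larger than $\|(\sum_k|v_k|^2)^{1/2}\|$.

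\textbf{What the paper does instead, following \cite{BCPV}*{Theorem D}, is to dualize at the martingale level.} One minimizes $\|(\sum_{\tau,\ell}|f_{\tau,\ell}|^2)^{1/2}\|_{L\log^{\sigma/2}L}$ subject to $\mathbb D_\ell f_{\tau,\ell}=\mathbb D_\ell f_\tau$. The annihilator of this constraint consists of sequences with $g_{\tau,\ell}=\mathbb D_\ell g_{\tau,\ell}$. Hence $G_\tau\coloneqq\sum_\ell g_{\tau,\ell}$ has martingale square function exactly $(\sum_\ell|g_{\tau,\ell}|^2)^{1/2}$. The vector-valued Chang--Wilson--Wolff inequality with constant $Cp^{1/2}$, combined with the $L^p$-growth description of exponential classes, then gives
\[
\Bigl\|\Bigl(\sum_\tau|G_\tau|^2\Bigr)^{1/2}\Bigr\|_{\exp L^{2/(\sigma+1)}}\lesssim\Bigl\|\Bigl(\sum_{\tau,\ell}|g_{\tau,\ell}|^2\Bigr)^{1/2}\Bigr\|_{\exp L^{2/\sigma}},
\]
and this closes the Hahn--Banach argument.

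A second, separate step is then needed to pass from $\mathbb D_\ell$ to $S_k$. This is the translation-averaging argument of \cite{TW}*{Proposition 4.1}, written out in the treatment of $\vec T_>$ in the proof of Theorem~\ref{thm:multiparam}. It expands $S_k\mathbb D_\ell f_\tau$ in Haar functions and averages over shifts $\theta\in[-1/3,1/3]$, giving
\[
|S_kf_\tau|\lesssim\phi_{2^{-k}}\ast\Bigl(\sum_{\ell\ge0}2^{-|\ell-k|/2}\int_{-1/3}^{1/3}|g^\theta_{\tau,\ell}|\,\d\theta\Bigr),
\]
where $g^\theta_{\tau,\ell}$ are the martingale-level functions built for the translates of $f_\tau$. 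Schur's test in $(k,\ell)$ and Minkowski's inequality in $\theta$ then transfer the square-function bound. This step is where the slowly decaying majorant $\phi_{2^{-k}}$ and the enlarged support come from, and your plan has no substitute for it.
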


\begin{proof} We may assume that $J=[0,1]$. The proof relies on the vector-valued Chang--Wilson--Wolff inequality:
\[
\left\| \left(\sum_\tau |f_\tau-\mathbb E_0 f_\tau|^2\right)^{\frac{1}{2}}\right\|_{L^p([0,1])} \leq C p^{\frac{1}{2}} \left\|\left(\sum_{\tau,k}|\mathbb D_k f_{\tau}|^2\right)^{\frac{1}{2}} \right\|_{L^p([0,1])},\qquad p>2.
\]
This follows in turn from the corresponding $L^2(w)$ estimate, with constant $C[w]_{A_\infty}^{\frac{1}{2}}$; see \cite{WilsonBook}*{Theorem~3.4}.

The rest of the proof follows the duality argument from the proof of \cite{BCPV}*{Theorem D}. Indeed, one constructs functions $f_{\tau,k}$ such that $\mathbb D_k f_\tau = f_{\tau,k}$ and such that
\[
\left\| \left(\sum_{\tau,k}|f_{\tau,k}|^2\right)^{\frac{1}{2}}\right\|_{L\log^{\frac{\sigma}{2}}L([0,1])}\lesssim \left\| \left(\sum_\tau |f_\tau|^2\right)^{\frac{1}{2}}\right\|_{L\log^{\frac{\sigma+1}{2}}L([0,1])}. 
\]
This gives the result for $k\geq 0$, while for $k<0$ it is enough to set $f_{\tau,k}\coloneqq S_k f_\tau$. Under the assumption $\int f_\tau=0$ for all $\tau$, we can estimate $f_{\tau,k}$ pointwise. The proof of the claimed estimate in this case is exactly the same as in the scalar proof of Proposition~\ref{prp:weakSF}.
\end{proof}

Combining the implicit square function characterization of $L\log^{\sigma/2}L$, namely Proposition~\ref{prp:weakSFvv}, with Proposition~\ref{prp:vvaluedTW} yields the following vector-valued endpoint estimate for $\mathcal R_2$--multipliers. For the definition of the class of one-parameter $\mathcal{R}_2$--multipliers, see Section~\ref{sec:R2n}, and in particular Definition~\ref{def:R2n_atoms_norm} for $n=1$.

\begin{theorem}\label{thm:vvaluedR_2} Let $\{m_\tau\}_\tau$ be a family of $\mathcal R_2$--multipliers on the real line with uniform $\mathcal R_2$--norm $\sup_{\tau}\|m_\tau\|_{\mathcal R_2}\leq 1$. Then
\[
 \left|\left\{x\in \R:\, \left(\sum_\tau |T_{m_\tau}f_\tau(x)|^2\right)^{\frac{1}{2}}
>\alpha\right\}\right|\lesssim \int_{\R} \Phi_{\frac{1}{2}}\left(\frac{\|\{f_\tau(x)\}\|_{\ell^2 _\tau}}{\alpha}\right)\,\d x,\qquad \alpha>0.
\]
Also, for any bounded interval $K$ and nonnegative integer $\sigma$, if $\supp f_\tau \subset K$ for all $\tau$, then 
\[
 \left\| \left(\sum_\tau |T_{m_\tau}f_\tau|^2\right)^{\frac{1}{2}}\right\|_{L\log^{\frac{\sigma}{2}}L(K)}\lesssim \left\| \left(\sum_\tau |f_\tau|^2\right)^{\frac{1}{2}}\right\|_{L\log^{\frac{\sigma+3}{2}}L(K)}.
\]
\end{theorem}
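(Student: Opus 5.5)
Both estimates of Theorem~\ref{thm:vvaluedR_2} come from one argument: the scalar scheme of \S\ref{sec:gl_R}, run with the vector-valued tools. By the approximation argument of \cite{TW}, we may assume that for each $\tau$
\[
T_{m_\tau} f=\sum_{I\in\mathcal I_\tau}\lambda_{\tau,I}P_I f ,
\]
where each family $\mathcal I_\tau$ has overlap at most $N_\tau$ and $\sum_{k_I=k}|\lambda_{\tau,I}|^2\le N_\tau^{-1}$. All bounds must be uniform in the $N_\tau$.

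\emph{Modular weak-type estimate.} Fix $\alpha>0$ and set $F=\|\{f_\tau\}\|_{\ell^2_\tau}$.
\begin{enumerate}
\item Take the maximal dyadic intervals $J$ with $\langle F\rangle_{\Phi_{1/2},J}>\alpha$. By Proposition~\ref{prp:OrliczFS} (or the scalar stopping argument applied to $F$), $\sum_J|J|\lesssim\int\Phi_{1/2}(F/\alpha)$.
\item Decompose each component as $f_\tau=g_\tau+\sum_J b_{\tau,J}$, where $b_{\tau,J}=(f_\tau-\langle f_\tau\rangle_J)\ind_J$.
\item Good part: Minkowski gives $\|\{g_\tau\}\|_{\ell^2}\lesssim\alpha$ pointwise. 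Since each $T_{m_\tau}$ is $L^2$-bounded with norm $\lesssim1$ (from the $\lambda$ normalization and the bounded overlap), Chebyshev in $L^2(\ell^2)$ bounds this contribution by $\alpha^{-1}\int F$.
\item Bad part, fixed $J$: apply Proposition~\ref{prp:weakSFvv} with $\sigma=0$ to $\{b_{\tau,J}\}_\tau$. These functions have mean zero and $\|\{b_{\tau,J}\}\|_{\ell^2}\in L\log^{1/2}L(J)$ with norm $\lesssim\alpha$. This yields $F_{\tau,k,J}$ with $|S_kb_{\tau,J}|\lesssim\phi_{2^{-k}}\ast F_{\tau,k,J}$ and
\[
\int_\R\Big(\sum_{\tau,k}|F_{\tau,k,J}|^2\Big)^{1/2}\lesssim|J|\alpha .
\]
\item Set $f_{\tau,I,J}=\lambda_{\tau,I}S_IS_{k_I}b_{\tau,J}$, majorized by $\widetilde\omega_{|I|^{-1}}\ast(|\lambda_{\tau,I}|F_{\tau,k_I,J})$, so that $T_{m_\tau}b_{\tau,J}=\sum_I P_I f_{\tau,I,J}$.
\item Apply Proposition~\ref{prp:vvaluedTW} with proxies $|\lambda_{\tau,I}|F_{\tau,k_I,J}$. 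Then
\[
\sum_\tau N_\tau\sum_I|\lambda_{\tau,I}|^2|F_{\tau,k_I,J}|^2\le\sum_{\tau,k}|F_{\tau,k,J}|^2 .
\]
\end{enumerate}

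The main obstacle is step~6: the weak $L^1$ bounds obtained for each $J$ separately cannot simply be added. I would avoid this by applying Proposition~\ref{prp:vvaluedTW} once, to the doubly indexed family $(\tau,J)\mapsto$ pieces. This works because the proposition is linear in the family, and it is applicable since $\sum_J$ is an outer sum inside each $\tau$-component. Concretely, use proxies $\widetilde F_{\tau,I}=\sum_J|\lambda_{\tau,I}|F_{\tau,k_I,J}$ and bound
\[
\Big(\sum_{\tau,I}N_\tau|\widetilde F_{\tau,I}|^2\Big)^{1/2}\le\sum_J\Big(\sum_{\tau,k}|F_{\tau,k,J}|^2\Big)^{1/2}
\]
by Minkowski in $\ell^2$. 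Its $L^1$ norm is then $\lesssim\alpha\sum_J|J|$. Dividing by $\alpha$ gives $\lesssim\int\Phi_{1/2}(F/\alpha)$, which combined with the good part proves the modular weak-type estimate.

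\emph{Local Orlicz estimate.} Normalize $K=[0,1]$ and split into scales $2^k\ge1$ and $2^k<1$.
\begin{enumerate}
\item Scales $2^k\ge1$: apply Proposition~\ref{prp:weakSFvv} at level $\sigma+2$ to get $F_{\tau,k}$ supported in $4K$ with
\[
\Big\|\Big(\sum_{\tau,k}|F_{\tau,k}|^2\Big)^{1/2}\Big\|_{L\log^{\frac{\sigma+2}{2}}L}\lesssim\|F\|_{L\log^{\frac{\sigma+3}{2}}L}.
\]
\item Feed these proxies into the modular inequality from the first part, with its Orlicz function upgraded. This is the step that may not go through as stated. The vector-valued weak-type bound of Proposition~\ref{prp:vvaluedTW} at all heights, together with the $L^2$ bound, should extrapolate (via the layer-cake formula and a good/bad split of the proxy square function at each height, exactly as in the CZ argument) to
\[
\int_{4K}\Phi_{\sigma/2}\big(\|\vec T\vec f\|_{\ell^2}\big)\lesssim\int\Phi_{(\sigma+2)/2}\Big(\Big(\sum|F_{\tau,k}|^2\Big)^{1/2}\Big),
\]
losing one full log in the integration over heights. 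If that loss is too crude, I would instead use the inequality $\Phi_{\sigma/2}$-integrability of $\mathsf F$ implies $\Phi_{\sigma/2}$-integrability of the operator, which loses one log.
\item Scales $2^k<1$: the pieces are smooth at scale $\ge1$ and pointwise dominated by $\langle F\rangle_K$, so these are harmless on $K$.
\item Convert modulars to norms with Proposition~\ref{prp:normequiv}.
\end{enumerate}
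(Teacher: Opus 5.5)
Your proof of the modular weak-type estimate follows the paper: a Calder\'on--Zygmund decomposition at the $\Phi_{1/2}$ scale, Proposition~\ref{prp:weakSFvv} with $\sigma=0$ on each atom, then Proposition~\ref{prp:vvaluedTW}. You apply Proposition~\ref{prp:vvaluedTW} once, with proxies $\widetilde F_{\tau,I}=\sum_J|\lambda_{\tau,I}|F_{\tau,k_I,J}$ and Minkowski in $\ell^2$. That is the right way to make the paper's ``apply for each $J$ and sum'' rigorous, since weak-$L^1$ bounds do not add over $J$.

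For the local Orlicz estimate you take a different, correct route.

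\emph{The paper's route.} The paper obtains this estimate as a formal corollary of the first conclusion. At each height $\lambda\ge 1$ it splits $\vec f=\vec f\ind_{\{F>\lambda\}}+\vec f\ind_{\{F\le\lambda\}}$. The modular weak-type bound is applied to the first piece and $L^2(\ell^2_\tau)$-boundedness to the second, and one integrates against $\Phi_{\sigma/2}'(\lambda)\simeq\log^{\sigma/2}(e+\lambda)$. The first piece contributes at most a constant times $\int F\log^{\frac12}(e+F)\bigl(\int_1^{\max(F,1)}\log^{\frac{\sigma}{2}}(e+\lambda)\,\frac{\d\lambda}{\lambda}\bigr)\d x\lesssim\int\Phi_{\frac{\sigma+3}{2}}(F)$. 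So the characterization is only used at $\sigma=0$, and no separate low-frequency analysis is needed.

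\emph{Your route.} You spend half a logarithm in Proposition~\ref{prp:weakSFvv} at level $\sigma+2$ and one logarithm extrapolating Proposition~\ref{prp:vvaluedTW} at the proxy level. The step you flagged does go through:
\begin{itemize}
\item Freeze the functions $a_{\tau,I}$ as in the proof of Proposition~\ref{prp:vvaluedTW}. Then the map $\vec G\mapsto\{\sum_I P_I(a_{\tau,I}(\phi_{|I|^{-1}}\ast G_{\tau,I}))\}_\tau$ is linear.
\item It is weak $(1,1)$ with respect to $\mathsf G=(\sum_\tau N_\tau\sum_I|G_{\tau,I}|^2)^{1/2}$, and bounded from $\|\mathsf G\|_{L^2}$ into $L^2(\ell^2_\tau)$ by the overlap computation in that proof.
\item The same layer-cake argument, splitting $\vec G$ according to whether $\mathsf G>\lambda$, then bounds the high-frequency part by $\int_K\Phi_{\sigma/2}(\cdot)\lesssim|K|+\int_{4K}\Phi_{\frac{\sigma}{2}+1}(\mathsf G)$.
\end{itemize}
This is exactly the one logarithm you budgeted. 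The fallback in your last sentence is therefore unnecessary, and as written it is inconsistent.

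\emph{What each buys.} Your route needs neither the Calder\'on--Zygmund decomposition nor cancellation for the second conclusion. It is also closer to how proxies are used in the proof of Theorem~\ref{thm:multiparam}. The paper's route is shorter and shows that the local bound holds for any operator with the modular weak-type and $L^2$ bounds.

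\emph{A small correction.} The low-frequency part is not pointwise controlled once the rough projections $P_I$ act. Use $L^2$ instead: with $|K|=1$, $\sum_\tau\|\sum_{k_I<0}\lambda_{\tau,I}P_If_\tau\|_{L^2}^2\le\sum_\tau\int_{|\xi|\le 1}|\widehat{f_\tau}|^2\le 2(\int_K F)^2$. Since $L^2(K)$ controls $L\log^{\sigma/2}L(K)$, this part is harmless.
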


\begin{proof} The proof follows the same coarse structure as the scalar proof in \S\ref{sec:gl_R}, so we only sketch the steps in the vector-valued case.

As in the one-parameter case, by the approximation argument of \cite{TW}*{p.~533--534} applied to each $m_\tau$, it suffices to treat the model operator 
\[
\mathcal T(\vec f)=\{T_\tau f_\tau\}_\tau,\qquad T_\tau f_\tau=\sum_{I\in\mathcal I_\tau}\lambda_{\tau,I}P_If_\tau, 
\]
 where, for some collection of positive integers $\{N_\tau\}_\tau$, there holds
 \[
    \sup_\tau \sum_{I\in \mathcal I_\tau}  \ind_I \leq N_\tau,\qquad \sup_{\tau}\sup_{k\in\Z} \sum_{\substack{I\in\mathcal I_\tau\\ k_I=k}}|\lambda_{\tau,I}|^2 \leq \frac{1}{N_\tau}.
 \]

Set $F\coloneqq (\sum_\tau|f_\tau|^2)^{1/2}$ and consider the maximal dyadic intervals $J$ with $\langle F\rangle_{\Phi_{1/2},J}>\alpha$. We set $f_\tau=g_\tau+b_\tau$ with $g_\tau\coloneqq f_\tau\ind_{\R\setminus\bigcup_JJ}+\sum_J\langle f_\tau\rangle_J\ind_J$ and $b_\tau\coloneqq\sum_J(f_\tau-\langle f_\tau\rangle_J)\ind_J=\sum_Jb_{\tau,J}$. By Proposition~\ref{prp:OrliczFS} we have $|\bigcup_JJ|\lesssim\int\Phi_{1/2}(F/\alpha)$, and by Minkowski's inequality $(\sum_\tau|g_\tau|^2)^{1/2}\lesssim\alpha$ pointwise, while $\langle(\sum_\tau|b_{\tau,J}|^2)^{1/2}\rangle_{\Phi_{1/2},J}\lesssim\langle F\rangle_{\Phi_{1/2},J}\lesssim\alpha$ for each $J$.

The good part is estimated directly in $L^2(\ell^2_\tau)$, using the $L^2(\ell^2_\tau)$-boundedness of the vector-valued operator $\mathcal T$:
\[
\left|\left\{\left(\sum_\tau|T_\tau g_\tau|^2\right)^{1/2}>\alpha\right\}\right|\lesssim\frac{1}{\alpha^2}\int_\R\sum_\tau|g_\tau|^2\lesssim\frac{1}{\alpha}\int_\R F.
\]

For the bad part, let us fix a bad interval $J$. Then, each $b_{\tau,J}$ has mean zero and also $(\sum_\tau|b_{\tau,J}|^2)^{1/2}\in L\log^{1/2}L(J)$. Because of the cancellation conditions on the $\{b_{\tau,J}\}_\tau$, the full strength of Proposition~\ref{prp:weakSFvv} applies to $\{b_{\tau,J}\}_\tau$ on $J$. Thus, for each $k\in\Z$ there exist functions $\{F_{\tau,k,J}\}_\tau$ with $|S_kb_{\tau,J}|\lesssim\phi_{2^{-k}}\ast F_{\tau,k,J}$. As in the scalar case, writing $P_I=P_IS_IS_{k_I}$ and setting $f_{\tau,I,J}\coloneqq\lambda_{\tau,I}S_IS_{k_I}(b_{\tau,J})$ gives $|f_{\tau,I,J}|\lesssim\widetilde\omega_{|I|^{-1}}\ast(\lambda_{\tau,I}F_{\tau,k_I,J})$ and $T_\tau b_{\tau,J}=\sum_IP_I(f_{\tau,I,J})$. By Proposition~\ref{prp:weakSFvv} with $\sigma=0$,
\[
\int_\R\left(\sum_\tau\sum_I|\lambda_{\tau,I}F_{\tau,k_I,J}|^2\right)^{1/2}\lesssim|J|\left\|\left(\sum_\tau|b_{\tau,J}|^2\right)^{1/2}\right\|_{L\log^{1/2}L(J)}.
\]
Applying Proposition~\ref{prp:vvaluedTW} to $\{b_{\tau,J}\}_\tau$ for each bad $J$ and summing over $J$ gives
\[
\left|\left\{\left(\sum_\tau|T_\tau b_\tau|^2\right)^{1/2}>\alpha\right\}\right|\lesssim\sum_J|J|\lesssim\int_\R\Phi_{1/2}(F/\alpha).
\]
Combining the good and bad part estimates, and using submultiplicativity of $\Phi_{1/2}$ we obtain
\[
\left|\left\{\left(\sum_\tau|T_\tau f_\tau|^2\right)^{1/2}>\alpha\right\}\right|\lesssim\int_\R\Phi_{1/2}\left(\frac{F}{\alpha}\right),
\]
which is the claimed weak-type estimate.

Now for each bounded interval $K$ the global estimate implies
\[
 \frac{1}{|K|}\left|\left\{x\in K:\, \left(\sum_\tau |T_{m_\tau}f_\tau(x)|^2\right)^{\frac{1}{2}}
>\alpha\right\}\right|\lesssim \frac{1}{\alpha} \left\|\left(\sum_\tau|f_\tau|^2\right)^{1/2}\right\|_{L\log^{1/2}L(K)}.
\]
The second conclusion in the statement of the Theorem follows by a standard level set integration argument, as for example in \cite{Bak2019}*{Lemma 3.2}.
\end{proof}

\section{Multiparameter $\mathcal{R}_2$--multipliers}\label{sec:R2n} In this section we introduce the class of multiparameter $\mathcal{R}_2$--multipliers. In order to do so, we need some additional notation. 

We work in $n$ parameters, $n \in \N$. For $j\in\{1,\ldots,n\}$ and $x=(x_1,\ldots,x_n)\in\R^n$ we denote by $\hat x_j \in \R^{n-1}$ the vector formed by omitting the $j$th coordinate of $x$.

\begin{definition}[$\mathcal R_{2,n}$--atoms and $\mathcal R_{2,n}$--norm]\label{def:R2n_atoms_norm}
Fix $n \in \N$. For each $j\in\{1,\dots,n\}$ let $\{P_I^{(j)}\}_{I}$ be the rough frequency projections onto intervals $I\subset\R$ in the $j$th variable. For rectangles
$R=\prod_{j=1}^n I_j$ set $P_R\coloneqq \bigotimes_{j=1}^n P_{I_j}^{(j)}$.

\begin{enumerate}[itemsep=.5em]
\item[(i)] \textbf{One parameter.}
An \emph{$\mathcal R_{2,1}$--atom} is a multiplier operator with symbol
\[
m=\sum_{I\in\mathcal I}\lambda_I \ind_I,
\]
where $\mathcal I$ is a finite family of pairwise disjoint dyadic intervals so that for each interval $I\in\mathcal I$ there exists a unique $k_I\in \Z$ such that $I\subseteq L_{k_I}$ and
\[
\sup_{k\in\Z}\ \sum_{\substack{I\in\mathcal I\\ k_I=k}}|\lambda_I|^2 \ \le\ 1.
\]
Denote by $\mathcal R_{2,1} ^{\rm at}$ the collection of $\mathcal R_{2,1}$--atoms as above. A one-parameter multiplier operator $T$ belongs to $\mathcal R_{2,1}$ if its symbol is in the atomic space generated by $\mathcal R_{2,1}$--atoms
\[
\mathcal R_{2,1}\coloneqq \left\{\sum_i a_i m_i : \,  m_i\in\mathcal R_{2,1} ^{\rm at}\,\,\forall i, \quad \sum_i |a_i|<+\infty \right\}.
\]
Define also 
\[
\|m\|_{\mathcal R_{2,1}}\coloneqq \inf\left\{\sum_i |a_i|:\, m=\sum_i a_i m_i,\quad m_i\in\mathcal R_{2,1} ^{\rm at}\,\,\forall i\right\}.
\]

\item[(ii)] \textbf{Inductive definition for $n>1$.} Assume $\mathcal R_{2,n-1}$--atoms and the norm $\|\cdot\|_{\mathcal R_{2,n-1}}$ have been defined. For $j\in\{1,\ldots,n\}$, an $\mathcal R_{2,n}^j$--atom is a multiplier operator with symbol $m$ such that $m$ admits a representation of the form
\[
m=\sum_{I\in\mathcal I}\lambda_I  \ind_I(\xi_j)  m_I(\hat \xi_j),
\]
where:
\begin{enumerate}[itemsep=.5em]
\item $\mathcal I$ is a finite pairwise disjoint family of dyadic intervals in the $j$th coordinate;
\item the coefficients satisfy the normalization
\[
\sup_{k\in\Z}\ \sum_{\substack{I\in\mathcal I\\ k_I=k }}|\lambda_I|^2 \ \le\ 1;
\]
\item for each $I\in\mathcal I$, the multiplier $m_I$ depends on the remaining $n-1$ variables and is an
$\mathcal R_{2,n-1}$--atom with respect to those variables.
\end{enumerate}
Denote by $\mathcal R_{2,n} ^{{\rm at},j}$ the class of such atoms. An $n$-parameter multiplier operator $T$ belongs to $\mathcal R_{2,n} ^j$ if its symbol is in the class generated by $\mathcal R_{2,n} ^{{\rm at},j}$--atoms
\[
\mathcal R_{2,n} ^j \coloneqq \left\{\sum_i a_i m_i: \, m_i\in\mathcal R_{2,n} ^{{\rm at},j}\,\,\forall i, \quad \sum_i |a_i|<+\infty \right\}.
\]
We equip this space with the atomic norm
\[
\|m\|_{\mathcal R_{2,n} ^j }\coloneqq \inf\left\{\sum_i |a_i|:\, m=\sum_i a_i m_i, \quad m_i\in\mathcal R_{2,n} ^{{\rm at},j}\,\,\forall i\right\} .
\]
Finally define the class of $n$-parameter $\mathcal R_2$--multipliers as 
\[
\mathcal R_{2,n} \coloneqq \bigcap_{j=1} ^n \mathcal R_{2,n} ^j,\qquad \|m\|_{\mathcal R_{2,n}}\coloneqq \max_{1\leq j \leq n} \|m\|_{\mathcal R_{2,n} ^j }.
\]
\end{enumerate}
When $n=1$ is clear from context we just write $\mathcal R_2$ in place of $\mathcal R_{2,1}$.

If $n \in \mathbb{N}$ and $\vec m \coloneqq \{ m_{\tau} \}_{\tau}$, where for each $\tau$ is an $n$-parameter $\mathcal R_2$--multiplier, we set
\[
\| \vec m \|_{\mathcal{R}_{2,n ; \tau}} \coloneqq \sup_{\tau} \| m_{\tau} \|_{\mathcal R_{2,n}}
\]
and we denote the corresponding vector-valued $\mathcal R_2$--multiplier operator by $\vec{T}_{\vec m}$.
 \end{definition}

Below we consider, as in \cite{Xu}*{p.~288}, the space of functions of bounded variation in $n$ parameters, denoted by $\mathcal V_q(\R^{\otimes n})$. The following proposition is a special case of a result from \cite{Xu}.

\begin{proposition}[\cite{Xu}*{Lemma 2}] Let $m\in\mathcal V_q(\R^{\otimes n})$, $1\leq q<2$, be an $n$-parameter multiplier of bounded $q$-variation. Then $m\in\mathcal{R}_{2,n}$. Furthermore, the corresponding multiplier norm in $\mathcal R_{2,n}$ is controlled by the $\mathcal{V}_q(\R^{\otimes n})$--norm with a constant depending only on $q$. In particular, $n$-parameter Marcinkiewicz multipliers belong to $\mathcal R_{2,n}$.
\end{proposition}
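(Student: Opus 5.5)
The plan is to build the atomic decomposition first in one parameter, following Coifman--Rubio de Francia--Semmes, and then to induct on $n$, isolating one coordinate at a time so as to match the inductive structure of Definition~\ref{def:R2n_atoms_norm}. The final claim is then immediate: an $n$-parameter Marcinkiewicz multiplier has bounded $\mathcal V_1(\R^{\otimes n})$-norm, controlled by $\|m\|_{M(\R^{\otimes n})}$, because the mixed derivative integrals dominate the mixed $1$-variations on products of Littlewood--Paley intervals. So the case $q=1$ gives it.

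\textbf{One parameter.} Let $m\in\mathcal V_q(\R)$ with norm at most $1$, and fix a Littlewood--Paley interval $L_k$; the $q$-variation of $m$ on $L_k$ is at most $1$.
\begin{enumerate}
\item For each $j\geq 0$, take the greedy partition $\mathcal P_{k,j}$ of $L_k$ into at most $C2^j$ consecutive intervals, each carrying $q$-variation$^q$ at most $2^{-j}$. Arrange these partitions to be nested in $j$.
\item Let $m_j$ be the step function equal to the value of $m$ at a fixed point of each $I\in\mathcal P_{k,j}$, for every $k$. Then $m=m_0+\sum_{j\ge0}(m_{j+1}-m_j)$, with convergence a.e.\ and in the sense of multipliers, since the oscillation of $m$ on each cell of $\mathcal P_{k,j}$ is at most $2^{-j/q}$.
\item On each $L_k$, the difference $m_{j+1}-m_j$ equals $\sum_{I\in\mathcal P_{k,j+1}}\lambda_I\ind_I$ with $|\lambda_I|\le 2^{-j/q}$. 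Hence $\sum_{I\subset L_k}|\lambda_I|^2\lesssim 2^{j}2^{-2j/q}=2^{-j(2/q-1)}$.
\item Consequently $m_{j+1}-m_j=c\,2^{-j(1/q-1/2)}a_j$ with $a_j$ an $\mathcal R_{2,1}$--atom. Since $q<2$ these coefficients are summable, which gives $\|m\|_{\mathcal R_{2,1}}\lesssim_q 1$.
\item The term $m_0$ is a single atom, with one interval per $L_k$ and coefficient bounded by $\|m\|_\infty$.
\end{enumerate}

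\textbf{Induction on $n$.} Fix $j\in\{1,\dots,n\}$. View $m$ as a function of $\xi_j$ with values in $\mathcal V_q(\R^{\otimes(n-1)})$ in the variables $\hat\xi_j$. The definition of $\mathcal V_q(\R^{\otimes n})$, as in \cite{Xu}, controls exactly this vector-valued $q$-variation on each $L_k$ through the mixed variations.
\begin{enumerate}
\item Repeat the greedy construction in $\xi_j$, now measuring the $q$-variation in the $\mathcal V_q(\R^{\otimes(n-1)})$ norm.
\item Each increment takes the form $\sum_I\ind_I(\xi_j)\,\mu_I(\hat\xi_j)$, where $\mu_I$ is a difference of two slices of $m$ and satisfies $\|\mu_I\|_{\mathcal V_q(\R^{\otimes (n-1)})}\le 2^{-j'/q}$ at level $j'$.
\item By the inductive hypothesis, $\mu_I=\sum_i a_{I,i}m_{I,i}$ with $m_{I,i}$ being $\mathcal R_{2,n-1}$--atoms and $\sum_i|a_{I,i}|\lesssim\|\mu_I\|_{\mathcal V_q}$.
\item To obtain $\mathcal R^{{\rm at},j}_{2,n}$--atoms, the $\hat\xi_j$-atom must be indexed uniformly over $I$. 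I would first normalize $\sum_i|a_{I,i}|=\|\mu_I\|$, pad with zero atoms, and then choose one atom per $I$ at random according to the probabilities $|a_{I,i}|/\|\mu_I\|$.
\item If this averaging is awkward, the alternative is to fix the index pattern by ordering the decompositions, as in Xu's argument.
\end{enumerate}
The same geometric factor $2^{-j'(1/q-1/2)}$ then yields $\|m\|_{\mathcal R^j_{2,n}}\lesssim_q 1$ for every $j$, and taking the maximum over $j$ gives $\|m\|_{\mathcal R_{2,n}}\lesssim_q\|m\|_{\mathcal V_q(\R^{\otimes n})}$.

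\textbf{Main obstacle.} The hardest point is the requirement that atoms use pairwise disjoint \emph{dyadic} intervals. The greedy partitions produce arbitrary intervals, and splitting an interval into dyadic pieces with the same coefficient would inflate $\sum|\lambda_I|^2$ by a factor equal to the number of pieces. I would first try a dyadic stopping time inside $L_k$, selecting maximal dyadic subintervals whose $q$-variation$^q$ is at most $2^{-j}$. I would then bound the number of selected cells by $O(2^j)$ up to a factor that is harmless after summing in $j$, using that a selected cell's parent carries $q$-variation$^q$ greater than $2^{-j}$. If that count fails, the fallback is the one the paper's reductions already permit: the approximation argument of \cite{TW} only needs finitely overlapping intervals, and an interval splits into two dyadic-type pieces after a harmless shift, as in Xu's treatment.
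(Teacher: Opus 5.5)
\textbf{Gap: the induction step, and the definition of $\mathcal R_{2,n}$ itself.} The paper does not prove this statement; it only cites Xu, so your sketch has to stand on its own. Your one-parameter part is the Coifman--Rubio de Francia--Semmes argument and is fine, provided atoms may use arbitrary disjoint subintervals of the $L_k$, and infinitely many of them (your increments $m_{j+1}-m_j$ already involve every $L_k$).

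The genuine gap is in the induction, at the sentence claiming that $\|m\|_{\mathcal V_q(\R^{\otimes n})}$ controls the $q$-variation in $\xi_j$ of the $\mathcal V_q(\R^{\otimes(n-1)})$-valued map $\xi_j\mapsto m$. The $(n-1)$-parameter norm contains a supremum over all Littlewood--Paley boxes in $\hat\xi_j$, and different increments can realize it in different boxes. For $n=2$, fix $1<s_1-\delta<s_1+\delta<\cdots<s_N-\delta<s_N+\delta<2$. Take smooth nondecreasing $\rho_i$ equal to $0$ on $(-\infty,s_i-\delta]$ and to $1$ on $[s_i+\delta,\infty)$, and set
\[
m(\xi_1,\xi_2)\coloneqq\sum_{i=1}^N\rho_i(\xi_1)\,\ind_{L_i^+}(\xi_2).
\]
Each slice $m(\cdot,\xi_2)$ has at most one unit ramp, and $\partial_2 m=\partial_1\partial_2 m=0$ inside Littlewood--Paley rectangles, so $\|m\|_{M(\R^{\otimes 2})}\le 2$. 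Yet the increments $\ind_{L_i^+}$ of $\xi_1\mapsto m(\xi_1,\cdot)$ across the $N$ ramps each have sup norm $1$.

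No cleverer decomposition helps. Pick $t_i\in L_i^+$ and set $\Lambda(g)\coloneqq\sum_{i=1}^N[g(s_i+\delta,t_i)-g(s_i-\delta,t_i)]$. Then $\Lambda(m)=N$. Now take an $\mathcal R^{\mathrm{at},1}_{2,2}$--atom $a=\sum_I\lambda_I\ind_I(\xi_1)m_I(\xi_2)$:
\begin{itemize}
\item only indices $i$ with $s_i\pm\delta$ in different cells contribute;
\item the maps sending such $i$ to the cell of $s_i+\delta$, resp.\ $s_i-\delta$, are injective;
\item $|m_I|\le 1$.
\end{itemize}
Cauchy--Schwarz then gives $|\Lambda(a)|\le 2\sqrt N$, hence $\|m\|_{\mathcal R_{2,2}}\ge\sqrt N/2$. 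With infinitely many ramps, $s_i\uparrow 2$, one gets a Marcinkiewicz multiplier outside $\mathcal R_{2,2}$.

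So, whatever the precise definition of $\mathcal V_q(\R^{\otimes n})$, your chain $\|m\|_{\mathcal R_{2,n}}\lesssim\|m\|_{\mathcal V_1}\lesssim\|m\|_{M}$ must break. With $\mathcal R_{2,n}$ literally as defined, the ``in particular'' is false. Your induction needs a class in which the $\ell^2$ normalization in $\xi_j$, and the partition in $\xi_j$, are localized to each Littlewood--Paley box of the remaining variables. With that localization your scheme (greedy partitions box by box, then random selection to glue the inner atoms into a finite convex combination) is the natural route. For $q=1$ the box-local vector-valued variation is then indeed bounded by the Marcinkiewicz norm, because $1$-variation sums increase under refinement. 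For $1<q<2$, if $\mathcal V_q(\R^{\otimes n})$ is defined by mixed differences over grid partitions, even this box-local domination needs proof: distinct increments may need distinct inner partitions, and refining can decrease $q$-sums.

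\textbf{The dyadic ``main obstacle''.} This is misdiagnosed, and neither proposed fix works. If cells really had to be dyadic, the statement would fail already for $n=1$. Take $g=\ind_{(1,5/3]}$; the offset $2/3=(0.1010\ldots)_2$ never terminates. Let $(1,2]=Q_0\supset Q_1\supset\cdots\supset Q_M\ni 5/3$ be the dyadic chain, and let $Q_s^{\mathrm l},Q_s^{\mathrm r}$ be the children of $Q_s$. The functional $\Lambda_M(h)\coloneqq\sum_{s<M}\bigl(\langle h\rangle_{Q_s^{\mathrm l}}-\langle h\rangle_{Q_s^{\mathrm r}}\bigr)$ gives $\Lambda_M(g)=2M/3$. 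On the other hand, disjointness of the cells and Cauchy--Schwarz give $|\Lambda_M(a)|\lesssim\sqrt M$ for every dyadic atom $a$, so $g$ has infinite norm.

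Consistently with this:
\begin{itemize}
\item for each $j\ge 1$ your dyadic stopping time selects one cell at every scale and never covers the jump, so there is no $O(2^j)$ count;
\item no shift makes all cells of a partition unions of boundedly many dyadic intervals;
\item the Tao--Wright approximation concerns the operators, not membership in the class.
\end{itemize}
The only workable reading is the Coifman--Rubio de Francia--Semmes one, with arbitrary disjoint subintervals of each $L_k$. Under it your greedy partitions are admissible as they stand, and this obstacle disappears.
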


\section{Local endpoint for $n$-parameter $\mathcal R_2$--multipliers}\label{sec:proofmain} This section contains the proof of the sharp endpoint local estimate for $\mathcal R_{2,n}$--multipliers, namely the proof of Theorem~\ref{thm:multiparam}. Before turning to the proof we make some initial reductions. The simplest one is to reduce matters to the case where $R$ is the unit cube  $Q=[0,1]^n$. If $R$ has dyadic sidelengths, this follows immediately from translation invariance and dyadic dilation invariance of the desired estimate. Indeed, due to the Littlewood--Paley structure underlying the definition of $\mathcal R_{2,n}$--multipliers, the class $\mathcal R_{2,n}$ is invariant under dyadic rescalings of the frequency variables, and the normalized $L\log^{\alpha_n}L$ quasi-norms are invariant under the corresponding spatial rescalings.

For a general rectangle $R$, we may enlarge $R$ to a rectangle $R^\ast$ with the same center and dyadic sidelengths satisfying $|R|\le |R^\ast|\le C|R|$, for a positive constant $C$ that depends only on $n$. Since the left-hand side is monotone in $R$ and the right-hand side is normalized by $|R|\simeq |R^\ast|$, it suffices to prove the estimate for $R^\ast$. Thus we may assume without loss of generality that $R=Q=[0,1]^n$.

Now consider some function $f$ with $\supp f\subset Q$ as in the assumptions of the theorem. We show below that $f$ can be written as a finite sum of pieces with cancellation in one of the variables, plus a constant piece. To this end, we need some notation.  

For $j\in\{1,\dots,n\}$ we write $\mathbb E^{x_j}_0$ for the dyadic conditional expectation at scale $0$ in the $x_j$-variable, that is, averaging over dyadic intervals of length $1$ in the $x_j$-coordinate while keeping the remaining coordinates fixed.

\begin{lemma}\label{lem:onevar-cancel} Let $\{f_\tau\}_\tau $ be such that $\supp f_\tau\subset Q\coloneqq [0,1]^n$ and $f_\tau \in L^1(Q)$ for each $\tau$. Then there exist functions $g_{\tau,1},\ldots,g_{\tau,n}$ such that
\[
f_\tau=\sum_{j=1}^n g_{\tau,j}+\langle f_\tau \rangle_Q\,\ind_Q\quad\forall \tau.
\]
Furthermore, for each $j\in\{1,\dots,n\}$ and for each $\tau$, there holds
\[
\mathbb E^{x_j}_0 g_{\tau,j}=0.
\]
\end{lemma}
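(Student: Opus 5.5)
The plan is a telescoping decomposition through the commuting one-variable averaging operators $\mathbb E^{x_1}_0,\ldots,\mathbb E^{x_n}_0$, applied to each $f_\tau$ separately.

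Write $E_j\coloneqq \mathbb E^{x_j}_0$. These operators commute and are idempotent. Since $\supp f_\tau\subset Q=[0,1]^n$ and $[0,1)$ is a dyadic interval of length $1$, on $Q$ the composition $E_1E_2\cdots E_n f_\tau$ equals $\langle f_\tau\rangle_Q\ind_Q$ almost everywhere. Outside $Q$ it vanishes: if some coordinate $x_i$ lies outside $[0,1)$, then $E_i$ averages over a unit dyadic interval on which $f_\tau$ vanishes. Boundary points form a null set and can be ignored.

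Next define, with the convention that the empty composition is the identity,
\[
g_{\tau,j}\coloneqq E_1\cdots E_{j-1} f_\tau - E_1\cdots E_{j} f_\tau=(\mathrm{Id}-E_j)\,E_1\cdots E_{j-1}f_\tau ,\qquad j=1,\ldots,n.
\]
The sum over $j$ telescopes to $f_\tau - E_1\cdots E_n f_\tau = f_\tau-\langle f_\tau\rangle_Q\ind_Q$, which is the claimed identity. For the cancellation, I would apply $E_j$ and use $E_j(\mathrm{Id}-E_j)=E_j-E_j^2=0$. This gives $\mathbb E^{x_j}_0 g_{\tau,j}=0$, because $E_j$ commutes with the other factors.

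The only points that need checking are that these operators are well defined on $L^1$ functions supported in $Q$ (Fubini shows the one-variable averages are defined for almost every choice of the remaining variables), that they commute (again by Fubini), and that $E_1\cdots E_n$ restricted to $Q$ is the full average. None of these is a real obstacle. As a side remark, each $g_{\tau,j}$ is supported in $Q$ and satisfies the pointwise bound $|g_{\tau,j}|\le 2E_1\cdots E_{j-1}|f_\tau|$. This is presumably what the subsequent Orlicz estimates will use.
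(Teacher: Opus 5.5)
Your proof is correct and is the paper's telescoping argument: your $E_1\cdots E_j f_\tau$ is the paper's $F_{\tau,j}=\mathbb E^{x_j}_0F_{\tau,j-1}$ (they coincide by commutativity), and the cancellation $\mathbb E^{x_j}_0 g_{\tau,j}=0$ comes from the same idempotence identity $E_j(\mathrm{Id}-E_j)=0$. One slip, in a side remark the lemma does not need: the bound $|g_{\tau,j}|\le 2E_1\cdots E_{j-1}|f_\tau|$ is false (for $f_\tau=\ind_{\{x\in Q:\,x_1<1/2\}}$ one has $|g_{\tau,1}|=\tfrac12$ where $x_1\ge\tfrac12$, although $f_\tau=0$ there); the correct pointwise bound is $E_1\cdots E_{j-1}|f_\tau|+E_1\cdots E_j|f_\tau|$, and Orlicz control of $g_{\tau,j}$ should come from conditional Jensen for the averages $E_j$, not from domination by $|f_\tau|$.
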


\begin{proof}
Set $F_{\tau,0}\coloneqq f_\tau$ and define recursively
\[
F_{\tau,j}\coloneqq \mathbb E^{x_j}_0 F_{\tau,j-1},\qquad j=1,\dots,n.
\]
Since each $f_\tau$ is supported on $Q$ and $\mathbb E^{x_j}_0$ averages over intervals of length $1$ contained in $[0,1]$, we have $\supp F_{\tau,j}\subset Q$ for every $j$. Note that $F_{\tau,n}=\langle f_\tau \rangle_Q \ind_Q $. Now set
\[
g_{\tau,j}\coloneqq F_{\tau,j-1}-\mathbb E^{x_j}_0F_{\tau, j-1},\qquad j=1,\dots,n.
\]
Since $F_{\tau,j}=\mathbb E^{x_j}_0F_{\tau,j-1}$, we have $g_{\tau,j}=F_{\tau,j-1}-F_{\tau,j}$ and
\[
f_\tau-F_{\tau,n}=\sum_{j=1}^n g_{\tau,j}.
\]
Finally, by idempotence of conditional expectation,
\[
\mathbb E^{x_j}_0 g_{\tau,j} =\mathbb E^{x_j}_0F_{\tau,j-1}-\mathbb E^{x_j}_0\mathbb E^{x_j}_0F_{\tau,j-1} =F_{\tau,j}-F_{\tau,j}=0
\] 
as desired.
\end{proof}

The next reduction is to get rid of the constant term $\{\langle f_\tau\rangle_Q \ind_Q\}_\tau$ in the decomposition of
$\vec f\coloneqq \{f_\tau\}_\tau$.
Assume first that
\[
\alpha>\left\langle \|\vec f \|_{\ell^2 _\tau}\right\rangle_Q .
\]
Then for any $L^2(\ell^2)$-bounded operator $\vec T=\{T_\tau\}$ with
$\|\vec T\|_{L^2(\ell^2)\to L^2(\ell^2)}\leq 1$, we have
\[
\left|\left\{x\in Q:\,
\|T_\tau(\langle f_\tau\rangle_Q \ind_Q )\|_{\ell^2 _\tau}>\alpha
\right\}\right|
\leq
\alpha^{-2}\sum_\tau |\langle f_\tau\rangle_Q|^2\,|Q|
\leq
\frac{|Q|}{\alpha}\left\langle\|\vec f \|_{\ell^2 _\tau}\right\rangle_Q .
\]
If on the other hand $\alpha<\langle \|\vec f \|_{\ell^2 _\tau}\rangle_Q$, then
trivially
\[
\left|\left\{x\in Q:\,
\|T_\tau(\langle f_\tau\rangle_Q \ind_Q )\|_{\ell^2 _\tau}>\alpha
\right\}\right|
\leq |Q|
\leq
\frac{|Q|}{\alpha}\left\langle\|\vec f \|_{\ell^2 _\tau}\right\rangle_Q .
\]
Thus the conclusion of the theorem holds for the constant term $\{\langle f_\tau\rangle_Q \ind_Q\}_\tau$ with the smaller $L^1$-norm on the right-hand side.

Because of the calculation above and the decomposition of Lemma~\ref{lem:onevar-cancel}, it suffices to focus on one of the cancellative pieces $\{g_{\tau,j}\}_\tau$ for some fixed index $j\in\{1,\ldots,n\}$. For this family we have the cancellation property
\begin{equation}\label{eq:cancel}
\int_0^1 g_{\tau,j}(x_1,\ldots,x_{j-1},t,x_{j+1},\ldots,x_n)\,\d t = 0
\end{equation}
for every $\tau$ and for almost every $\hat x_j\in[0,1]^{n-1}$. Moreover, by the symmetric definition of the multiparameter $\mathcal R_{2,n}$ class, we may assume that the multiplier symbol $m$ belongs to $\mathcal R_{2,n}^j$. Since the argument is independent of the choice of coordinate, we may relabel the variables and henceforth assume that $j=1$. Accordingly, we write $x=(x_1,x')\in Q=Q_1\times Q_{n-1}$, where $Q_1=[0,1]$ and $Q_{n-1}=[0,1]^{n-1}$. 

With these reductions understood, we now use the atomic definition of $\mathcal R_{2,n}^1$--multipliers together with the approximation argument in \cite{TW}*{p.~533--534}. By the latter we may assume that each $T_\tau$ is of the form
\[ 
T_\tau f= \sum_{I\in\mathcal I_\tau} \lambda_{\tau,I}\, P_I^{x}\, T^{y}_{m_{\tau,I}} f,
\]
where for each fixed $\tau$ the family $\mathcal I_\tau$ consists of dyadic intervals in the $x$-frequency variable with pointwise overlap $N_\tau$, and the coefficients satisfy the normalization
\[
\sup_{k\in\mathbb Z} \sum_{\substack{I\in\mathcal I_\tau\\ k_I=k}} |\lambda_{\tau,I}|^2 \leq \frac{1}{N_\tau}  \qquad\text{for each }\tau.
\]
Moreover, for each fixed $I$ the operator $\{T^{y}_{m_{\tau,I}}\}_\tau$ is an
$(n-1)$-parameter vector-valued $\mathcal R_{2,n-1}$ multiplier in the $y$-variables,
with $\mathcal R_{2,n-1}$--norm bounded uniformly in $I$.

\begin{proof}[Proof of Theorem~\ref{thm:multiparam}] Consider $\vec f=\{f_\tau\}_\tau$ supported in $Q$. By homogeneity we may assume
\begin{equation}\label{eq:normal}
\left\|\ \|\vec f\|_{\ell^2_\tau}\ \right\|_{L\log^{\alpha_n}L(Q)}=1.
\end{equation}
Furthermore, by the discussion at the beginning of this section, we may assume that $\vec f$ satisfies the cancellation condition~\eqref{eq:cancel}. It thus suffices to prove
\[
\left|\left\{ u \in Q:\, \left \|\,\vec T\vec f(u)\, \right\|_{\ell^2 _\tau}>\alpha \right\}\right|\lesssim\frac{1}{\alpha}
\]
for $\vec f=\{f_\tau\}_\tau$ supported on $Q$ and satisfying \eqref{eq:cancel} and  \eqref{eq:normal}.

 The proof is by induction on $n$. The base case $n=1$ is the content of Theorem~\ref{thm:vvaluedR_2}. Assume now that the theorem has been proved for
$n-1$. In particular, if $\vec S=\{S_\tau\}$ is an $(n-1)$-parameter vector-valued $\mathcal R_{2,n-1}$--multiplier acting on the $y$-variables, then the inductive hypothesis, together with the standard weak-to-strong upgrade on $Q_{n-1}$ (see e.g. \cite{Bak2019}*{Lemma 3.2}), implies
\begin{equation}\label{eq:induct}
\left\| \ \left\|\,\vec S(\vec h)\, \right\|_{\ell^2 _\tau}\right\|_{L^1(Q_{n-1})}
\lesssim
\left\|\,\vec h\, \right\|_{L\log ^{\alpha_{n-1}+1} L(Q_{n-1};\ell^2 _\tau)}
\end{equation}
for all $\ell^2_\tau$-valued functions $\vec h$ supported in the unit cube
$Q_{n-1}\subset\R^{n-1}$.

For each $\tau$ and $I\in\mathcal I_\tau$ we insert the auxiliary cutoffs $S_{k_I}^x$ and $S_I^x$; recall that these are smooth frequency projections with symbol identically $1$  on $L_{k_I}$ and $I$, and supported on $2L_{k_I}$ and $2I$, respectively. For each fixed $\tau$ and $I\in\mathcal I_\tau$ we write
\[
\lambda_{\tau,I} P_I^x T^y_{m_{\tau,I}} f_\tau = P_I^x\, S_{k_I}^x\, S_I^x\left(\lambda_{\tau,I} T^y_{m_{\tau,I}} f_\tau\right).
\]
It will be convenient to split for each $\tau$,
\[
 T_\tau  f_\tau = \sum_{\substack{I\in\mathcal I_\tau \\ k_I<0}} \lambda_{\tau,I}\, P_I^{x}\, T^{y}_{m_{\tau,I}} f_\tau+\sum_{\substack{I\in\mathcal I_\tau \\ k_I\geq 0}} \lambda_{\tau,I}\, P_I^{x}\, T^{y}_{m_{\tau,I}} f_\tau \eqqcolon T_{\tau,<} f_\tau + T_{\tau,>} f_\tau .
\]
Let 
\[
\vec T_>\coloneqq \{T_{\tau,>}\}_\tau,\qquad 	\vec T_<\coloneqq \{T_{\tau,<}\}_\tau,\qquad \vec T=\vec T_{<}+\vec T_>.
\]

\subsection*{The case $k_I<0$ and the proof for $\vec T_<$} If $I\in\mathcal I_\tau$ is such that $k_I<0$ we set
\[ 
f_{\tau,I}\coloneqq S_I ^x S_{k_I} ^x \lambda_{\tau,I} T_{m_{\tau,I}} ^yf_\tau,\qquad | f_{\tau,I}|\lesssim\phi_{|I|^{-1}}\ast^{x}  | T_{m_{\tau,I}} ^y (\lambda_{\tau,I}\omega_{2^{-k_I}}\ast^x f_\tau)|\coloneqq\phi_{|I|^{-1}}\ast^{x} F_{\tau,I}.
\]
Explicitly,
\[
\vec T_< \vec f= \sum_{\substack{I\in\mathcal I_\tau \\ k_I<0}} \lambda_{\tau,I}\, P_I^{x}\, T^{y}_{m_{\tau,I}} f_\tau=\sum_{\substack{I\in\mathcal I_\tau \\ k_I<0}}  P_I^{x} f_{\tau,I}.
\]
Hence, an application of Proposition~\ref{prp:vvaluedTW} in the $x$-variable yields
\[
\begin{split}
&  \left|\left\{(x,y)\in Q_1\times Q_{n-1}:\, \left\|\vec T_< \vec f\right\|_{\ell^2 _\tau}>\alpha\right\}\right|\lesssim \frac{1}{\alpha} \int_{Q_{n-1}}\int_{\R} \left(\sum_{\tau} N_\tau \sum_{I\in\mathcal I_\tau} |F_{\tau,I}|^2\right)^{\frac{1}{2}}\,\d x\,\d y
\\
&\qquad = \frac{1}{\alpha} \int_{Q_{n-1}}\int_{\R}  \left(\sum_{\tau,I}N_\tau|T_{m_{\tau,I}} ^y (\lambda_{\tau,I}\omega_{2^{-k_I}}\ast^{x}f_\tau)|^2\right)^{\frac{1}{2}}\,\d x\,\d y
\\
&\qquad \lesssim \frac{1}{\alpha}  \int_{\R} \left\|\left(\sum_{\tau,I}N_\tau| \lambda_{\tau,I}(\omega_{2^{-k_I}}\ast^x f_\tau)|^2\right)^{\frac{1}{2}}  \right\|_{L\log^{\alpha_{n-1}+1}L(Q_{n-1},\d y)}\, \d x,
\end{split}
\]
where the last estimate follows by the inductive assumption in the form of \eqref{eq:induct}. Now note that
\[
\begin{split}
 \left(\sum_{\tau,I}N_\tau| \lambda_{\tau,I}(\omega_{2^{-k_I}}\ast^x f_\tau)|^2\right)^{\frac{1}{2}}&\leq
 \left(\sum_\tau N_\tau \sum_{k<0} \sum_{k_I=k} | \lambda_{\tau,I}|^2| (\omega_{2^{-k}}\ast^x f_\tau)|^2\right)^{\frac{1}{2}}
 \\
& \leq \left(\sum_\tau   \sum_{k<0} | (\omega_{2^{-k}}\ast^x f_\tau)|^2\right)^{\frac{1}{2}}
\eqqcolon G.
\end{split}
\]
Here we used the normalization of the coefficients $\{\lambda_{\tau,I}:\, k_I=k\}$ for each $k<0$. Thus it suffices to estimate
\begin{equation}\label{eq:orl}
\int_{\R} \left\|G \right\|_{L\log^{\alpha_{n-1}+1}L(Q_{n-1},\d y)} \, \d x \lesssim 1.
\end{equation}
We proceed with the proof of \eqref{eq:orl}. Using the cancellation assumption \eqref{eq:cancel} and writing $F(x,y)\coloneqq \|\vec f(x,y)\|_{\ell^2_\tau}$ we have for each $k<0$,
\[
|\omega_{2^{-k}}\ast^x f_\tau(x,y)| \lesssim 2^{2k}\,\widetilde \omega_{2^{-k}}\ast^x |f_\tau|(x,y)
\]
for some fast decaying bump function $\widetilde \omega$. Taking the $\ell^2$-norm in $\tau$ and using Minkowski's inequality yields 
\[
\left(\sum_\tau |\omega_{2^{-k}}\ast^x f_\tau(x,y)|^2\right)^{1/2} \lesssim 2^{2k}\,\widetilde \omega_{2^{-k}}\ast^x F(x,y).
\]
Bounding the $\ell^2 _k$ norm by the $\ell^1 _k$ norm, we have
\[
G(x,y) \lesssim \sum_{k<0}2^{2k}\,\widetilde \omega_{2^{-k}}\ast^x F(x,y) = K\ast^x F(x,y),
\]
where
\[ K  \eqqcolon  \sum_{k<0}2^{2k}\,\widetilde \omega_{2^{-k}}. 
\]
Note that $K\in L^1(\R)$ with $\|K\|_{L^1(\R)}\lesssim 1$. Set $r_n \coloneqq \alpha_{n-1}+1=\alpha_n-1/2$. Using Minkowski's inequality for the Orlicz space $L\log^{r_n} L([0,1])$, yields
\begin{align*}
\left\|G(x,\cdot)\right\|_{L\log^{r_n} L(Q_{n-1},\d y)}&\lesssim \left\|(K\ast^x F)(x,\cdot)\right\|_{L\log^{r_n} L(Q_{n-1},\d y)}
\\
&\le
\int_{\mathbb R} |K(t)| \left\|F(x-t,\cdot)\right\|_{L\log^{r_n} L(Q_{n-1},\d y)}\,\d t \\
&=
\left(|K|\ast^x \|F(x,\cdot)\|_{L\log^{r_n} L(Q_{n-1},\d y)}\right)(x).
\end{align*}
In particular,
\[
\begin{split}
&\int_{\R}\left\|G(x,\cdot)\right\|_{L\log^{r_n} L(Q_{n-1},\d y)}\, \d x \lesssim \int_{\R} \|F(x,\cdot)\|_{L\log^{r_n} L(Q_{n-1},\d y)} \,\d x
\\
& \qquad \lesssim \int_{Q_1} \left[1+\int_{Q_{n-1}} \Phi_{r_n}\left(|F(x,y)|\right)\, \d y \right]\,\d x 
\\
& \qquad = 1+\int_Q \Phi_{r_n}\left(|F(x,y)|\right)\,\d x \, \d y \lesssim 1+\Phi_{\alpha_n-\frac{1}{2}}\left(\|F\|_{L\log^{\alpha_n-\frac{1}{2}}L(Q)}\right),
\end{split}
\]
where we used (ii) and (iii) of Proposition~\ref{prp:normequiv} in the last line. Remembering our normalization \eqref{eq:normal} the right-hand side of the display above is bounded by $1+\Phi_{\alpha_n-\frac{1}{2}}\left(1\right)\lesssim 1$ and the proof of the inductive step for $\vec T_{<}$ is complete.

\subsection*{The case $k_I\geq 0$ and the proof for $\vec T_>$}

Let us denote by $\mathbb E_k ^x f$ the conditional expectation and by $\mathbb D_k ^x f$ the martingale differences, as operators acting on the $x$-variable. By the Chang--Wilson--Wolff inequality in one parameter, but applied to functions of $n$ variables, we have
\[
\left\| \| \{f_\tau-\mathbb E_0 ^x f_\tau\}\|_{\ell^2 _\tau}\right\|_{L^p(Q)}\lesssim p^{\frac{1}{2}} \left\| \left(\sum_{\tau} \sum_{\ell\geq 0}| \mathbb D_\ell ^x f_\tau|^2\right)^{\frac{1}{2}} \right\|_{L^p(Q)},
\]
with implicit constant independent of $p$. By repeating verbatim the proof of \cite{BCPV}*{Theorem D} we deduce the existence of functions $f_{\tau,\ell}$ such that for all $\tau$ and nonnegative integers $\ell$ there holds
\begin{equation}\label{eq:weaksfvv}
\begin{split}
&\qquad \quad\mathbb D_\ell ^x f_\tau =\mathbb D_\ell ^x f_{\tau,\ell} \quad  \forall \ell\geq 0,\qquad \supp f_{\tau,\ell} \subset 4Q_1,\\
& \qquad \left\|\left(\sum_{\tau} \sum_{ \ell\geq 0}|f_{\tau,\ell}|^2 \right)^{\frac{1}{2}}\right\|_{L\log^{\alpha_n-\frac{1}{2}}L(Q)}\lesssim \left\|\left(\sum_{\tau}|f_{\tau}|^2 \right)^{\frac{1}{2}}\right\|_{L\log^{\alpha_n}L(Q)}= 1,
\end{split}
\end{equation}
since $\vec f$ satisfies the normalization~\eqref{eq:normal}.

For any function $h$ on $Q$ and $\theta \in [-1/3,1/3]$ let us write $h^\theta(x,y)\coloneqq h(x-\theta,y)$. Now fix $(x,y)\in Q_1\times Q_{n-1}$. Write $g^{\theta} _{\tau,\ell}$ for the functions generated by applying \eqref{eq:weaksfvv} to the vector $\{f^\theta _\tau\}_\tau$. A perusal of the proof of \cite{TW}*{Proposition 4.1} allows us to write for each nonnegative integer $k$, for every $\tau$, and any Fourier multiplier operator $T^y$ acting on the $y$-variable, 
\[
\begin{split}
|S_k ^x (T^y  f_\tau)|  &= |T^y(S_k ^x f_\tau)| =\left|T^y\left( \sum_{\ell\geq 0}  S_k ^x ( \mathbb D_\ell ^x f_\tau)\right)\right| 
=\left|T^y\left(\sum_{\ell\geq 0}  S_k ^x (\mathbb D_\ell ^x f_\tau ^\theta)(\cdot+\theta)\right)\right| \\
&=\left|T^y\sum_{\ell\geq 0}\sum_{|J|=2^{-\ell}}  \left( \langle g_{\tau,\ell} ^\theta ,h_J\rangle S_k ^x  h_J(x+\theta)\right)\right|
\\
&\leq \sum_{\ell\geq 0}\sum_{|J|=2^{-\ell}} \left(\int_J |T^y(g_{\tau,\ell} ^\theta)|\right)|J|^{-1/2}|S_kh_J(x+\theta)|.
\end{split}
\]
Now, for each $x,\ell$ there exists a measurable set $\Theta(x,\ell)\subset [-1/3,1/3]$ with $|\Theta(x,\ell)|\simeq 1$ such that, for every  $\theta\in \Theta(x,\ell)$, there holds
\[
\sum_{|J|=2^{-\ell}} \left(\int_J |T^y(g_{\tau,\ell} ^\theta)|\right)|J|^{-1/2}|S_k ^x h_J(x+\theta)|\lesssim  2^{-|\ell-k|/2} |(T^y g_{\tau,\ell} ^\theta)|\ast^x \phi_{2^{-k}}.
\]
Thus, averaging over $\theta\in \Theta(x,\ell)$,
\[
|S^x _k T^y f_\tau|\lesssim \sum_{\ell \geq 0}2^{-|\ell-k|/2} \langle  |\phi_{2^{-k}}\ast^x T^y(g_{\tau,\ell} ^\theta)| \rangle_{\Theta(x,\ell)} \lesssim \phi_{2^{-k}}\ast^x \sum_{\ell \geq 0}2^{-|\ell-k|/2} \int_{[-\frac{1}{3}, \frac{1}{3} ]} |T^y(g_{\tau,\ell} ^\theta)| \, \d \theta.
\]
We have proved
\[
 |S ^x _k T^y f_\tau| \lesssim \phi_{2^{-k}}\ast^x \left(\sum_{\ell \geq 0}2^{-|\ell-k|/2} \int_{[-\frac{1}{3}, \frac{1}{3} ]} |T^y(g_{\tau,\ell} ^\theta)| \, \d \theta\right),\qquad k\geq 0.
\]
Applying this with $T_{m_{\tau,I}} ^y$ in place of $T^y$ for $k_I\geq 0$, we have for each $y\in [0,1]^{n-1}$ fixed
\[ 
 |S_{k_I} ^x [(T_{m_{\tau,I}} ^y f_\tau)(\cdot,y)]|\lesssim \phi_{2^{-k_I}}\ast^x F_{\tau,I}(\cdot,y),\qquad F_{\tau,I} \coloneqq \sum_{\ell \geq 0}2^{-|\ell-k_I|/2} \int_{[-\frac{1}{3}, \frac{1}{3} ]} |T_{m_{\tau,I}} ^y(g_{\tau,\ell} ^\theta)| \, \d \theta.
\]
As before, we write
\begin{equation}\label{eq:vvassumption}
\begin{split}
P_I ^x T_{m_{\tau,I}} ^y f_\tau & = P_I ^x (S_I ^x S_{k_I} ^x T_{m_{\tau,I}} ^y f_\tau) \eqqcolon P_I ^x f_{\tau,I},\qquad k_I\geq 0,
\\[1em]
|f_{\tau,I}(\cdot,y)| & \lesssim \omega_{|I|^{-1}}\ast^x \phi_{2^{-k_I}}\ast^x F_{\tau,I}(\cdot,y) \lesssim \widetilde \omega_{|I|^{-1}} \ast^x F_{\tau,I}(\cdot,y),
\end{split}
\end{equation}
since we always have $2^{-k_I}<|I|^{-1}$.

Inserting \eqref{eq:vvassumption} proved above as input in Proposition~\ref{prp:vvaluedTW}, applied in the $x$-variable, we  have
\[
\begin{split} 
& \left|\left\{(x,y)\in Q_1 \times Q_{n-1}:\, \left\|\vec T_> \vec f(x,y)\right\|_{\ell^2 _\tau}>\alpha \right\}\right| \lesssim \frac{1}{\alpha} \int_{Q_{n-1}} \int_\R \left(\sum_\tau N_\tau \sum_{I} |\lambda_{\tau,I} F_{\tau,I}(x,y)|^2\right)^{\frac{1}{2}}\, \d x \, \d y
\\
&\qquad  \lesssim \frac{1}{\alpha} \int_{[-\frac{1}{3}, \frac{1}{3}]} \int_{Q_{n-1} \times \R} \left(\sum_\tau N_\tau \sum_I \left|\lambda_{\tau,I} \sum_{\ell \geq 0}2^{-|\ell-k_I|/2}|T_{m_{\tau,I}} ^y(g_{\tau,\ell} ^\theta)|  \right|^2 \right)^{\frac{1}{2}} \d x \,\d y \, \d \theta
\\
&\qquad  \leq \frac{1}{\alpha} \int_{[-\frac{1}{3}, \frac{1}{3}]}  \int_{Q_{n-1}\times \R} \left(\sum_\tau N_\tau \sum_I |\lambda_{\tau,I}|^2 \left( \sum_{\ell\geq 0} 2^{-|\ell-k_I|/2}\right) \sum_{\ell \geq 0}2^{-|\ell-k_I|/2}|T_{m_{\tau,I}} ^y(g_{\tau,\ell} ^\theta)| ^2  \right)^{\frac{1}{2}} \d x \,\d y \, \d \theta
\\
&\qquad  \lesssim \frac{1}{\alpha} \int_{[-\frac{1}{3}, \frac{1}{3}]} \int_{\R} \int_{Q_{n-1}} \left(\sum_{\tau}N_\tau \sum_I |\lambda_{\tau,I}|^2  \sum_{\ell \geq 0}2^{-|\ell-k_I|/2}|T_{m_{\tau,I}} ^y(g_{\tau,\ell} ^\theta)| ^2  \right)^{\frac{1}{2}} \d y \, \d x\, \d \theta.
\end{split}
\]
Fix $\theta\in[-1/3,1/3]$ and for each $(\tau,I,\ell)$ with $k_I\geq 0$, set
\[
 T_{\tau,I,\ell}\coloneqq T_{m_{\tau,I}} ^y,\qquad h_{\tau,I,\ell}\coloneqq N_\tau ^{\frac{1}{2}}\lambda_{\tau,I} 2^{-|\ell-k_I|/4}g_{\tau,\ell} ^\theta.
\]
Using the inductive assumption~\eqref{eq:induct}, we have
\[
\left\| \left(\sum_{\tau} \sum_{I} \sum_{\ell \ge 0} |T_{\tau,I,\ell}(h_{\tau,I,\ell})|^2\right)^{\frac{1}{2}}\right\|_{L^1(Q_{n-1},\d y)} \lesssim \left\| \left(\sum_{\tau} \sum_{I} \sum_{\ell \ge 0} |h_{\tau,I,\ell}|^2\right)^{\frac{1}{2}}\right\|_{L\log ^{\alpha_{n-1}+1} L (Q_{n-1},\d y)}.
\]
Using the normalization of the coefficients $\{\lambda_{\tau,I}:\, k_I=k\}$, we now estimate
\[
 \left(\sum_{\tau} \sum_I \sum_{\ell \ge 0} | h_{\tau,I,\ell} |^2\right)^{\frac{1}{2}}=\left(\sum_\tau N_\tau \sum_{k\geq 0} \sum_{\ell\geq 0} \sum_{k_I=k} |\lambda_{\tau,I}|^2 2^{-|k-\ell|/2} |g_{\tau,\ell} ^\theta|^2 \right)^{\frac{1}{2}}\lesssim  \left(\sum_{\tau} \sum_{\ell\geq 0} |g_{\tau,\ell} ^\theta|^2\right)^{\frac{1}{2}}.
\]
Combining the estimates above and using Fubini, we get
\begin{multline*}  
\left|\left\{(x,y)\in Q_1 \times Q_{n-1}:\, \|\vec T_> \vec f(x,y)\|_{\ell^2_\tau}>\alpha \right\}\right| \lesssim \\
 \int_{[-\frac{1}{3}, \frac{1}{3} ]} \frac{1}{\alpha}\, \int_{4Q_1} \left\|\left(\sum_{\tau} \sum_{\ell\geq 0} |g_{\tau,\ell} ^\theta|^2\right)^{\frac{1}{2}}\right\|_{L\log ^{\alpha_{n-1}+1}L(Q_{n-1},\d y)} \d x\,\d \theta,
\end{multline*}
since $\supp g_{\tau,\ell} ^\theta \subset 4Q_1$ for each $\tau$ and $\ell\geq 0$. Note that $\alpha_{n-1}+1=\alpha_n-1/2$. Let us call 
\[
F\coloneqq \left( \sum_{\tau} \sum_{ \ell\geq 0} |g_{\tau,\ell} ^\theta|^2\right)^{\frac{1}{2}}
\]
in order to simplify notation. Using (ii) of Proposition~\ref{prp:normequiv} together with \eqref{eq:weaksfvv}, we estimate as follows
\[
\begin{split} 
 &\int_{4Q_1}\left\|\left(\sum_{\tau} \sum_{\ell \geq 0} |g_{\tau, \ell} ^\theta|^2\right)^{\frac{1}{2}}\right\|_{L\log ^{\alpha_{n-1}+1}L(Q_{n-1},\d y)}  \lesssim \int_{4Q_1} \left(1+\int_{Q_{n-1}} \Phi_{\alpha_n-1/2}(F) \,\d y\right) \, \d x 
\\
&\qquad \lesssim 1+ \int_{4Q}\Phi_{\alpha_n-1/2}(|F|) \,\d x\, \d y   \lesssim 1+ \Phi_{\alpha_n-1/2}\left(\|F\|_{L\log^{\alpha_n-1/2}L(Q)}\right) 
\\
&\qquad \lesssim  1+ \Phi_{\alpha_n-\frac{1}{2}}\left( \left\| \|\vec f^\theta \|_{\ell^2 _\tau} \right\|_{L\log ^{\alpha_n}L(Q)}\right)  \leq 1 +\Phi_{\alpha_n-\frac{1}{2}}(1)
\end{split}
\]
by the normalization~\eqref{eq:normal}, which is preserved under the translation defining $\vec f^\theta$. Integrating in $\theta\in[-1/3,1/3]$ yields
\[
\left|\left\{(x,y)\in Q:\, \left\|\vec T_> \vec f\right\|_{\ell^2 _\tau}>\alpha \right\}\right|
\lesssim
\int_{[-\frac{1}{3},\frac{1}{3}]}
\frac{1}{\alpha}
\left(1+\Phi_{\alpha_n-\frac{1}{2}}(1)\right)\,\d \theta
\simeq \frac{1}{\alpha}.
\]
This completes the proof for $\vec T_>$ and with that, the proof of the theorem.
\end{proof}

\section{Proof of Theorem~\ref{thm:Lp_growth}}\label{sec:Proof_growth}  Let $n \in \N$. The case $n=1$ follows either by Theorem~\ref{thm:vvaluedR_2} and interpolation, or by the argument that follows, specialized to the case $n=1$.  For $n>1$ we use induction. As in Definition~\ref{def:R2n_atoms_norm} we write $\vec T$ as a convex combination of atomic operators of the form
\[
 \vec T: \{f_\tau\}_\tau \mapsto \sum_{I\in\mathcal I_\tau }   \lambda_{\tau,I} P_I ^x T^y _{m_{\tau,I}} f_\tau,
\]
where each $\mathcal I_\tau$ is a pairwise disjoint collection of intervals and 
\[ 
\sup_{k\in\Z} \sup_\tau \sum_{I\in\mathcal I_\tau:\, k_I=k}|\lambda_{\tau,I}|^2 \leq 1,
\]
and each $T_{m_{\tau,I}}$ is an $\ell^2 _\tau$-valued $\mathcal R_{2,n-1}$--multiplier, uniformly in $I$. It will suffice to prove the $L^p$-norm bound for each such atom.  

Given a collection $\mathcal I$, consisting of pairwise disjoint intervals on the real line, Rubio de Francia has proved in \cite{RdF1985} the square function estimate
\[
\left\|\left(\sum_I|P_If|^2\right)^{\frac{1}{2}}\right\|_{L^p(\R)}\lesssim \|f\|_{L^p(\R)},\qquad 2\leq p <\infty.
\]
The inverse Rubio de Francia square function estimate with bounds independent of $p \in (1,2)$ was proved by Bourgain on the torus, \cite{bourgainLP} (see \cite{Xu_22} for the Euclidean case), as a consequence of his extension of Rubio de Francia's inequality to $p=1$ \cite{bourg_sq}, while Kislyakov and Parilov \cite{KislPar} extended Bourgain's result \cite{bourg_sq} to the range $0<p<1$. It follows from the work of Kislyakov and Parilov \cite{KislPar}, cf. \cite{KislPar}*{p.~6419} and \cite{KislMarc}*{Proposition 2}, that the following vector-valued version of the inverse Rubio de Francia square function estimate holds
\[
\left\|\left (\sum_\tau\left|\sum_{I\in\mathcal I_\tau }P_I f_\tau \right|^2\right)^{\frac{1}{2}}\right\|_{L^p(\R)}\lesssim \left\|\left( \sum_{\tau} \sum_{I \in \mathcal{I}_{\tau}} |P_I f_{\tau} |^2\right)^{\frac{1}{2}}\right\|_{L^p(\R)},\qquad 0<p \leq 2,
\] 
where, crucially, the implicit constant does not depend on $p$. Applying this estimate to the collections $\{ \mathcal I_\tau\}_\tau$ given in the definition of an $\mathcal R_{2,n}$--atom, we have
\[
\left\| \vec T (\{f_\tau\}_\tau) \right\|_{L^p(\R^n;\ell^2 _\tau)}\lesssim \left\| \left(\sum_{\tau} \sum_{I\in\mathcal I_\tau} |\lambda_{\tau,I}P_I ^x T_{m_{\tau,I}} ^y f_\tau |^2\right)^{\frac{1}{2}}\right\|_{L^p(\R^n)}.
\]
The right-hand side of the display above is bounded by the inductive assumption by a constant multiple of
\[
\max(p,p') ^{\beta_{n-1}}\left\| \left(\sum_{\tau} \sum_{I\in\mathcal I_\tau} |\lambda_{\tau,I} P_I ^x f_\tau |^2\right)^{\frac{1}{2}}\right\|_{L^p(\R^n)}.
\]

Let $\{\varepsilon_{\tau,I}\}$ be independent Rademacher signs. By the Hilbert space Khintchine inequality, see e.g. \cite{Pisier_MiBS}*{p. 154}, we have for every $0<p\leq 2$,
\[
\left\| \left(\sum_{\tau} \sum_{I\in\mathcal I_\tau} |\lambda_{\tau,I} P_I ^x f_\tau |^2\right)^{\frac{1}{2}}\right\|_{L^p(\R,\, \d x)} \simeq 
\left(\mathbb E_\varepsilon \left\|\left(\sum_{\tau}\left|\sum_{I\in\mathcal I_\tau}\varepsilon_{\tau,I}\lambda_{\tau,I} P_I ^x f_\tau\right|^2\right)^{1/2}\right\|_{L^p(\R,\d x)}^p\right)^{1/p};
\]
we stress that the implicit constants are independent of $p$ in the range considered here.
Note that for each fixed sign-choice and almost every fixed $y\in \R^{n-1}$, the operator 
\[
\{f_\tau(\cdot,y)\}_\tau \mapsto \sum_{I\in\mathcal I_\tau}\varepsilon_{\tau,I}\lambda_{\tau,I} P_I f_\tau(\cdot,y)
\]
is a vector-valued $\mathcal R_{2,1}$--multiplier, hence by the case $n=1$ of the theorem it has $L^p$-bound $\sim \max(p,p')^{\frac{3}{2}}$. We have proved
\[
\beta_n \le \beta_{n-1} +{\frac{3}{2}}
\]
and hence the proof is complete.

\bibliography{marcbib}{}
\bibliographystyle{amsra}

\end{document}